\newtheorem{theorem}{Theorem}
\newtheorem{lemma}{Lemma}
\newtheorem{remark}{Remark}
\newtheorem{example}{Example} 
\newtheorem{assumption}{Assumption} 
\newcolumntype{L}{>{\centering\arraybackslash}m{3cm}}
\def\R{\mathbb{R}}
\def\Z{\mathbb{Z}}
\def\P{{\mathbb P}}     
\def\E{{\mathbb E}}  
\def\eqd{\,{\buildrel d \over =}\,}  
\def\<{{\langle}} 
\def\>{{\rangle}} 
\DeclareMathOperator*{\esssup}{ess\,sup}
\newcommand{\vertiii}[1]{{\left\vert\kern-0.25ex\left\vert\kern-0.25ex\left\vert #1 
    \right\vert\kern-0.25ex\right\vert\kern-0.25ex\right\vert}}
\newcommand{\D}[2]{\frac{d#1}{d#2}}
\newcommand{\PD}[2]{\frac{\partial#1}{\partial#2}}
\newcommand{\PDD}[3]{\frac{\partial^{#1}{#2}}{\partial{#3}^{#1}}}
\title{Strong convergence with error estimates for a stochastic compartmental model of electrophysiology 
}
\date{\today}
\newsavebox{\MONDAY}\savebox{\MONDAY}{Mon}
\newcommand{\week}[1]{%
  \paragraph*{\kern-2ex\quad #1, \syldate{\today} - \AdvanceDate[4]\syldate{\today}:}
  \ifdim\wd1=\wd\MONDAY
    \AdvanceDate[7]
  \else
    \AdvanceDate[7]
  \fi%
}
\definecolor{clemsonorange}{HTML}{EA6A20}
\begin{document}

\title[Article Title]{Strong convergence with error estimates for a stochastic compartmental model of electrophysiology 
}


\author[1]{\fnm{Wai-Tong Louis} \sur{Fan}}
\author*[2]{\fnm{Joshua A.} \sur{ McGinnis }}\email{jam887@sas.upenn.edu}
\author[2,3]{\fnm{Yoichiro} \sur{ Mori}}

\affil[1]{School of Data Science and Society at University, North Carolina-Chapel Hill, NC, 27599, USA}
\affil[2]{Department of Mathematics, University of Pennsylvania, Philadelphia, PA, 19104, USA}
\affil[3]{Department of Biology, University of Pennsylvania, Philadelphia, PA, 19104, USA}


\abstract{
This paper presents a rigorous mathematical analysis, alongside simulation studies, of a spatially extended stochastic electrophysiology model, the Hodgkin-Huxley model of the squid giant axon being a classical example. Although most studies in electrophysiology do not account for stochasticity, it is well known that ion channels regulating membrane voltage open and close randomly due to thermal fluctuations. 

We introduce a spatially extended compartmental model in which this stochastic behavior is captured through a piecewise-deterministic Markov process (PDMP). Space is discretized into $n$ compartments each of which has at most one ion channel. We also devise a numerical method to simulate this stochastic model and illustrate the numerical method by simulation studies.

We show that a classical system of partial differential equations (PDEs) approximates the stochastic system as $n\to \infty$. Unlike existing results, which focus on weak convergence or convergence in probability, we establish an almost sure convergence result with a precise error bound of order $n^{-1/3}$. Our findings broaden the current understanding of stochastic effects in spatially structured neuronal models and have potential applications in studying random ion channel configurations in neurobiology. Additionally, our proof leverages ideas from homogenization theory in PDEs and can potentially be applied to other PDMPs or accommodate other ion channel distributions with random spacing or defects.
}

\keywords{Hodgkin-Huxley model, stochastic ion channels, Piecewise-deterministic Markov process, homogenization theory, numerical methods
}
\maketitle
 
\section{Introduction}
\subsection{Model Derivation}

Electrophysiological phenomena are governed by ion channels embedded in the cell membrane that open and close in response to different cues including the membrane voltage. Most models of cellular and tissue electrical activity consist of differential equations satisfied by membrane voltage and gating variables that describe the opening and closing of ion channels \citep{keener2009mathematical}. This widely successful deterministic description rests on the assumption that the number of ion channels is so large that a population level description of ion channel dynamics is adequate. When the number of ion channels is small, however, stochastic effects of opening and closing of individual channels can play an important role. This may be particularly significant in small cells, especially neurons in the central nervous system whose ion channel distribution can be sparse.

Deterministic models of electrophysiology can be classified into those with or without spatial extent. 
Let us consider a model without spatial extent. For definiteness, we take the Hodgkin-Huxley model as an example:
\begin{align*}
    C_{\rm cap}\D{v}{t} =& -g_{Na} m^3 h (v-E_{\rm Na}) -g_{\rm K} n^4 (v-E_{K}) -g_L (v-E_L)\\
    \D{s}{t}=& \alpha_s(v) (1-s) - \beta_s(v) s, \quad s\in\{m,n, h\},
\end{align*}
Here, $v$ is the membrane voltage and $m,n,h$ are known as gating variables taking values between $0$ and $1$.
The first equation is a statement of current conservation across the cell membrane. The left hand side is the capacitive current where 
$C_{\rm cap}$ is the cell membrane capacitance. The right hand side is a sum of three terms, the Na$^+$, K$^+$ and leak currents which pass through transmembrane ion channels. 
The Na$^+$ conductance is given by $g_{\rm Na}m^3h$ where $g_{\rm Na}$ is the maximal Na$^+$ conductance and the product $m^3h$ is the proportion of ion channels that are open.
Opening of the Na$^+$ channel can thus be thought of as controlled by three $m$ gates and one $h$ gate. The membrane current itself is proportional to the difference between the membrane 
voltage $v$ and the equilibrium voltage $E_{\rm Na}$ for Na$^+$. Similar considerations apply for the K$^+$ channel. In the Hodgkin-Huxley model, 
the ionic current carrier of the leak current is not specified and the leak channel is always assumed to be open. The dynamics of the gating variable $s=m,n,h$ is governed by a differential equation
whose rates are controlled by voltage $v$.

When the above equations were proposed by Hodgkin and Huxley, the existence of ion channels was not known. Subsequent developments, especially the patch-clamp technique, showed
that individual channels open and close stochastically due to thermal fluctuations. The differential equations for the gating variables can thus be interpreted as generating a continuous-time Markov process.
To simplify the presentation, consider the following toy model with voltage $v$ and a single gating variable $z$:
\begin{equation}\label{simpleprob}
C_{\rm cap}\D{v}{t} = -gz(v-E), \quad \D{z}{t}= \alpha(v) (1-z) - \beta(v) z.
\end{equation}
We introduce the stochastic variables $V$ and $Z^{(k)}, k=1,\cdots n$ where $V$ satisfies the equation:
\begin{equation}\label{simpleprobstoch}
C_{\rm cap}\D{V}{t}=-\sum_{k=1}^n \frac{g}{n}Z^{(k)}(V-E)
\end{equation}
and $Z^{(k)}$ is a continuous time Markov process taking values $0$ or $1$ with the transition rate from $0$ to $1$ given by $\alpha(V)$ and $1$ to $0$ given by $\beta(V)$.
The above is an example of a stochastic hybrid system, in which the deterministic equation for $V$ is coupled to a stochastic process. More specifically, it is an example of a piecewise deterministic Markov process (PDMP) which already have various applications in cell biology \citep{bressloff2021stochastic,bressloff_Maclaurin}. 
It is straight forward to write down a similar stochastic model for the Hodgkin-Huxley model.
When $n$ is large, it is expected that the dynamics of the stochastic hybrid system will approach that of the deterministic equation \eqref{simpleprob}.
This is indeed the case and is well-established. Moreover, the central limit theorem is well understood \citep{FoxLu}. Using the Langevin approximation, the authors \citep{Thomas_Pu} were able to develop an efficient simulation algorithm for the full Hodgkin-Huxley model using stochastic shielding. It is even possible to study questions of large deviations in simpler PDMP models such as a PDMP version of  Morris-Lecar in the setting of equation \eqref{simpleprobstoch} \citep{Newby-Keener}, \citep{Newby}. 

The canonical electrophysiological model with spatial extent is the cable model used to describe the propagation of an action potential along an axon. 
The axon is treated as a one-dimensional cable, whose spatial coordinate we denote by $x$. In the context of the the above toy model, the voltage $v(x,t)$ and gating variables $z(x,t)$ satisfy the following equations:
\begin{equation}\label{simple_cable}
C_{\rm cap}\D{v}{t}=D\PDD{2}{v}{x}-gz(v-E), \quad  \PD{z}{t}=\alpha(v) (1-z) - \beta(v) z,
\end{equation}
where $D$ is the axial conductivity of the axon. Likewise, for the Hodgkin-Huxley model, we have:
\begin{align}\label{HH_cable}
    \partial_t v(t,x) =& D\Delta v -g_{Na}(x) m^3 h (v-E_{Na+}) -g_{K}(x) n^4 (v-E_{K+}) -g_L(x) (v-E_L)\\
    \partial_t s=& \alpha_s(v) (1-s) - \beta_s(v) s, \quad s\in\{m,n, h\},
\end{align}
The equation for $v$ can be derived using a current conservation argument applied to the one-dimensional axon. 
Note that the equation for the gating variables have no spatial coupling. 
It is only through the voltage $v$ that the gating variables are spatially coupled.

In this paper, we consider the following stochastic version of the cable model.
Divide the one-dimensional neuron into compartments of length $h$ and assume that the voltage in each compartment is approximately uniform.
Each compartment contains ion channels. Let $V^{(k)}$ be the voltage of the $k$-th compartment, 
and $Z^{(k)}$ be the stochastic gating variable for the ion channel in that compartment. The stochastic version of \eqref{simple_cable} will be:
\begin{equation}\label{comp_simple}
C_{\rm cap}\D{V^{(k)}}{t}=D\frac{V^{(k+1)}-2V^{(k)}+V^{(k-1)}}{h^2}-gZ^{(k)}(V^{(k)}-E)
\end{equation}
where $Z^{(k)}$  is a continuous time Markov process taking values $0$ or $1$ with the transition rate from $0$ to $1$ given by $\alpha(V^{(k)})$ and $1$ to $0$ given by $\beta(V^{(k)})$. 
In the above example, each compartment contains one ion channel each.
In the case of the Hodgkin-Huxley model, we may set:
\begin{align}
\label{E:HH1}
C_{\text{cap}}\dfrac{dV^{(k)}}{dt}&=D\dfrac{(V^{(k+1)}-2V^{(k)}+V^{(k-1)})}{h^2}+I^{(k)}_\text{ion}(V^{(k)}),\\
\label{E:HH2}
I^{(k)}_\text{ion}&=\,
-g^{(k)}_{\rm Na}Y^{(k)} (V^{(k)}-E_{\rm Na+})
-g^{(k)}_{\rm K}Z^{(k)} (V^{(k)}-E_{\rm K+})
-g^{(k)}_{\rm L} (V^{(k)}-E_{\rm L}).
\end{align}
where
\begin{equation}\label{E:HH3}
Y^{(k)}= M^{(k,1)}M^{(k,2)}M^{(k,3)}\,H^{(k)} 
\quad\text{and}\quad
Z^{(k)}= N^{(k,1)}N^{(k,2)}N^{(k,3)}\,N^{(k,4)}.
\end{equation}
For gating variable type $i\in\{M, N, H\}$, the rates for the two-state continuous-time Markov processes are $\{\alpha_{i}(V^{(k)}),\beta_{i}(V^{(k)})\}$. 
In this case, each compartment will have three ion channels corresponding to Na, K and leak channels.

As we shall see in Section \ref{modelformulation}, it is possible to restrict each compartment to have at most one ion channel by 
introducing a multinomial random variable at each location $k$. In the Hodgkin-Huxley model above, the multinomial random variable may have 
four states indicating the presence of a Na channel, a K channel, a leak channel or the absence of a channel. 
The multinomial distribution from which this random variable is drawn can depend on the position $(k)$, making it possible to model 
a spatially dependent ion channel density. 
This allows us to interpret the compartmental length $h$ as the exclusion length between two ion channels.

The main objective of our paper is to consider the limit as $h\to 0$ in the above stochastic model. 
We show that the stochastic model converges to the deterministic model and we quantify the rate of convergence.
We also provide numerical verification for our error estimates.

It is useful to compare our model to previous stochastic cable models that have been studied in the literature. In \citep{Faisal2005, Faisal2007}, the authors consider a similar compartmental model except that there are $N$ ion channels in each of the compartments. Assuming $N$ is large enough, a further approximation is made so that the discretely valued stochastic process of the opening and closing of ion channels is replaced by brownian motion. This facilitates the numerical study of stochastic action potential propagation, which is the subject of \citep{Faisal2005, Faisal2007}. In terms of its spatial resolution, our model is finer than these models in the sense that our compartments can be made to contain at most one channel, and we do not make the brownian motion approximation.

In \citep{austin2008emergence,riedler2012limit}, ion channels are treated as discrete points $x_k$ with spacing $h$ on the one dimensional line, while retaining the second derivative in the cable model. 
The model corresponding to \eqref{simple_cable} will be:
\begin{equation}\label{PDEdelta}
C_{\rm cap}\PD{v}{t}=D\PDD{2}{v}{x}-\sum_{k}gZ^{(k)}\delta(x-x_k)(v-E),
\end{equation}
where $Z^{(k)}$ is a continuous-time Markov process taking values $0$ or $1$ with the transition rate from $0$ to $1$ given by $\alpha(v(x_k))$ and $1$ to $0$ given by $\beta(v(x_k))$.

In contrast to our model, model \eqref{PDEdelta} treats ion channels as a genuine point. In fact, each ion channel has a physical size $\delta$ in the tens of nanometers range and
each ion channel generates a three-dimensional current distribution. Furthermore, the axon is a three-dimensional object with a diameter $d$ which can be as small as $100$nm and possibly thinner for dendritic structures \citep{hudspeth2013principles}. 
Length scales below $d$ or $\delta$ cannot be resolved by a spatially one-dimensional model.
If we let the distance between ion channels $h$ to be of order to $d$ or $\delta$, the biophysical faithfulness
of \eqref{PDEdelta} and \eqref{comp_simple} are thus comparable. Typical values of $h$ vary greatly but can be on the order of $100$ nm \citep{sato2019stochastic} making $h$ comparable to $d$ or $\delta$. Equation \eqref{PDEdelta} can be seen as the limiting case in which the spacing between ion channels
is sufficiently large with respect to $d$ or $\delta$. 
One advantage of our model is that an effective computational method can be devised to simulate the model. 
This is the subject of Section \ref{S:simulation}. On the other hand, it seems difficult to devise an accurate numerical method for \eqref{PDEdelta}, especially near the 
delta function singularities where the ion channels are located. We also point out that our model naturally accommodates spatially random distribution 
of channels by having compartments with no channels. Our analytical results encompass such cases.

The contributions of this paper include the followings. First, we introduce a new stochastic model for voltage-gated ion channels that offers an alternative to existing approaches, such as those in \citep{austin2008emergence}, \citep{buckwar2011exact}, \citep{riedler2012limit}, and \citep{riedler2015spatio}. Our model is more natural from the perspective of numerical simulation and encompasses several interesting examples, which we discuss in Section \ref{SS:Examples}. We explain specific strategies for simulation in Section \ref{S:simulation}.

Second, our main result (Theorem \ref{T:Main} in Section \ref{S:Result})  establishes a strong convergence result of our stochastic model as $n \to \infty$ ($h\to 0$). It not only gives a stronger notion of convergence than the functional laws of large numbers in existing work, but also offers an explicit error bound; see our explanation after Theorem \ref{T:Main}.

Third, our proof method in Section \ref{S:Proof} employs a multi-scale ansatz to define a "corrector" term that can be bounded in terms of $h=L/n$
almost surely, with this term tending to zero as $n\to\infty$. This approach is similar to the method used in \citep{wright2022} and \citep{mcginnis2023}, introducing a homogenization technique to the study of piecewise-deterministic Markov processes (PDMPs) for the first time.

\subsection{A compartmental model for voltage-gated ion channels}\label{modelformulation}

In this section, we describe our general model and explain how it covers  a few important examples including \eqref{E:HH1}-\eqref{E:HH3}.

Suppose the ion channels are located on a circle $\mathbb{S}$ with total length $L$. We note here that our assumption of a circular axon is to avoid considerations of boundary effects. For sealed axons (no current boundary conditions at both ends), our circular results can be applied directly by simply considering a circle made of two identical copies of sealed axons.
Fix a positive integer $n\in\mathbb{N}$ and divide the circle  into $n$ compartments of equal length $h=h_n=L/n$. Under this discretization, there are $n$ points $\mathbb{S}_n:=\{kh\}_{k=0}^{n-1}\subset \mathbb{S}$. 
So, in the discrete-space model, locations in space are indexed by $k$ and correspond to $kh\in \mathbb{S}_n\subset \mathbb{S}$ in the macroscopic model.

\medskip
\noindent
{\bf Stochastic models. } 
For the $k$-th compartment at time $t$, the voltage and the states of the ion channels  will be represented  by
$V^{(k)}(t)$ and $\{Z^{(k)}_{i,j}(t)\}_{1\leq i\leq I,\,1\leq j\leq J}$ respectively, where $I,\,J\in\mathbb{N}$ are fixed throughout this paper. Roughly,
\[
Z^{(k)}_{i,j}(t)=
\begin{cases}
1,& \quad \text{if the channel of type }i \text{ is in configuration }j \text{ (e.g. open)}\\
0,& \quad \text{if the channel of type }i \text{ is not in configuration }j \text{ (e.g. closed)}
\end{cases}
\]
Precisely, we  consider a sequence of continuous-time Markov processes, indexed by $n$,  that is parameterized by two collections  $\{g_{i,j}:\;1\leq i\leq I,\,1\leq j\leq J \}$ and 
$\{A_{i,(a,b)}:\;1\leq i\leq I,\,1\leq a,b\leq J,\,a\neq b \}$ of deterministic functions on $\R$. 
For each $n\in \mathbb{N}$, such a  process $$(V,Z):=\left( V^{(k)},\,\{Z^{(k)}_{i,j}\}_{1\leq i\leq I,\,1\leq j\leq J}\right)_{k=0}^{n-1}$$
is described in \eqref{eq:general_model V} and \eqref{eq:general_model Z} below. 

\FloatBarrier
\begin{figure}
    \centering
    \includegraphics[scale=0.3]{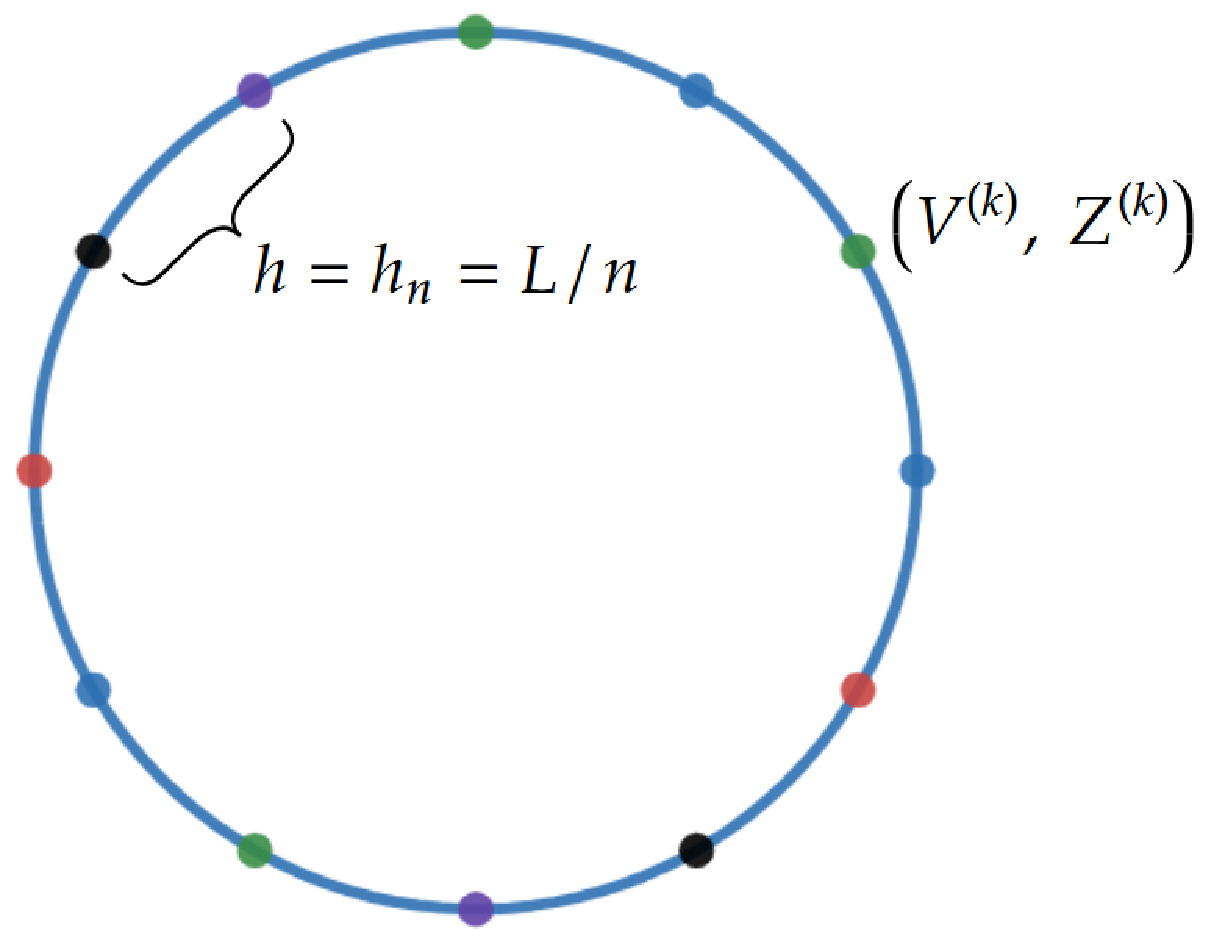}
    \caption{A discretized circle $\mathbb{S}_n:=\{kh\}_{k=0}^{n-1}$ with $n=12$, which consists of $n$ points that divide the circle $\mathbb{S}$ into $n$ pieces of length $h=h_n=L/n$. At each location $kh$, the voltage $V^{(k)}$ and the states of the channels $Z^{(k)}=\{Z^{(k)}_{i,j}\}_{1\leq i\leq I,\,1\leq j\leq J}$ are described by equations
    \eqref{eq:general_model V} and \eqref{eq:general_model Z} respectively
    }
    
\end{figure}

For each $k\in\{0,1,\ldots,n-1\}$, we consider the equation
\begin{align}
\dot{V}^{(k)}(t)= D\dfrac{V^{(k+1)}(t)-2V^{(k)}(t)+V^{(k-1)}(t)}{h^2}+\sum_{i=1}^{I}\sum_{j=1}^{J}Z^{(k)}_{i,j}(t)\,g_{i,j}(V^{(k)}(t)),
\label{eq:general_model V}
\end{align}
and, for  $i\in\{1,2,\ldots, I\}$ and $j\in\{1,2,\ldots, J\}$, the equation
\begin{align}\label{eq:general_model Z}
Z^{(k)}_{i,j}(t) = Z^{(k)}_{i,j}(0)\,&+\sum_{a\in \{1,\ldots,J\}:\,a \neq j}Y^{(k)}_{i,(a,j)}\left(\int_0^tA_{i,(a,j)}(V^{(k)}(s))\,Z^{(k)}_{i,a}(s)\,ds\right) \notag\\
&-\sum_{a\in \{1,\ldots,J\}:\,a \neq j}Y^{(k)}_{i,(j,a)}\left(\int_0^tA_{i,(j,a)}(V^{(k)}(s))\,Z^{(k)}_{i,j}(s)\,ds\right)
\end{align}
for $t\in (0,\infty)$,
where
$\{Y^{(k)}_{i,(a,b)}\}$ are independent unit rate Poisson processes. 

The index $k$ specifies the spatial location, as before, and the index $i$ specifies different types of ion channels. For fixed $i$ and $k$, one may think of index $j$ as specifying the configuration of the ion channel of type $i$ at location $k$. A specific way to assign configurations to an ion channel is by assigning a configuration to each combination of its gates' states i.e. being open or closed. Naturally then our model makes sense only if an ion channel can only be in one configuration at once. In \eqref{eq:general_model Z}, ones sees that for any fixed $i\in\{1,...,I\}$ and $j\in\{1,...,J\}$ and at any time $t$, if $Z^{(k)}_{i,j}(t)=1$ and if for all $a \in \{1,...,J\} $ s.t $a \neq j$, $Z^{(k)}_{i,a}(t)=0$, then $Z^{(k)}_{i,j}(t)$ must almost surely decrease to $0$ before it increases. Likewise, if $Z^{(k)}_{i,a}(t)=0$ except for just one $a_0 \neq j$ so that $Z^{(k)}_{i,a_0}(t)=1$, then $Z^{(k)}_{i,j}(t)=0$, and it must almost surely increase to $1$, before it decreases.

\begin{assumption}\label{A:functions g and A}
Suppose  $\{g_{i,j}:\;1\leq i\leq I,\,1\leq j\leq J \}$ and 
$\{A_{i,(a,b)}:\;1\leq i\leq I,\,1\leq a,b\leq J,\,a\neq b \}$ are deterministic functions on $\R$ such that 
\begin{itemize}
    \item $g_{i,j}:\,\R\to\R$ is globally Lipschitz continuous for all $1\leq i\leq I,\,1\leq j\leq J$. 
    
    \item $A_{i,(a,b)}:\,\R\to\R_+$ is a non-negative Borel-measurable function that is bounded on any compact interval, whenever $a\neq b$ and $1\leq i\leq I,\,1\leq a,b\leq J$. 
   
\end{itemize}
\end{assumption}

Given $(V^{(k)}(t))_{t\in\R_+}$ (i.e. suppose we know the voltage at location $k$ for all time), the vector
$Z^{(k)}_{i,\cdot}:=(Z^{(k)}_{i,j})_{j=1}^{J}$ would then be a time in-homogeneous continuous-time Markov process for each  $i\in \{1,2,\ldots,I\}$, with transition rates given by the functions $\{s\mapsto A_{i,(a,b)}(V^{(k)}(s))\}_{a\neq b}$. Under our initial conditions discussed further below in Assumption \ref{A:initial condtions}, $Z^{(k)}_{i,j}(t)$ is either 0 or 1 (i.e. takes value in $\{0,1\}$) for all time $t\in\R_+$ and all $(i,j)$.

Let  $\textbf{A}_{i}(\cdot)$ be the corresponding 
$J\times J$ rate matrix. That is, the off-diagonal entries are $A_{i,(a,b)}(\cdot)$ for $a\neq b$, and the diagonal entries are defined so that the row sums are zero. The forward Kolmogorov's equation for $Z^{(k)}_{i,\cdot}$, given $(V^{(k)}(t))_{t\in\R_+}$, can be written as 
$$\dfrac{d}{dt}\textbf{P}^{(k)}_i(t) =\textbf{P}^{(k)}_i(t)\, \textbf{A}_{i}(V^{(k)}(t)), $$
where $\textbf{P}^{(k)}_i(t)=(p^{(k)}_i(t,z,x))_{z,x}$ is the transition matrix for the continuous-time Markov chain $Z_{i,\cdot}^{(k)}$, given $(V^{(k)}(t))_{t\in\R_+}$.

\begin{remark}[Related models]\rm
The model  \eqref{eq:general_model V}-\eqref{eq:general_model Z} is general enough to incorporate  products of independent gates, such as those in the stochastic Hodgkin-Huxley model \eqref{E:HH1}-\eqref{E:HH3}. 
When $g_{i,j}$ are all linear functions for all $(i,j)$ and $A_{i,(a,b)}$ are all smooth and take values between two fixed positive constants, a similar model was considered in \citep{austin2008emergence} in which
the potential $V$ is a continuous function on an interval and is subject to Dirichlet boundary condition. A major difference is that in \citep{austin2008emergence} the voltage is a continuous function in space but here in  \eqref{eq:general_model V}-\eqref{eq:general_model Z} we consider the voltages $\{V^{(k)}\}$ in discrete compartments.
Both our model here and that in  \citep{austin2008emergence} lie in the framework of piecewise-deterministic Markov processes; see \citep{buckwar2011exact,riedler2012limit,riedler2015spatio}.    
\end{remark}

\medskip
\noindent
{\bf PDE models. } 
Later, we shall let $n\to\infty$ (equivalently $h=h_n\to 0$) while fixing $I$ and $J$.
The result of this paper, loosely stated, is that as $h \to 0$, the dynamics $(V,Z)$ are well approximated by those of $(v,z)$ for finite times where $(v,z)$ solves the following PDE: 
\begin{align}
\dfrac{\partial v}{\partial t}(t,x)=&\, D\Delta v+\sum_{i=1}^{I}\sum_{j=1}^{J}z_{i,j}(t,x)g_{i,j}(v(t,x)) \label{eq:general_PDE1}
\\
\dfrac{\partial z_{i,\cdot}}{\partial t}(t,x)=&\,A_i(v(t,x))^Tz_{\ell,\cdot}(t,x),\quad 1\leq i\leq I.
\label{eq:general_PDE2}
\end{align}

We represent a point in $\mathbb{S}$ by a principle angular variable $\theta\in [0,\,L)$, and endow $\mathbb{S}$ with the Lebesgue measure of the interval $[0,\,L)$. 
The transition
probability density of a standard Brownian motion $B$ on $\mathbb{S}$ with respect to such  measure is explicitly given by 
\begin{equation}\label{densityBM_S}
 p^{\mathbb{S}}(t,x,y)=\frac{1}{\sqrt{2\pi t}} \sum_{k\in\Z} e^{\frac{-(y-x+L k)^2}{2t}}.
\end{equation}
The transition density for a Brownian motion with variance $2D$ is $p^{\mathbb{S}}(2D t,x,y)$. 
We shall interpret \eqref{eq:general_PDE1} via Duhammel's formula as the integral equation
\begin{align}
    v(t,x)=e^{D \Delta t}v(0,\cdot)(x)+\int_0^{t}e^{D \Delta (t-s)}\left(\sum_{i=1}^{I}\sum_{j=1}^{J}z_{i,j}(s,\cdot)g_{i,j}(v(s,\cdot)) \right)(x)\,ds,
    \label{eq:Duhmlv}
\end{align}
where we define the fundamental operator acting on functions $\phi \in L^{\infty}(\mathbb{S})$ as
\begin{align}
e^{D \Delta t}\phi(x) := \int_{\mathbb{S}}p^{\mathbb{S}}(2Dt,x,y)\phi(y)\,dy,\qquad (t,x)\in \R_+\times \mathbb{S}.
\end{align}
Equation \eqref{eq:general_PDE2} can be more explicitly written as
\begin{equation}
\begin{aligned}
\label{eq:general_PDE2z}
z_{i,j}(t,x)= z_{i,j}(0,x)\,+\sum_{a \in\{1,\ldots,J\}: a \neq j} \int_0^tA_{i,(a,j)}(v(s,x))z_{i,a}(s,x)ds
\\
-\sum_{a \in\{1,\ldots,J\}: a \neq j}\int_0^tA_{i,(j,a)}(v(s,x))z_{i,j}(s,x)ds,  
\end{aligned}
\end{equation}
where we used the definition  $A_{i,(a,a)}:=-\sum_{j \in \{1,\ldots,J\}:j\neq a}A_{i,(a,j)}$. Equation \eqref{eq:general_PDE2z} should be compared with \eqref{eq:general_model Z}.

Before describing some examples, we address existence and uniqueness of solutions.
\begin{assumption}[Initial condition for PDE \eqref{eq:general_PDE1}-\eqref{eq:general_PDE2}] \label{A:initial condtions_PDE}
Suppose  $v_0\in \mathcal{C}(\mathbb{S})$ and
 $\{(z_ {i,j})_0\}_{1 \leq i \leq I,\,1 \leq j \leq J} \subset \mathcal{C}(\mathbb{S};\,[0,1])$ are such that 
\begin{equation}\label{E:initial condtions_PDE}
 \sum_{j=1}^{J}(z_{ i,j})_0(x)=1 \quad\text{for all } i\in\{1,2,\cdots, I\} \text{ and } x\in\mathbb{S}.
\end{equation}
\end{assumption}

\begin{lemma}[Existence and uniqueness for PDE]\label{L:existencePDE}
Suppose Assumptions \ref{A:functions g and A} and \ref{A:initial condtions_PDE} hold.  Then \eqref{eq:general_PDE1}-\eqref{eq:general_PDE2} has a unique solution in the following sense:
there exists a unique element $(v,z)$  in $\mathcal{C}(\R_+\times \mathbb{S})\times \mathcal{C}(\R_+\times \mathbb{S};\,[0,1])^{IJ}$ 
that satisfies equation \eqref{eq:Duhmlv} and equations \eqref{eq:general_PDE2z} for all $(t,x)\in \R_+\times \mathbb{S}$.
Furthermore, $v$ and $z:=\left(z_{i,j}:\;1\leq i\leq I,\,1\leq j\leq J\right)$ satisfy the followings:
\begin{itemize}
\item[(i)]  there exists a constant $C\in(0,\infty)$ such that for all $T\in(0,\infty)$, 
\begin{equation}
    \|v \|_{L^\infty([0,T]\times \mathbb{S})} \leq C e^{C T}.
    \label{inq_v}
\end{equation}
\item[(ii)] for all time $t\in\R_+$,
\begin{equation}
 \sum_{j=1}^{J}z_{ i,j}(t,x)=1 \quad\text{for all } i\in\{1,2,\cdots, I\} \text{ and } x\in\mathbb{S}.
\end{equation}
\end{itemize}
Suppose, furthermore, for all indexes $1 \leq i \leq I,\,1\leq j,a,b \leq J$,
\begin{equation}\label{A:regularity}
g_{i,j}\in\mathcal{C}^{1,1}(\mathbb{R}),\;A_{i,(a,b)}\in \mathcal{C}^{1}(\mathbb{R}),\;(z_ {i,j})_0\in  \mathcal{C}^1(\mathbb{S};\,[0,1]),\;v_0\in \mathcal{C}^2(\mathbb{S}).  
\end{equation}
Then  $(v,z)\in \mathcal{C}^2(\R_+\times \mathbb{S})\times \mathcal{C}^1(\R_+\times \mathbb{S};\,[0,1])^{IJ}$.
\end{lemma}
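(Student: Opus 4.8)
The plan is to treat the gating vector $z$ as a functional of the voltage $v$, thereby collapsing the coupled system into a single fixed-point equation for $v$, which I would first solve on a short time interval by a contraction mapping argument and then extend to all of $\R_+$ using the growth bound \eqref{inq_v}. \emph{Step 1 (the solution operator $v\mapsto z[v]$).} Fix $T>0$ and $v\in\mathcal{C}([0,T]\times\mathbb{S})$. For each fixed $x\in\mathbb{S}$ and each $i$, \eqref{eq:general_PDE2z} is a linear Carathéodory system of ODEs in $t$ for the vector $z_{i,\cdot}(\cdot,x)$, with coefficients $s\mapsto A_{i,(a,b)}(v(s,x))$ that are Borel measurable and bounded on $[0,T]$ (since $v$ is bounded there and each $A_{i,(a,b)}$ is bounded on compacts), so there is a unique absolutely continuous solution, denoted $z[v]$. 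Because $\mathbf{A}_i(\cdot)$ has zero row sums, $\tfrac{d}{dt}\sum_j z_{i,j}(t,x)\equiv 0$, so $\sum_j z_{i,j}\equiv 1$ by Assumption \ref{A:initial condtions_PDE}; and because its off-diagonal entries are nonnegative, the variation-of-constants Picard iterates built from the nonnegative datum stay nonnegative, giving $z_{i,j}[v]\in[0,1]$. Joint continuity of $z[v]$ in $(t,x)$ and the local Lipschitz bound $\|z[v_1]-z[v_2]\|_{L^\infty([0,T]\times\mathbb{S})}\le C(R,T)\|v_1-v_2\|_{L^\infty}$ on balls $\{\|v_i\|_{L^\infty}\le R\}$ follow from Gronwall's inequality once the coefficients are known to depend continuously on $v$; this is the one place where the behaviour of $A$ enters, it is routine when the $A_{i,(a,b)}$ are continuous (as in all the examples), and the general measurable-and-locally-bounded case is handled via the Markov transition-semigroup representation of $z[v]$.

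\emph{Step 2 (local well-posedness for $v$, global extension, and \eqref{inq_v}).} Set $F[v](s,x):=\sum_{i,j}z_{i,j}[v](s,x)\,g_{i,j}(v(s,x))$ and $\Phi(v)(t,x):=e^{D\Delta t}v_0(x)+\int_0^t e^{D\Delta(t-s)}F[v](s,\cdot)(x)\,ds$ on $\mathcal{C}([0,T]\times\mathbb{S})$. Using $\|e^{D\Delta t}\phi\|_{L^\infty}\le\|\phi\|_{L^\infty}$, $0\le z_{i,j}[v]\le 1$, and $|g_{i,j}(u)|\le|g_{i,j}(0)|+L_{i,j}|u|$, one gets $\|\Phi(v)\|_{L^\infty}\le\|v_0\|_{L^\infty}+T(C_1+C_2\|v\|_{L^\infty})$, so $\Phi$ maps a closed ball of radius $R=\|v_0\|_{L^\infty}+1$ into itself for small $T$; combining the global Lipschitz bounds on the $g_{i,j}$, the $[0,1]$-bound on $z[v]$, and the Step-1 estimate for $z[\cdot]$ gives $\|\Phi(v_1)-\Phi(v_2)\|_{L^\infty}\le C(R)\,T\,\|v_1-v_2\|_{L^\infty}$, a contraction for $T$ small depending only on $R$. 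Banach's theorem produces the unique local solution $(v,z[v])$, and since any solution must satisfy $z=z[v]$ and $v=\Phi(v)$ this is simultaneously a uniqueness statement. Taking $L^\infty$-norms in \eqref{eq:Duhmlv} and using the same growth bounds yields $\|v(t,\cdot)\|_{L^\infty}\le\|v_0\|_{L^\infty}+\int_0^t(C_1+C_2\|v(s,\cdot)\|_{L^\infty})\,ds$, whence Gronwall gives \eqref{inq_v}; in particular $v$ cannot blow up in finite time, so any $[0,T]$ is covered by finitely many contraction steps of a length depending only on $T$ (through the a priori bound $Ce^{CT}$), and the pieces patch together by uniqueness into a global solution with properties (i)--(ii).

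\emph{Step 3 (higher regularity under \eqref{A:regularity}).} First upgrade the spatial regularity: formally differentiating \eqref{eq:Duhmlv} and \eqref{eq:general_PDE2z} in $x$ gives a closed system of integral equations for $(\partial_x v,\partial_x z)$ whose right-hand sides are Lipschitz on bounded sets (using $g_{i,j}\in\mathcal{C}^{1,1}$, $A_{i,(a,b)}\in\mathcal{C}^1$, $(z_{i,j})_0\in\mathcal{C}^1$, $v_0\in\mathcal{C}^2$); a Picard argument for this augmented system, together with uniqueness, identifies its solution with the genuine $x$-derivatives, so $v$ and $z$ are $\mathcal{C}^1$ in $x$. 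Then $f:=\sum_{i,j}z_{i,j}g_{i,j}(v)$ is continuous in $t$ and $\mathcal{C}^1$ in $x$, hence parabolic-Hölder continuous, and $v$ solves the semilinear heat equation $\partial_t v=D\Delta v+f$ on $\mathbb{S}$, which, having no boundary, imposes no compatibility conditions; classical parabolic Schauder theory then gives $v\in\mathcal{C}^2(\R_+\times\mathbb{S})$ in the stated sense, i.e. $\partial_t v,\partial_x v,\partial_x^2 v$ continuous with the PDE holding pointwise. Finally, with $v\in\mathcal{C}^2$ and $A_{i,(a,b)}\in\mathcal{C}^1$ the right-hand side of \eqref{eq:general_PDE2z} is $\mathcal{C}^1$ in $(t,x)$, so $z\in\mathcal{C}^1(\R_+\times\mathbb{S};[0,1])^{IJ}$.

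\emph{Main obstacle.} The central difficulty is the mutual coupling of $v$ and $z$, which must be closed off in a single fixed-point step although each variable feeds into the equation for the other; concretely this is the analysis of the operator $v\mapsto z[v]$ in Step 1 --- its invariance of the probability simplex and, above all, its continuity and Lipschitz dependence on $v$, which is where the rate-matrix structure of $\mathbf{A}_i$ and the regularity of the $A_{i,(a,b)}$ are genuinely used. In the higher-regularity part the analogous bottleneck is the bootstrap in Step 3: one cannot apply parabolic Schauder estimates to $v$ until the forcing $f$ is known to have spatial derivatives, which itself requires first establishing $\mathcal{C}^1_x$ regularity of $z$ --- hence the need for the auxiliary fixed-point argument for $(\partial_x v,\partial_x z)$ before the classical theory applies.
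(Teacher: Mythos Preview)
Your approach --- collapsing the system to a single fixed-point problem for $v$ via the solution operator $v\mapsto z[v]$, applying Banach's theorem locally, extracting the a priori bound \eqref{inq_v} by Gronwall to extend globally, and then bootstrapping regularity --- is precisely the ``standard fixed-point argument and Gronwall's lemma'' that the paper invokes in lieu of a proof (with a pointer to \cite[Section 3.3]{austin2008emergence}), so there is no independent argument in the paper to compare against and your sketch is considerably more detailed than what the paper provides.

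The one soft spot worth flagging is your Step~1 Lipschitz estimate $\|z[v_1]-z[v_2]\|_{L^\infty}\le C(R,T)\|v_1-v_2\|_{L^\infty}$ under Assumption~\ref{A:functions g and A} \emph{alone}: that assumption allows the $A_{i,(a,b)}$ to be merely Borel measurable and locally bounded, in which generality the composition $s\mapsto A_{i,(a,b)}(v(s,x))$ need not depend continuously (let alone Lipschitz) on $v$, and your appeal to a ``Markov transition-semigroup representation'' does not obviously repair this. The contraction argument is clean once the $A_{i,(a,b)}$ are at least locally Lipschitz --- which is the case in every example the paper treats and is in any event forced by \eqref{A:regularity} for the regularity statement --- and the paper itself does not address this gap either, so this is a shared lacuna rather than a defect of your write-up specifically.
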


The proof of Lemma \ref{L:existencePDE} follows from a standard fixed-point argument and Gronwall's lemma; see \citep[Section 3.3]{austin2008emergence}. The regularity assumption \eqref{A:regularity} are not needed for the existence and uniqueness of continuous solution in Lemmas \ref{L:existencePDE}  and \ref{L:existenceStoc}, but it will be assumed in Theorem \ref{T:Main}.

\begin{assumption}[Initial condition for \eqref{eq:general_model V} and \eqref{eq:general_model Z}] \label{A:initial condtions_Discrete} \label{A:initial condtions}
Suppose that, for each $n\in\mathbb{N}$ (hence for each $h=h_n=L/n$), 
$V^{(k)}(0)=v_0(hk)$ for $k\in\{0,1,\cdots,n-1\}$ and that
$\{Z_{i,\cdot}^{(k)}(0):\;1\leq i\leq I,\,0\leq k\leq n-1\}$ are independent and identically distributed (i.i.d.) random variables such that 
\begin{equation}\label{E:initial condtions_Discrete}
\mathbb{P}(Z_{i,\cdot}^{(k)}(0)=\,e_j)=\ (z_{i,j})_0(hk) \quad \text{for }j\in\{1,2,\cdots, J\},
\end{equation}
where $\{e_j\}_{j=1}^J$ is the standard basis of $\R^J$.
\end{assumption}

\begin{remark}
    Independence in $i$ is not needed for the proof of Theorem \ref{T:Main}, but we keep it here to fix ideas. See Example \ref{Ex:Switching Channels} for an example of dependent initial conditions.  
\end{remark}

\begin{lemma}[Existence and uniqueness for stochastic model]\label{L:existenceStoc}
Suppose Assumptions \ref{A:functions g and A}, \ref{A:initial condtions_PDE} and \ref{A:initial condtions} hold. Fix any $n\in \mathbb{N}$.
Suppose $\{Y^{(k)}_{i,(a,b)}:\;:\;1\leq i\leq I,\,1\leq a,b\leq J,\,a\neq b,\;0\leq k \leq n-1 \}$ are independent unit rate Poisson processes and
$\{Z_{i,\cdot}^{(k)}(0):\;1\leq i\leq I,\,0\leq k\leq n-1\}$ are independent random variables satisfying Assumption \ref{A:initial condtions}, and these two independent families are defined on the same probability space $(\Omega, \mathcal{F},\P)$. Then there exists a unique continuous-time Markov process $(V,Z):=\left( V^{(k)},\,\{Z^{(k)}_{i,j}\}_{1\leq i\leq I,\,1\leq j\leq J}\right)_{k=0}^{n-1}$ that satisfies
\eqref{eq:general_model V} and \eqref{eq:general_model Z}. 
Furthermore, $V$ and $Z$ satisfy the following:
\begin{itemize}
\item[(i)] there exists a deterministic constant $C\in(0,\infty)$ such that for all $T\in(0,\infty)$,
\begin{equation}\label{E:aprioriBoundV}
\sup_{n\in\mathbb{N}}\sup_{t\in[0,T]} \max_{0\leq k\leq n-1}|V^{(k)}(t)| \leq C\,e^{CT} \qquad\P-a.s.
\end{equation}
\item[(ii)]
for all $1\leq i\leq I$ and $0\leq k\leq n-1$,
the process
$Z_{i,\cdot}^{(k)}=\left(Z_{i,j}^{(k)}\right)_{j=1}^J$ takes values in the standard basis of 
$\R^J$ and satisfies
 \[
 \sum_{j=1}^{J}Z^{(k)}_{ i,j}(t)=1 \qquad\text{for all }t\in\R_+.
 \]
 \end{itemize}
\end{lemma}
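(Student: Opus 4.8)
The plan is to construct $(V,Z)$ pathwise by the standard interval-wise recipe for a piecewise-deterministic Markov process, verify the two a priori properties, and then read off the Markov property and uniqueness. Set $\tau_0:=0$ and suppose $(V,Z)$ has been built on $[0,\tau_{m-1}]$ with every $Z^{(k)}_{i,\cdot}(\tau_{m-1})$ equal to a standard basis vector. Freezing $Z$ at its value at $\tau_{m-1}$, equation \eqref{eq:general_model V} becomes the finite-dimensional ODE $\dot V=\mathcal{L}_h V+G(V)$, where $\mathcal{L}_h$ is the (linear) discrete Laplacian and $G(V)^{(k)}=\sum_{i,j}Z^{(k)}_{i,j}\,g_{i,j}(V^{(k)})$; since each $g_{i,j}$ is globally Lipschitz (Assumption \ref{A:functions g and A}) and, for each $i$, exactly one $Z^{(k)}_{i,j}$ equals $1$, the right-hand side is globally Lipschitz on $\R^n$ and the Picard--Lindel\"of theorem gives a unique global solution. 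I would then let $\tau_m$ be the first time after $\tau_{m-1}$ at which one of the internal clocks $t\mapsto\int_0^tA_{i,(a,b)}(V^{(k)}(s))\,Z^{(k)}_{i,a}(s)\,ds$ reaches the next jump point of the associated Poisson process $Y^{(k)}_{i,(a,b)}$, and update $Z$ at $\tau_m$ by applying the transition $(a,b)$ that fired, i.e.\ replacing the unique active $e_a$ of channel $(k,i)$ by $e_b$. Conditioning on the information available at $\tau_{m-1}$ and using independence of the $Y^{(k)}_{i,(a,b)}$ together with absolute continuity of the clocks shows that almost surely no two clocks fire simultaneously, so this update is unambiguous; this is the standard PDMP construction, cf.\ \citep{buckwar2011exact,riedler2012limit}.

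The recursion defines $(V,Z)$ up to a possible explosion time $\tau_\infty$. Inspecting \eqref{eq:general_model Z} shows that each jump carries $Z^{(k)}_{i,\cdot}$ from one standard basis vector to another: when $Z^{(k)}_{i,\cdot}=e_a$, the only nonzero integrand among the ``out of $j$'' terms is the one with $j=a$ and the only nonzero ``into $j$'' integrand has source $a$, so $Z^{(k)}_{i,a}$ drops to $0$, the target coordinate rises to $1$, and all other coordinates are untouched. Hence, by induction on jumps, $Z^{(k)}_{i,\cdot}(t)\in\{e_1,\dots,e_J\}$ and $\sum_jZ^{(k)}_{i,j}(t)=1$ for all $t<\tau_\infty$, which is part (ii); and $V$ is continuous on $[0,\tau_\infty)$ because \eqref{eq:general_model V} contains no jump term.

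Next I would establish the a priori bound (i) on $[0,\tau_\infty)$. The one point here that is not routine is that the constant in \eqref{E:aprioriBoundV} must be uniform in $n$, which forbids using the $O(n^2)$ Lipschitz norm of $\mathcal{L}_h$; instead one uses the discrete maximum principle --- at an index where $V^{(k)}(t)$ attains $\max_\ell V^{(\ell)}(t)$ the discrete Laplacian is $\le 0$, and at an index where it attains $\min_\ell V^{(\ell)}(t)$ it is $\ge 0$ --- together with $|G(V)^{(k)}|\le\sum_i\max_j|g_{i,j}(V^{(k)})|\le C_0(1+|V^{(k)}|)$, $C_0$ depending only on the $g_{i,j}$. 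A Gronwall comparison for $t\mapsto\max_\ell V^{(\ell)}(t)$ and $t\mapsto-\min_\ell V^{(\ell)}(t)$, applied on each inter-jump interval and propagated across jumps by continuity of $V$, then gives $\max_k|V^{(k)}(t)|\le Ce^{Ct}$ on $[0,\tau_\infty)$ with $C$ depending only on $\|v_0\|_{L^\infty(\mathbb{S})}$ and the $g_{i,j}$, hence on neither $n$ nor $\omega$. Non-explosion now follows: on $[0,T\wedge\tau_\infty)$ this bound confines every $V^{(k)}(s)$ to a fixed compact interval, so every rate $A_{i,(a,b)}(V^{(k)}(s))$ is bounded by a deterministic constant $\bar A_T$ (Assumption \ref{A:functions g and A}), each internal clock grows at rate $\le\bar A_T$, and the number of jumps before $T\wedge\tau_\infty$ is at most $\sum_{k,i,a,b}Y^{(k)}_{i,(a,b)}(\bar A_T T)<\infty$ a.s.; finitely many jumps cannot accumulate, so $(V,Z)$ in fact reaches time $T$, and since $T$ is arbitrary $\tau_\infty=\infty$, giving (i).

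Finally, the process constructed this way is a piecewise-deterministic Markov process --- the flow between jumps depends only on the current state and the intensities $A_{i,(a,b)}(V^{(k)})$ are functions of the current state --- so its Markov property is standard \citep{buckwar2011exact,riedler2012limit,riedler2015spatio}. Uniqueness is pathwise: if $(\tilde V,\tilde Z)$ is any solution of \eqref{eq:general_model V}--\eqref{eq:general_model Z} on the given probability space, then on $[0,\tau_1)$ the coordinates of $\tilde Z$ cannot move, since ODE uniqueness forces $\tilde V=V$ there and hence the internal clocks of $\tilde Z$ agree with those of the construction, which reach no Poisson jump point before $\tau_1$; the transition at $\tau_1$ is then forced, and induction on the jumps gives $(\tilde V,\tilde Z)=(V,Z)$. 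I expect the main obstacle to be the $n$-uniform a priori estimate of the third paragraph; the remaining steps mirror the fixed-point and Gronwall argument already used for Lemma \ref{L:existencePDE} (cf.\ \citep[Section 3.3]{austin2008emergence}).
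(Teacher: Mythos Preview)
Your proposal is correct and follows exactly the approach the paper indicates: the paper does not give a detailed proof but simply states that the result ``follows from solving from one jump of $Z$ to the next jump, as a piecewise-deterministic Markov process'' with references to \citep[Section 1.2]{anderson2015stochastic} and \citep[Section 3.4]{austin2008emergence}. Your write-up is a careful expansion of that sketch, and in particular your use of the discrete maximum principle to secure the $n$-uniform constant in \eqref{E:aprioriBoundV} supplies a detail the paper leaves implicit.
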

The proof of Lemma \ref{L:existenceStoc} follows from solving from one jump of $Z$ to the next jump, as a piecewise-deterministic Markov process; see \citep[Section 1.2]{anderson2015stochastic} and  \citep[Section 3.4]{austin2008emergence}. The initial conditions would be fully specified if we also demand independence in $i$; however doing so is unnecessary for the results of this paper. Nevertheless, for the Hodgkin-Huxley model described by \eqref{Def:Hodgkin_Huxley}, it is probably most natural to take independence in $i$ as it indexes different types of ion channels.

\subsection{Examples}\label{SS:Examples}

\begin{example}[Wave propagation]\label{Eg:AllenCahnType}\rm

Consider the following special case of the general model \eqref{eq:general_model V}-\eqref{eq:general_model Z}:
\begin{equation}\label{E:V}
\frac{dV^{(k)}_t}{dt}=\, \frac{D\,(V^{(k-1)}-2V^{(k)}+V^{(k+1)}) }{h^2} \,+\,Z^{(k)}_t\,f(V^{(k)}_t) -g(V^{(k)}_t)
\end{equation}
where 
$\{Z^{(k)} \}_{k=0}^{n-1}$ satisfy
\begin{equation}\label{E:Zkl}
Z^{(k)}_t=\,Z^{(k)}_0+ \mathcal{N}^{(k)}_+\left(\int_0^t \alpha(V^{(k)}_s)\, (1-Z^{(k)}_s)\,ds\right) - \mathcal{N}^{(k)}_-\left(\int_0^t \beta(V^{(k)}_s)\,Z^{(k)}_s \,ds\right), 
\end{equation}
where $\{ \mathcal{N}^{(k)}_+,\, \mathcal{N}^{(k)}_-\}_{k\in \mathbb{S}_n}$ are independent unit-rate Poisson processes. 

Here, there is one channel with one gate at each location $k$ and the channel state  $Z^{(k)}$ is either open (1) or closed (0). Furthermore, given the $V^{(k)}$,  the transition rate of $Z^{(k)}$ from closed (0) to open (1)  is $\alpha(V^{(k)}_t)$, and that from open to closed is $\beta(V^{(k)}_t)$  at time $t$. 

Note that \eqref{E:V}-\eqref{E:Zkl} is indeed a special case of the general model \eqref{eq:general_model V}-\eqref{eq:general_model Z} by the following choice of parameters and initial conditions. 
Set $J=I=2$, $g_{1,1}(v)=f(v)$ and $g_{2,1}=-g(v)$. Further set $g_{1,2}\equiv g_{2,2}\equiv 0$. The rate functions are given by $A_{1,(1,2)}=\alpha$ and $A_{1,(2,1)}=\beta$. For all $(a,j) \in \{1,2\}\times\{1,2\}$ we have that $A_{2,(a,j)}=0$. Because there is no stochastic term associated with $g_{2,1}$, we would need to initialize $Z^{(k)}_{2,1}=1$ for all $k$. Since the rate matrix is $0$ for $i=2$, $Z^{(k)}_{2,1}$ remains $1$ for all time. 



In this example, the corresponding PDE
\eqref{eq:general_PDE1}-\eqref{eq:general_PDE2}
becomes
\begin{align}
\frac{\partial v(t,x)}{\partial t}=&\,D\Delta v\,+\,z\,f(v) -g(v) \label{eqn_V_pde}\\
\frac{dz(t,x)}{dt}=&\,\alpha(v) (1-z) -\beta(v)z. \label{eqn_Z_pde}
\end{align}
PDE \eqref{E:V}-\eqref{E:Zkl} can exhibit travelling wave behavior (see Section \ref{S:simulation}) and is related to reaction-diffusion equations such as the Allen-Cahn equation. Suppose $\alpha$ and $\beta$ tends to infinity at the same rate as a parameter $\epsilon\to 0$. For example, suppose 
\[
\alpha(v)=\frac{1}{\epsilon} \hat{\alpha}(v) \quad\text{and}\quad \beta(v)=\frac{1}{\epsilon}\hat{\beta}(v)
\]
for some fixed positive functions $\hat{\alpha}$ and $\hat{\beta}$. Heuristically, as $\epsilon\to 0$, equation \eqref{eqn_Z_pde} suggests that $z(t,x)\to \frac{\alpha(v)}{\alpha(v)+\beta(v)}$ and that equation \eqref{eq:general_PDE1} converges to 
\[
\frac{\partial v(t,x)}{\partial t}=\,D\Delta v\,+\,\frac{\alpha(v)}{\alpha(v)+\beta(v)}\,f(v) -g(v).
\]
which is a reaction-diffusion equation. For suitable choices of $\alpha$ and $\beta$, the reaction term can be bi-stable; see for instance  \citep[Chapter 6]{keener2009mathematical} for wave propagation in excitable systems. 

As a remark, a related slow-fast analysis for the stochastic model  in \citep{austin2008emergence} 
was considered in
\citep{genadot2012averaging} and  \citep{genadot2014multiscale}, where the spacing between adjacent channel ($h$ in this paper and $1/N$ in their papers) is fixed.

\end{example}

\begin{example}[Multiple Gates] \label{Ex:product_process}\rm
We give a simple example  to demonstrate how products of gates can be formulated as Markov chains.  Fixing $i\in \{1,2,\ldots, I\}$ and $k\in\mathbb{S}_n$, suppose we are given two independent gates $\xi_{i}^{(k)} \in \{0,1\}$ and $\widetilde{\xi}_{i}^{(k)}\in \{0,1\}$. Suppose that $\xi_{i}^{(k)}$ transitions from $0$ to $1$ with rate $\alpha$ and from $1$ to $0$ with rate $\beta$ . Analogously the rates for $\widetilde{\xi}_{i}^{(k)}$ are given by $\tilde{\alpha}$ and $\tilde{\beta}$.  Then the process $(\xi_{i}^{(k)},\widetilde{\xi}_{i}^{(k)})\in \{0,1\}^2$ can be modeled by a process $Z_{i,\cdot}^{(k)} \in \{e_1,e_2,e_3,e_4\} $ meaning
\begin{equation}
\begin{aligned}
Z_{i,\cdot}^{(k)} \eqd \left[(1-\xi_{i}^{(k)})(1-\widetilde{\xi}_{i}^{(k)}), \ \xi_{i}^{(k)}(1-\widetilde{\xi}_{i}^{(k)}), \ (1-\xi_{i}^{(k)})\widetilde{\xi}_{i}^{(k)}, \ \xi_{i}^{(k)}\widetilde{\xi}_{i}^{(k)}\right]^T.
\end{aligned}
\label{Eq:example_1}
\end{equation}
 Thus $J=4$ in this example is determined by the fact that we need to represent a product of two binary gates. The rates for $Z_{i,\cdot}^{(k)}$ are then determined by the rates $ \xi_{\ell}^{(k)}$ and $\widetilde{\xi}_{\ell}^{(k)}.$  The rate matrix is given by
 \begin{align*}
     \textbf{A}_i=
     \begin{bmatrix}
      -\alpha -\widetilde{\alpha} & \alpha & \widetilde{\alpha} & 0 \\
       \beta  & -\widetilde{\alpha}-\beta & 0 & \widetilde{\alpha} 
       \\
      \widetilde{\beta}  & 0 & -\alpha -\widetilde{\beta} & \alpha
       \\
       0  & \widetilde{\beta} & \beta & -\beta-\widetilde{\beta} 
     \end{bmatrix}.
 \end{align*}
 For example, the rate that $Z_{i,\cdot}^{(k)}$ transitions from $e_1$ to $e_2$ is given by $\alpha$ and the rate from $e_4$ to $e_2$ is given by $\widetilde{\beta}.$ Thus the processes $Z_{i,4}^{(k)}$ exactly models $\xi_{i}^{(k)}\widetilde{\xi}_{i}^{(k)}$, the product of two gates.  Note, however, that defining $Z_{i,\cdot}^{(k)}$ as the product process of two gates is only a subset of the ways in which we may define it, and it is thus a more general object. 
\end{example}

\begin{example}[Hodgkin-Huxley model] \label{Ex:HH}\rm
To obtain the Hodgkin-Huxley model \eqref{E:HH1}-\eqref{E:HH3} specifically, we take $I=3$ and $J=16$. $I=3$ corresponds to the three channels: Sodium, Potassium, and leak. $J=16$ accommodates the four gates for Sodium channels and Potassium channels. For $a,b \in \{1,2,\ldots, 16\}$, we write $a \sim b$, if $(a,b)$ is an edge between the labeled vertices of the hypercube pictured in Figure \ref{fig:Labeled_Cube}. Then one possible way to define the rate matrices and functions $\{g_{i,j}\}$ is
\begin{align}
A^{(k)}_{i,(a,b)}=\begin{cases}
\alpha_{M} & i=1, \ a \sim b, \ \text{and} \ 0<b-a<8 \\
\beta_{M} & i=1, \ a \sim b, \ \text{and} \ 0<a-b<8 \\
\alpha_{H} & i=1, \ a \sim b, \ \text{and} \  b-a=8\\
\beta_{H} & i=1, \ a \sim b, \ \text{and} \  b-a=-8\\
\alpha_{N} & i=2, \ a \sim b, \ \text{and} \ a<b \\
\beta_{N} & i=2, \ a \sim b, \ \text{and} \ a>b\\
0 & \text{Otherwise}
\end{cases}
\quad 
g_{i,j}(\cdot)=
\begin{cases}
-g_{Na}(\cdot-E_{Na+}) & (i,j)=(1,16) \\
-g_{K}(\cdot-E_{K+}) & (i,j)=(2,16) \\ 
-g_{L}( \cdot -E_L) & (i,j)=(3,1) \\
0 & \text{Otherwise}
\end{cases}.
\label{Def:Hodgkin_Huxley}
\end{align}
\begin{figure}
    \caption*{Transition Network of Hodgkin-Huxley Ion Channel}
    \centering
    \includegraphics[width=\textwidth]{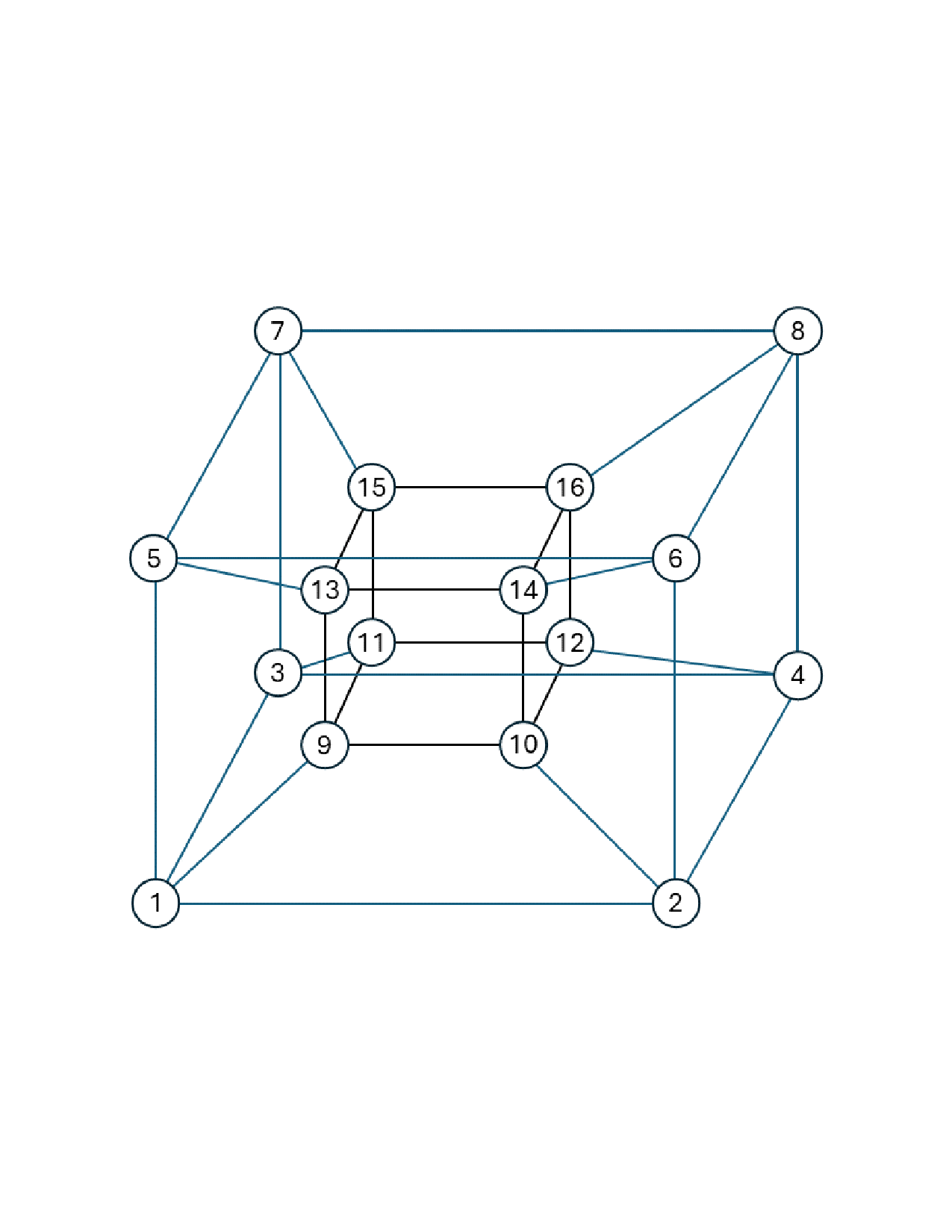}
    \caption{Possible transition between configurations of an ion channel in the Hodgkin-Huxley Model.}
    \label{fig:Labeled_Cube}
\end{figure}
Note that the leak channel is not stochastic, and therefore we choose the rate matrices for the leak channels to be $0$ in all entries. Then according to our choice of the functions $\{g_{i,j}\}$, at $t=0$ we have to set $Z_{3,1}^{(k)}=1$.

One way to pick initial conditions in this example would be the following. Since, we have three gate types $\{M,N,H\}$, we suppose we are given three sufficiently smooth functions $z_H , z_M,$ and $ z_N$,  and suppose $z_H(x),z_M(x),z_N(x) \in [0,1]$ for all $x$. Then we let 
    \begin{align}
    (z_{i,j})_0 \equiv \begin{cases}
    (2-i)(1-z_M)^3(1-z_H)+(i-1)(1-z_N)^4 & i \in \{1,2\}, j=1 \\
   (2-i)z_M(1-z_M)^2(1-z_H) +(i-1)z_N(1-z_N)^3& i \in \{1,2\}, j \in \{2,3,5\}
   \\
   (2-i)z_M^2(1-z_M)(1-z_H)+(i-1)z_N^2(1-z_N)^2 & i \in \{1,2\}, j \in \{4,6,7\}
   \\
   (2-i)z_M^3(1-z_H)+(i-1)z_N^3(1-z_N) & i \in \{1,2\}, j =8 \\
   (2-i)(1-z_M)^3z_H+(i-1)(1-z_N)^3z_N & i \in \{1,2\},j=9\\
   (2-i)z_M(1-z_M)^2z_H+(i-1)z_N^2(1-z_N)^2& i \in \{1,2\}, j \in \{10,11,13\}
   \\
   (2-i)z_M^2(1-z_M)z_H+ (i-1)z_N^3(1-z_N)& i\in\{1,2\}, j \in \{12,14,15\}
   \\
   (2-i)z_M^3z_H+(i-1)z_N^4 & i\in \{1,2\}, j =16 
   \\
   1 & i=3, j=1 \\
   0 & \text{Otherwise}
    \end{cases}.
    \end{align}
    Further we assume independence in $i$ so the that the distribution on the initial conditions is fully specified.
\end{example}

In defining the Hodgkin-Huxley model in \eqref{Def:Hodgkin_Huxley}, we assumed that each compartment contained one of each kind of channel. We can introduce missing channels into the model by including a degenerate random process that is constant in time, but whose initial conditions are still chosen randomly according to Assumption \ref{A:initial condtions}. We demonstrate this with an example. 
\begin{example}
[Mutually exclusive channel]
\label{Ex:Switching Channels}
\rm
Suppose that in each compartment $k$ there is either the ion channel associated with $f$ or the one associated with $g$, but never both. Moreover, the ion channel associated with $f$ can open and close, while the one associated with $g$ is always open. Such a model can be obtained by modifying the set-up of Example \ref{Ex:product_process} with $i \in\{1,2\}$. Suppose $\tilde{\alpha}_i\equiv \tilde{\beta}_i\equiv 0$. Further suppose that  $\widetilde{\xi}^{(k)}_1(0) \sim \xi^{(k)}_1(0) \sim\text{Ber}(p)$, $\widetilde{\xi}^{(k)}_1(0)$ and  $\xi^{(k)}_1(0)$ are independent. Then assume  $\xi^{(k)}_2(0)=1-\xi^{(k)}_1(0)$ and $\widetilde{\xi}^{(k)}_2(0)=1-\widetilde{\xi}^{(k)}_1(0)$. Suppose $V$ satisfies 
\begin{equation}\label{E:V_xik2}
\frac{dV^{(k)}_t}{dt}=\, \frac{D\,(V^{(k-1)}-2V^{(k)}+V^{(k+1)}) }{h^2} \,+\,\xi_1^{(k)}\,\widetilde{\xi}_1^{(k)}\,f(V^{(k)}_t) \,-\,\widetilde{\xi}_2^{(k)}\,g(V^{(k)}_t).
\end{equation}

To see we can rewrite the equation \eqref{E:V_xik2} in the framework of our general model \eqref{eq:general_model V}-\eqref{eq:general_model Z}, first
note \eqref{Eq:example_1} still holds for all time if we set 
\begin{align}
\mathbb{P}(Z^{(k)}_{1,\cdot}(0)=e_j)=
\begin{cases}
(1-p)^2& j=1\\
(1-p)p&j=2\\
(1-p)p&j=3\\
p^2 &j=4\\
\end{cases},
\quad \text{and} \quad Z^{(k)}_{2,\cdot}(0)=
\begin{cases}
e_1 & Z_{1,\cdot}^{(k)}=e_3 \\
e_2 & Z_{1,\cdot}^{(k)}=e_4  \\  
e_3 & Z_{1,\cdot}^{(k)}=e_1  \\
e_4 & Z_{1,\cdot}^{(k)}=e_2
\end{cases}.
\label{E:example2}
\end{align} 
Moreover, since $\widetilde{\alpha}_i\equiv \widetilde{\beta}_i\equiv 0$, the second part of \eqref{E:example2} holds for all time and either $Z_{1,\cdot}^{(k)}(t) \in \{e_1,e_2\}$ or $Z_{1,\cdot}^{(k)}(t) \in \{e_3,e_4\}$. Thus, just as only one of the two products $\{\xi_i^{(k)}\widetilde{\xi}_{i}^{(k)}\}_{i=1,2}$  can ever be equal to $1$, so too can only one of $\{Z^{(k)}_{i,\cdot}\}_{i=1,2}$ be equal to $e_4.$ This is reflected in the block structure of the rate matrices
\begin{align*}
     \textbf{A}_i=
     \begin{bmatrix}
      -\alpha_i & \alpha_i & 0 & 0 \\
       \beta_i  & -\beta_i & 0 & 0 
       \\
      0 & 0 & -\alpha_i  & \alpha_i
       \\
       0  & 0& \beta_i & -\beta_i 
     \end{bmatrix}.
 \end{align*}
Note there are other (simpler) possible rate matrices which would work for this example. Then the $V$ satisfying
\begin{equation}\label{E:V_xik3}
\frac{dV^{(k)}_t}{dt}=\, \frac{D\,(V^{(k-1)}-2V^{(k)}+V^{(k+1)}) }{h^2} \,+\,Z^{(k)}_{1,4}\,f(V^{(k)}_t) \,-\,(Z^{(k)}_{2,3}+Z^{(k)}_{2,4})\,g(V^{(k)}_t)
\end{equation}
is equal in distribution to the one satisfying \eqref{E:V_xik2} and clearly fits into the framework of \eqref{eq:general_model V}-\eqref{eq:general_model Z}. 

There is another way to write \eqref{E:V_xik3} in our framework. This is because equations \eqref{eq:general_model V}-\eqref{eq:general_model Z} are over parameterized to accommodate the interpretation that $i$ represents channel type and $j$ represents channel state. If we do away with this interpretation, we can take $I =1$. Then, with the same initial conditions and rate matrix for $i =1$ as before, we let $V$ satisfy
\[\frac{dV^{(k)}_t}{dt}=\, \frac{D\,(V^{(k-1)}-2V^{(k)}+V^{(k+1)}) }{h^2} \,+\,Z^{(k)}_{1,4}\,f(V^{(k)}_t) \,-\,(Z^{(k)}_{1,1}+Z^{(k)}_{1,2})\,g(V^{(k)}_t) .\]

\end{example}

The next example shows how the we may include macroscopic variation the ion channel density by varying, in $k$, in $k$ the probability of it being present or absent in the $k^{\text{th}}$ channel. The example is only for one kind of ion channel, but a multinational random variable at each site $k$ could be used to include more kinds of ion channels.

\begin{example}[Macroscopic channel density]\rm
If we take $g \equiv 0$ in Example \ref{Ex:Switching Channels}, then we have an example with one type of single gated ion channel that has the probability of $1-p$ of being missing at any site $k$, but let us be a bit more general. Suppose that at each site $k$, the probability an ion channel exists is $p^{(k)}$, and the probability it is open, given it exists, is $q^{(k)}$. That is, in reference to Example \ref{Ex:Switching Channels}, we take $\widetilde{\xi}^{(k)}\sim \text{Ber}(p^{(k)})$ and $\xi^{(k)} \sim \text{Ber}(q^{(k)}).$ Then the first part of \eqref{E:example2} becomes 
\begin{align}
\mathbb{P}(Z^{(k)}_{1,\cdot}(0)=e_j)=
\begin{cases}
(1-p^{(k)})(1-q^{(k)})& j=1\\
(1-p^{(k)})q^{(k)}&j=2\\
p^{(k)}(1-q^{(k)})&j=3\\
p^{(k)}q^{(k)} &j=4\\
\end{cases}.\label{E:product_struct}
\end{align}
Our main theorem can be applied to such a set-up as long as for all $k$, $p^{(k)}:=p(hk) \in [0,1]$ and $q^{(k)}:=q(hk)\in[0,1]$ for some $p,q\in \mathcal{C}^{1}(\mathbb{S})$. Thus the assumption in \eqref{E:initial condtions_Discrete} is met. Note that this means the limiting PDE for $v$ may be written in the form 
\begin{align}
\begin{aligned}
\frac{\partial v_t}{dt}=\, D\Delta v_t \,+\,p(x)\,z_t(x)\,f(v_t)
\\
\frac{\partial z_t}{dt}=\alpha_1(v_t)(1-z_t)-\beta_1(v_t)z_t
\end{aligned}
\end{align}
after undoing the product structure introduced in \eqref{E:product_struct} used to pass to the deterministic limit. The initial condition for $z$ is $q$. The function $p$ reflects the channel density of the ion channels.
\label{Ex:macro_density}
\end{example}

\section{Strong convergence result}
\label{S:Result}

For each $t\in\R_{>0}$ and $(i,j)$, the function $z_{i,j}(t,\cdot )$ is continuous on the circle $\mathbb{S}$, but the random variables $\{Z^{k}_{i,j}(t)\}_{k=0}^{n-1}$ take value in the set $\{0,1\}$ which is "too discontinuous" as a function over space (the circle): even after spatial interpolation,  $\{Z^{k}_{i,j}(t)\}_{k=0}^{n-1}$ does not converge in distribution to any element in $\mathcal{C}(\mathbb{S})$ as $n\to\infty$. 
In order to overcome this, a usual approach  is to
consider the empirical measure  which is a weighted sum of Dirac-delta measure at points in the lattice $\{kh\}_{k=0}^{n-1}$ at time $t$, and then establish a convergence as $n\to\infty$. This approach was carried out to establish functional law of large numbers in 
\citep{austin2008emergence,riedler2012limit,riedler2015spatio} and functional central limit theorems in 
\citep{riedler2012limit,riedler2015spatio} in the general context of piecewise deterministic Markov processes, but these convergence results are either in probability or in distribution. 

Here, however, we instead establish strong convergence of $\{Z^{(k)}\}_{k=0}^{n-1} \to z$ as $n\to\infty$ by considering spatial local averages of $\{Z^{(k)}\}_{k=0}^{n-1}$ in space, to be precise see \eqref{Def:localspatial_Z} below. 
An advantage of using spatial local averages rather than empirical measures is that the former gives a stronger notion of convergence for each $p\in[0,1)$, to be explained below Theorem \ref{T:Main}.


\medskip
\noindent
{\bf Spatial local averages. } 
Precisely, for $h\in(0,1)$ and $p\in[0,1)$, we let 
 $\Phi_{h,p} \in \mathcal{C}(\mathbb{S};\, [0,1])$ be a smooth "bump" function that approximates the indicator function $1_{B(0,\,h^p/2)}$ in the sense that 
\begin{equation}
\Phi_{h,p}(x):=\begin{cases}
1 & \text{ if } x\in B(0,\,[h^{p-1}/2]h) \\
0 & \text{ if } x\in \mathbb{S} \setminus B(0,\,[h^{p-1}/2]h+h),
\end{cases}
\label{Def:Phi}
\end{equation} 
where $B(x,r):=\{y\in \mathbb{S}:\,d(x,y)\leq r\}$ is the closed ball centered at $x$ with radius $r$, i.e. the set of points $y$ on the circle with geodesic distance less than $r$ from the center. A precise construction for this smooth function can be found in \eqref{Def:Phi2}. 

Now, for each $t\in\R_+$ we define a smooth function $\bar{z}_t\in \mathcal{C}(\mathbb{S};\, [0,1])$ which is given by local spatial averages of $Z$ as
 \begin{equation}\label{Def:localspatial_Z}
    \bar{z}_{i,j}(t,x):=\frac{1}{N_{h,p}}\sum_{k =0}^{n-1}\Phi_{h,p}(x-hk)Z_{i,j}^{(k)}(t) \quad x\in \mathbb{S},
\end{equation} 
where
\begin{equation}
    N_{h,p}:=2[h^{p-1}/2]+1
    \label{E: Size of N}
\end{equation}
is  the number of ion channels we averaged over. We further define $\overline{Z}^{(k)}$ to be the restriction of $\bar{z}_t$ on the discrete circle. That is,
\begin{align}
\overline{Z}^{(k)}_{i,j}(t):= \bar{z}_{i,j}(t,hk) ,\quad k \in \{0,1,2,\ldots, n-1\}.
\label{Def:discretizedLA}
\end{align}
In other words, $\overline{Z}^{(k)}$ is the average of the  $2[h^{p-1}/2]+1$ nearest ion channels to compartment $k$ including $Z^{(k)}$ itself. Thus $\bar{z}$ is a smooth interpolation of these averages. The larger the $p$, the smaller the number of channels being averaged over. As $p \uparrow 1$, $\overline{Z}^{(k)}\to Z^{(k)}$ for all $k\in \{0,1,2,\ldots, n-1\}$.

\medskip

Our main result is the following almost sure error bound. Recall that $h=h_n=L/n$.
    \begin{theorem}
    \label{T:Main}
     Suppose Assumptions \ref{A:functions g and A}, \ref{A:initial condtions_PDE}, and \ref{A:initial condtions} hold. Suppose the regularity assumption \eqref{A:regularity} holds.
     Let $(V,Z)$ solve \eqref{eq:general_model V} and \eqref{eq:general_model Z}. Let $(v,z)$ solve \eqref{eq:general_PDE2} and \eqref{eq:general_PDE2z}. Then for each $p\in[0,1)$, $\rho \in (p,1)$ and $T \in (0,\infty)$, there exists a real-valued random variable $C_{\omega,T,\rho}$   
     such that we have almost surely that
     \begin{align}
        &\sup_{t \in [0,T]}\left(\max_{k\in \{0,1,2,\ldots, n-1\}}|V^{(k)}_t -v(t,hk)|+\max_{k\in \{0,1,2,\ldots, n-1\}}|\overline{Z}_{i,j}^{(k)}(t)-z_{i,j}(t,hk)|\right) \notag \\
        \leq & C_{\omega,T,\rho} (h^p \vee h^{1/2-p/2} \vee h^{\rho-p})\left|\log(h^{-1})\right|  \label{E:Main}
    \end{align}
for all  $n\in\mathbb{N}$ and $(i,j) \in I\times J$.
\end{theorem}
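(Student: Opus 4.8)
The plan is to close a single Gronwall estimate for the two error functionals
\[
\mathcal{V}(t):=\sup_{s\le t}\ \max_{0\le k\le n-1}\bigl|V^{(k)}(s)-v(s,hk)\bigr|,
\qquad
\mathcal{Z}(t):=\sup_{s\le t}\ \max_{k,i,j}\bigl|\overline{Z}^{(k)}_{i,j}(s)-z_{i,j}(s,hk)\bigr| ,
\]
and to show that the data driving it are, on one event of full probability and uniformly in $n$, of order $(h^{p}\vee h^{1/2-p/2}\vee h^{\rho-p})\,|\log h^{-1}|$. From the outset I would use the a priori bounds \eqref{E:aprioriBoundV} and \eqref{inq_v}: almost surely, all the voltages $V^{(k)}(s)$ and $v(s,x)$ with $s\in[0,T]$ lie in one fixed compact interval, on which, by Assumption \ref{A:functions g and A} and the regularity hypothesis \eqref{A:regularity}, every $g_{i,j}$ and every $A_{i,(a,b)}$ is bounded and Lipschitz with deterministic constants. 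Write both dynamics in mild form with the discrete heat semigroup $e^{D\Delta_h t}$ on $\mathbb{S}_n$: $V^{(k)}(t)=(e^{D\Delta_h t}V(0))^{(k)}+\int_0^t(e^{D\Delta_h(t-s)}R(s))^{(k)}\,ds$ with $R^{(l)}(s)=\sum_{i,j}Z^{(l)}_{i,j}(s)g_{i,j}(V^{(l)}(s))$, while the sampled profile $k\mapsto v(t,hk)$ satisfies the same identity up to a finite-difference consistency term, since $v\in\mathcal{C}^{2}$ by Lemma \ref{L:existencePDE} and $V^{(k)}(0)=v_0(hk)$ exactly. Subtracting and splitting the reaction discrepancy at node $l$ into a piece dominated by $\mathcal{V}(s)$ (Lipschitzness of $g_{i,j}$, and $Z^{(l)}_{i,j}\in\{0,1\}$), a piece dominated by $\mathcal{Z}(s)+O(h^{p})$ (replace $z_{i,j}(s,hl)$ by $\overline{Z}^{(l)}_{i,j}(s)$, then $v(s,hl)$ by adjacent nodal values), and the fine-scale remainder
\[
\mathcal{O}^{(l)}(s):=\sum_{i,j}\bigl(Z^{(l)}_{i,j}(s)-\overline{Z}^{(l)}_{i,j}(s)\bigr)\,g_{i,j}(v(s,hl)),
\]
I obtain $\mathcal{V}(t)\le(\text{consistency})+\sup_{s\le t}\max_{k}\bigl|\int_0^t(e^{D\Delta_h(t-s)}\mathcal{O}(s))^{(k)}\,ds\bigr|+C\int_0^t(\mathcal{V}(s)+\mathcal{Z}(s)+h^{p})\,ds$.

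Estimating the oscillatory term $\int_0^t e^{D\Delta_h(t-s)}\mathcal{O}(s)\,ds$ is the homogenization step, and I expect it to be the main obstacle. Following the multi-scale strategy of \citep{wright2022,mcginnis2023}, the point is that $\mathcal{O}^{(l)}(s)$ --- up to the slowly varying factor $g_{i,j}(v(s,h\cdot))$, the deviation of $Z$ from its own $\Phi_{h,p}$-average --- oscillates on the lattice scale $h$ while its averages over any window of width $\sim h^{p}$ are only $O(h^{p}\vee\mathcal{Z}(s))$. Introducing a corrector $\chi^{(\cdot)}(s)$, e.g. via the discrete Poisson equation $-D\Delta_h\chi^{(\cdot)}(s)=\mathcal{O}^{(\cdot)}(s)-\langle\mathcal{O}(s)\rangle$ on the circle (with $\langle\mathcal{O}(s)\rangle$ the circle-average, itself negligible), or, equivalently, summing by parts against the discrete heat kernel and using that it varies slowly across an averaging window, one extracts cancellation at an intermediate scale $h^{\rho}$ with $h^{p}\gg h^{\rho}\gg h$; the weak smoothing of the kernel as $s\uparrow t$ is handled by splitting $\int_0^{t}=\int_0^{t-\delta}+\int_{t-\delta}^{t}$, bounding the latter crudely by contractivity of $e^{D\Delta_h}$, and optimizing $\delta$. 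This trade-off between the order of the cancellation extracted and the integrability of the kernel is what produces the term $C_{\rho}\,h^{\rho-p}\,|\log h^{-1}|$, with $C_{\rho}\to\infty$ as $\rho\uparrow1$; the finite-difference consistency error is of strictly lower order (e.g. $O(h\,|\log h^{-1}|)$, by coupling the underlying random walk to Brownian motion), hence harmless.

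For $\mathcal{Z}$ I would compensate the Poisson processes in \eqref{eq:general_model Z}, writing $Z^{(l)}_{i,j}(t)=Z^{(l)}_{i,j}(0)+\int_0^{t}b^{(l)}_{i,j}(s)\,ds+M^{(l)}_{i,j}(t)$, where $b^{(l)}_{i,j}$ is exactly the discretized drift of \eqref{eq:general_PDE2z} with $V^{(l)}$ in place of $v$, and $M^{(l)}_{i,j}$ is a martingale with $\pm1$ jumps. Averaging with $\Phi_{h,p}/N_{h,p}$ and subtracting \eqref{eq:general_PDE2z} at $x=hk$ gives
\[
\bigl|\overline{Z}^{(k)}_{i,j}(t)-z_{i,j}(t,hk)\bigr|\le\bigl|\overline{Z}^{(k)}_{i,j}(0)-(z_{i,j})_0(hk)\bigr|+\bigl|\overline{M}^{(k)}_{i,j}(t)\bigr|+C\int_0^{t}\bigl(\mathcal{V}(s)+\mathcal{Z}(s)+h^{p}\bigr)\,ds,
\]
where $\overline{M}^{(k)}_{i,j}=N_{h,p}^{-1}\sum_{l}\Phi_{h,p}(hk-hl)M^{(l)}_{i,j}$, and the Lipschitzness of the $A_{i,(a,b)}$ together with the $\mathcal{C}^{1}$-regularity of $(v,z)$ across the window of width $\sim h^{p}$ have been used. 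The two probabilistic terms are then controlled a.s. and uniformly in $n$ by concentration of measure: by Assumption \ref{A:initial condtions}, $\overline{Z}^{(k)}_{i,j}(0)$ is a normalized sum of $N_{h,p}\sim h^{p-1}$ independent bounded random variables with means $(z_{i,j})_0(hl)=(z_{i,j})_0(hk)+O(h^{p})$, and $\overline{M}^{(k)}_{i,j}$ is a martingale whose jumps are of size $\le 1/N_{h,p}$ and whose predictable quadratic variation on $[0,T]$ is $\le CT/N_{h,p}$ (the rates being bounded on the confining interval). Bernstein's inequality, its martingale analogue, a union bound over the polynomially many triples $(k,i,j)$, and the Borel--Cantelli lemma over $n$ --- for which the threshold $\sim\sqrt{N_{h,p}^{-1}\log n}\sim h^{1/2-p/2}\sqrt{\log n}$ is precisely what makes the tail series summable --- give $\sup_{t\le T}\max_{k,i,j}(\,|\overline{Z}^{(k)}_{i,j}(0)-(z_{i,j})_0(hk)|+|\overline{M}^{(k)}_{i,j}(t)|\,)\le C_{\omega,T}\,h^{1/2-p/2}\,|\log h^{-1}|$ on one event of full probability. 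This is the origin of the $h^{1/2-p/2}$ term and of the logarithmic factor.

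Adding the two inequalities yields, on a full-probability event valid simultaneously for every $n$,
\[
\mathcal{V}(t)+\mathcal{Z}(t)\ \le\ K_{\omega,T,\rho}\bigl(h^{p}\vee h^{1/2-p/2}\vee h^{\rho-p}\bigr)\,|\log h^{-1}|\ +\ K\int_0^{t}\bigl(\mathcal{V}(s)+\mathcal{Z}(s)\bigr)\,ds ,\qquad t\in[0,T],
\]
and Gronwall's lemma gives \eqref{E:Main} with $C_{\omega,T,\rho}=K_{\omega,T,\rho}\,e^{KT}$ (the maxima over $(i,j)$ being harmless since $I,J$ are fixed). Besides the oscillatory-term/corrector estimate, the delicate points are the consistency bookkeeping against a merely $\mathcal{C}^{2}$ profile and the requirement that every stochastic estimate be realized on a single $\omega$-set uniformly in $n$ --- which is exactly what forces the factor $|\log h^{-1}|$; the remaining steps are routine Gronwall and Lipschitz manipulations.
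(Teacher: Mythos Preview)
Your overall architecture is sound and close in spirit to the paper's, but the route is genuinely different and your accounting of where the three error scales $h^{p}$, $h^{1/2-p/2}$, $h^{\rho-p}$ come from does not match what your argument would actually produce.

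The paper does \emph{not} compare $V$ to $v$ directly. It introduces an intermediate PDE for $\bar v$ with $\bar z$ (the local average) as coefficient, and then runs two separate comparisons. First, $V^{(k)}\to \bar v(hk)$ is obtained via an explicit corrector \emph{ansatz} $\overline V^{(k)}=\bar v(hk)+h^{2}\chi^{(k)}f(\bar v(hk))$, where $\chi$ solves $D(\chi^{(k+1)}-2\chi^{(k)}+\chi^{(k-1)})=\overline Z^{(k)}-Z^{(k)}$; one plugs $\overline V$ into the discrete equation, computes a residual, and closes by Gronwall between jump times of $Z$ (Lemmas~\ref{L:consistency}--\ref{lemma_Homogenization}). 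Second, $(\bar v,\bar z)\to (v,z)$ is a separate Gronwall estimate (Lemma~\ref{L:2}), which is where the compensated-Poisson concentration (Lemma~\ref{Lemma_spatial_law}) enters and produces $h^{1/2-p/2}|\log h^{-1}|$. Crucially, the term $h^{\rho-p}$ in the paper arises from the finite-difference consistency error for $\bar v$, \emph{not} from any heat-kernel trade-off: because $\bar z$ is only $h^{-p}$-Lipschitz, $\partial_x^{2}\bar v$ is only $\rho$-H\"older with constant of order $h^{-p}$ (Lemma~\ref{L:dx^2_barv}), whence $|h^{-2}\delta^{2}\bar v-\partial_x^{2}\bar v|\le C_{T,\rho}h^{\rho-p}$.

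Your proposal bypasses $\bar v$ and works with $v$ directly. That is legitimate, and in fact your corrector-plus-summation-by-parts estimate on $\int_0^{t}e^{D\Delta_h(t-s)}\mathcal O(s)\,ds$ should yield $O(h^{p})$ up to logarithms (use $|\chi|\lesssim h^{2p}$, $|D_h\chi|\lesssim h^{p}$, the discrete product rule, and $\|\Delta_h p_h(t)\|_{\ell^1}\lesssim t^{-1}$ for $t\gtrsim h^{2}$, then split near $s=t$). The consistency error for $v$ itself is $O(h^{\rho})$ once you establish $\partial_x^{2}v\in C^{\rho}$---which follows from the same parabolic interpolation as in Lemma~\ref{L:dx^2_barv} but with a constant \emph{independent} of $h$, since $z\in C^{1}$. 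So your route, carried out carefully, would give $(h^{p}\vee h^{1/2-p/2}\vee h^{\rho})|\log h^{-1}|$, which is at least as good as the stated bound. Your sentence attributing $h^{\rho-p}$ to ``the trade-off between the order of cancellation extracted and the integrability of the kernel'' is therefore misplaced: nothing in your Duhamel estimate produces that specific power, and the intermediate scale $h^{\rho}$ you invoke plays no clear role. Two smaller points: your claim that the discrete-versus-continuous consistency is $O(h|\log h^{-1}|)$ via random-walk coupling overshoots what $v\in C^{2}$ alone gives---you need $\partial_x^{2}v$ H\"older, which must be proved; and you should say explicitly that the summation by parts is done in space at each fixed $s$, so the time jumps of $Z$ (which the paper tracks carefully through the jump bound $\|\chi_{t_j}-\chi_{t_j^-}\|\le Ch^{-1+p}$ and the count $J_n\le C_\omega h^{-1}$) do not enter your oscillatory estimate.
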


\medskip

Theorem \ref{T:Main} gives a strong convergence result  as $n\to\infty$ for the voltage and the spatial averages of $Z$ for every $p\in[0,1)$. It not only  provides a richer type of convergence compared to the convergence in probability as seen in previous works, but also gives an error bound between the stochastic system and the PDE. 

Note that $p=1/3$ gives the optimal bound in terms of $h$ in \eqref{E:Main}. Below we offer a heuristic explanation of why $p=1/3$ appears and why the term $\left|\log(h^{-1})\right|$ appears, by considering spatial local averages at time $t=0$. 

Fix $(i,j)$ and consider the error $\max_{k\in \{0,1,2,\ldots, n-1\}}|\overline{Z}_{i,j}^{(k)}(0)-z_{i,j}(0,hk)|$ at $t=0$. The term $\left|\log(h^{-1})\right|$ appears naturally as a result of maximizing over $h^{-1}=n$ i.i.d. random variables, according to Assumption \ref{A:initial condtions}. Each of them has the same distribution as $|\overline{Z}_{i,j}^{(0)}(0)-z_{i,j}(0,0)|$ (when $k=0$).

For each $n\in \mathbb{N}$ we fix a positive integer $N_n \in \{1,2,\ldots, n-1\}$ and define the local average
\begin{align*}
S_{N_n}:=\dfrac{1}{N_n}\sum_{k=0}^{N_n}Z_{i,j}^{(k)}(0).
\end{align*}
Then, since $z_{i,j}$ is $ \mathcal{C}^1(\mathbb{S})$,

\begin{align}
    &|S_{N_n} -(z_{i,j})(0,0)|\\ &=\dfrac{1}{N_n}\left|\sum_{k=0}^{N_n}\left(Z_{i,j}^{(k)}(0)-(z_{i,j})(0,hk)\right)+\sum_{k=0}^{N_n}\left((z_{i,j})(0,hk)-(z_{i,j})(0,0)\right)\right|
    \label{E:Avg_Heuristic}\\
    &\leq  \left|\dfrac{1}{N_n}\sum_{k=0}^{N_n}\left(Z_{i,j}^{(k)}(0)-(z_{i,j})(0,hk)\right)\right|+O(N_nn^{-1}) 
    \label{E:Avg_Heuristic2}
\end{align}

The first term goes to $0$ by the law of large numbers and is roughly  of order
$O(N_n^{-1/2})$ by the central limit theorem. This holds
for any choice of the sequence $\{N_n\}$ such that  $N_n \in \{1,2,\ldots, n-1\}$ and $\lim_{n\to\infty}N_n=+\infty$.
The best upper bound of \eqref{E:Avg_Heuristic2} among all such choices of $\{N_n\}$ is of order $O(h^{1/3})=O(n^{-1/3})$, which is obtained when $N_n=O(n^{2/3})$. 

Furthermore, an advantage of using spatial local averages over empirical measures is that the former provides a stronger notion of convergence. To explain this, note that convergence of the empirical measure under the vague topology or under the $H^{-1}$ norm in \citep{austin2008emergence} is achieved through integration with a fixed function that does not depend on $h=L/n$. 
In contrast, our spatial local averages are obtained by integrating the empirical measure with respect to a mollified indicator function over a ball of radius $O(h^p)$, which tends to zero as the number of channels $n=L/h$ tends to infinity, providing information about the ion channels at a higher spatial resolution.

We have a model which lends itself easily to numerical simulation. Thus in Section \ref{S:simulation}, we verify Theorem \ref{T:Main} numerically. Specifically, we can see numerically that \[\sup_{t \in [0,T]}\max_{k \in \{0,1,2,...,n-1\}} |V_{t}^{(k)} - v(t,hk)| \to 0\]  as $h \to 0$. However, it remains unclear that the upper bound we achieve in our theorem is sharp, specifically the exponent for the rate of convergence of $V \to v$. This is in stark contrast to other similar homogenization results where the random medium is not changing in time and the sharpness of the exponent is resolved clearly in numerical experiments; see for example \citep{mcginnis2023}. On the other hand, the sharpness of the rate convergence for the local averages $\overline{Z}$ to the deterministic $z$ is explained by the heuristic argument above.

\section{Proof of strong convergence}
\label{S:Proof}

In the sections below, constants, generically denoted $C$ never depend on $h, k, n$ or $t$ and are not random. Constants denoted $C_\omega$ also do not depend on $h, k, n$ or $t$, but are random variables. Certain constants may also depend on $T$. In these cases, the constants are denoted by $C_T$ or $C_{\omega,T}$.

We shall give the details of our proof first for the simpler model  \eqref{E:V}-\eqref{E:Zkl} in Example \ref{Eg:AllenCahnType}. This is a nontrivial task because this system for each $n\in\mathbb{N}$ has an infinite dimensional state space. The argument can be extended to the more general model in \eqref{eq:general_model V}-\eqref{eq:general_model Z}, since the double sum there consists of analogous terms, but we only carry out the argument for the toy model so the reader does not get lost in the notation.

Without loss of generality we take $L=1$. 
By Assumptions \ref{A:functions g and A} and \ref{A:initial condtions_PDE} and the regularity assumption \eqref{A:regularity},
\begin{equation}\label{A:regularity_toy}
f,g\in\mathcal{C}^{1,1}(\mathbb{R}),\;\alpha,\beta\in \mathcal{C}^{1}(\mathbb{R}),\;z_0\in  \mathcal{C}^1(\mathbb{S};\,[0,1]),\;v_0\in \mathcal{C}^2(\mathbb{S}).  
\end{equation}
We also enforce Assumption \ref{A:initial condtions_Discrete} for the stochastic model. That is,  we suppose 
that, for each $n\in\mathbb{N}$ (hence for each $h=h_n=1/n$), 
$V^{(k)}(0)=v_0(hk)$ for $k\in\{0,1,\cdots,n-1\}$ 
and
$\{Z^{(k)}_0\}_{k=0}^{n-1}$ are independent such that   $Z^{(k)}_0\sim {\rm Bernoulli}(z_0(kh))$, where
${\rm Bernoulli}(q)$ denotes the Bernoulli distribution (being 1 with probability $q$ and being 0 with probability $1-q$). 

\bigskip

We shall show that $\left(V,\,Z\right)$ described by equations \eqref{E:V}-\eqref{E:Zkl} is well approximated by a pair $(v,z)$ that solves the PDE \eqref{eqn_V_pde}-\eqref{eqn_Z_pde} as $n\to\infty$.

Recall from \eqref{Def:localspatial_Z} that
 $\bar{z}_t\in \mathcal{C}(\mathbb{S};\, [0,1])$ is defined by
 \begin{equation*}
    \bar{z}_t(x):=N_{h,p}^{-1}\sum_{k =0}^{n-1}\Phi_{h,p}(x-hk)Z_t^{(k)} \quad x\in \mathbb{S},
\end{equation*} 
and from \eqref{Def:discretizedLA} that
\begin{align*}
\overline{Z}^{(k)}_t:= \bar{z}_t(hk) ,\quad k \in \{0,1,2,\ldots, n-1\}.
\end{align*}

Because $\bar{z}$ is an average, it has better regularity than $\Phi_{h,p}$. Precisely, by \eqref{eq regularity of phi}, there exists a constant $C\in(0,\infty)$ such that for $j \in\{1 ,2\}$ and any  $(v_0,z_0)\in \mathcal{C}(\mathbb{S})\times \mathcal{C}(\mathbb{S};[0,1])$,
\begin{align}
\sup_{t\in\R_+}\|\partial_x^{j}\bar{z} \|_{L^\infty(\mathbb{S})} \leq Ch^{1-p-j} \quad \text{ for all }h,p\in(0,1).
\label{Inq:Regularit z}
\end{align}
Of course for $j=0$, we have the bound $\|\bar{z}\|_{L^\infty([0,T]\times \mathbb{S})} \leq 1$.

Note that the local averages consist of sums of independent Poisson processes. We want to compare these to their means as we did with the random variables in \eqref{E:Avg_Heuristic}.  In order to establish a  uniform rate for the law of large 
numbers on a growing number of sums of Poisson processes, we shall supply the following Lemma.
 
 \begin{lemma}
\label{Lemma_spatial_law}{(Error bound for Poisson Processes)}
Let $\gamma\in(0,\infty)$ be a constant and  $\{n_i\}_{i\geq 1}$ be a sequence of natural numbers such that $\sum_{i=1}^{\infty} n_{i}^{-\gamma}<\infty$. Let $\{\mathcal{N}_{k,i}\}_{1\leq k\leq n_i,\,i\geq 1}$ be a collection of unit rate Poisson processes such that $\{\mathcal{N}_{k,i}\}_{1\leq k\leq n_i}$ are independent for each $i\geq 1$,  and $\{\tau_{k,i}\}_{1\leq k\leq n_i,\,i\geq 1}$ be a collection of non-decreasing random functions $\tau_{k,i}: [0,\infty) \to [0,\infty)$, both defined on the same probability space. Suppose 
for any $T\in(0,\infty)$, there exists a constant $\tau_T\in (0,\infty)$ such that $\sup_{t\in[0,T]}\tau_{k,i}(t) \leq \tau_T$ for all $1\leq k\leq n_i$ and $i\geq 1$ almost surely.
Then
there exists a real-valued random variable $\Gamma_{\omega,T}$ such that 
\begin{equation}\label{E:spatial_law}
\sup_{t \in [0,T]}\left|\sum_{k=1}^{n_i}\Big(\mathcal{N}_{k,i}(\tau_{k,i}(t)) - \tau_{k,i}(t)\Big)\right| \leq \Gamma_{\omega,T} \vee 6\gamma n_i^{1/2}\log(n_i)\qquad \text{for }i\geq 1\quad \P-a.s.
\end{equation}

\end{lemma}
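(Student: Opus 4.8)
The plan is to control each centered Poisson sum $M_i(t) := \sum_{k=1}^{n_i}\bigl(\mathcal{N}_{k,i}(\tau_{k,i}(t)) - \tau_{k,i}(t)\bigr)$ by a martingale concentration argument, using the fact that $t \mapsto \mathcal{N}_{k,i}(\tau_{k,i}(t)) - \tau_{k,i}(t)$ is a (time-changed) compensated Poisson process, hence a martingale with respect to the natural filtration, because each $\tau_{k,i}$ is nondecreasing. First I would fix $i$ and $T$ and work on the event where $\sup_{t\in[0,T]}\tau_{k,i}(t)\le \tau_T$ for all $k$, which holds almost surely by hypothesis. On this event, the total time change across all $n_i$ processes is at most $n_i\tau_T$, so $M_i$ is a mean-zero martingale whose (predictable) quadratic variation on $[0,T]$ is bounded by $n_i \tau_T$. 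The key step is an exponential (Bernstein/Bennett-type) tail bound for compensated Poisson martingales: for a compensated Poisson process $\widetilde N$ with compensator $\Lambda$, one has $\mathbb{P}\bigl(\sup_{s\le \Lambda}|\widetilde N(s)| \ge \lambda\bigr) \le 2\exp\bigl(-\Lambda\,\psi(\lambda/\Lambda)\bigr)$ with $\psi(u)=(1+u)\log(1+u)-u$, and after summing the independent increments the same bound holds for $M_i$ with $\Lambda$ replaced by $n_i\tau_T$. Choosing $\lambda = 6\gamma\, n_i^{1/2}\log n_i$ makes the right-hand side summable in $i$: indeed for $n_i$ large the bound behaves like $2\exp(-c\,\lambda^2/(n_i\tau_T)) = 2\exp(-c'\,\gamma^2\log^2 n_i)$, which is $o(n_i^{-\gamma})$, so $\sum_i \mathbb{P}(\sup_{[0,T]}|M_i| \ge 6\gamma n_i^{1/2}\log n_i) < \infty$.

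Next I would invoke Borel--Cantelli: almost surely, $\sup_{t\in[0,T]}|M_i(t)| \le 6\gamma\, n_i^{1/2}\log n_i$ for all but finitely many $i$. To absorb the finitely many exceptional indices into a single bound valid for \emph{all} $i$, I set
\[
\Gamma_{\omega,T} := \sup_{i\ge 1}\ \sup_{t\in[0,T]}\bigl|M_i(t)\bigr|\, \mathbf{1}\!\left\{\sup_{t\in[0,T]}|M_i(t)| > 6\gamma n_i^{1/2}\log n_i\right\},
\]
which is an almost surely finite random variable (a supremum of finitely many finite quantities, since each $\sup_{[0,T]}|M_i|$ is a.s.\ finite and only finitely many indices contribute). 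Then for every $i\ge 1$ either $i$ is not exceptional and $\sup_{[0,T]}|M_i|\le 6\gamma n_i^{1/2}\log n_i$, or $i$ is exceptional and $\sup_{[0,T]}|M_i| \le \Gamma_{\omega,T}$; in both cases $\sup_{[0,T]}|M_i| \le \Gamma_{\omega,T}\vee 6\gamma n_i^{1/2}\log n_i$, which is exactly \eqref{E:spatial_law}. A measurability remark: the condition $\sum_i n_i^{-\gamma}<\infty$ forces $n_i\to\infty$, so the threshold $6\gamma n_i^{1/2}\log n_i$ genuinely grows, and one should also note that $\Gamma_{\omega,T}$ is nondecreasing in $T$, which is all that is used later.

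The main obstacle is getting a clean, genuinely dimension-free exponential inequality for the sum of the $n_i$ time-changed compensated Poisson processes \emph{uniformly in $t$}, since the $\tau_{k,i}$ are random and only assumed nondecreasing and bounded, not continuous or deterministic. The resolution is that, because $\tau_{k,i}$ is nondecreasing, $t\mapsto \mathcal{N}_{k,i}(\tau_{k,i}(t))-\tau_{k,i}(t)$ is still an $\{\mathcal{F}_t\}$-martingale (an optional time change of the compensated Poisson martingale), so Doob's maximal inequality applies to the exponential submartingale $e^{\theta M_i(t)}$ and the supremum over $t\in[0,T]$ costs nothing beyond the value at the endpoint. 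Optimizing $\theta$ against the bound on the quadratic variation $n_i\tau_T$ then yields the Bernstein-type estimate above; the constant $6$ in front of $\gamma n_i^{1/2}\log n_i$ is chosen generously precisely so that the resulting exponent dominates $\gamma \log n_i$ for all sufficiently large $n_i$ regardless of the value of $\tau_T$ hidden in the random event, after which one only needs $\tau_T<\infty$ a.s.\ rather than a deterministic bound.
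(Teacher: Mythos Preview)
Your overall architecture---an exponential tail bound on each $\sup_{t\in[0,T]}|M_i(t)|$, summability in $i$, Borel--Cantelli, then absorbing the finitely many exceptional indices into $\Gamma_{\omega,T}$---is exactly the paper's. The route to the tail bound differs: the paper does not use the martingale/Doob framework. Instead it replaces $\sup_{t\in[0,T]}|M_i(t)|$ by $\sup_{s\in[0,\tau_T]}\bigl|\sum_{k=1}^{n_i}\widetilde{\mathcal N}_{k,i}(s)\bigr|$ with all compensated processes evaluated at a \emph{common deterministic} time $s$, observes that this sum has the law of a single compensated Poisson process at time $n_i s$, then applies Etemadi's maximal inequality (not Doob's) followed by Markov's inequality on the moment generating function with the explicit choices $y_i=n_i^{-1/2}$ and $\sigma_i=\gamma\, n_i^{-1/2}\log n_i$. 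Your Bernstein route is slicker once the martingale structure is in place; the paper's is more elementary and never asserts that the time-changed process is a martingale.

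There is, however, a genuine gap in your argument. The claim that $t\mapsto \mathcal N_{k,i}(\tau_{k,i}(t))-\tau_{k,i}(t)$ is a martingale ``because $\tau_{k,i}$ is nondecreasing'' is not justified by the hypotheses. For an optional time change of a martingale to remain a martingale one needs each $\tau_{k,i}(t)$ to be a \emph{stopping time} with respect to the Poisson filtration; the lemma only assumes the $\tau_{k,i}$ are nondecreasing random functions on the same probability space, with no adaptedness condition at all. Monotonicity alone is insufficient: taking $\tau_{k,i}(t)$ for $t\ge 1$ to be the (random, anticipating) time in $[0,\tau_T]$ at which $s\mapsto\widetilde{\mathcal N}_{k,i}(s)$ is maximized gives $M_i(1)=\sum_{k=1}^{n_i}\max_{s\le\tau_T}\widetilde{\mathcal N}_{k,i}(s)$, which has strictly positive mean and grows like a constant times $n_i$, so neither the martingale property nor the bound $O(n_i^{1/2}\log n_i)$ can hold. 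In the paper's application the time changes \emph{are} adapted, and then your argument goes through; but as a proof of the lemma as stated you must add this hypothesis before invoking Doob and the exponential submartingale.
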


The proof of Lemma \ref{Lemma_spatial_law} will be given in the Appendix.
Here is how Lemma \ref{Lemma_spatial_law} will be used in this paper. Let $p \in [0,1)$ be fixed. Below, in \eqref{E:Uses_Lemma1_2}, we require that there exists an almost surely finite $C_{\omega,T}$ such that
for all $n \in \mathbb{N}$, 
\begin{align}
\sup_{k\in\{1,2,\cdots,n\}}\sup_{t \in [0,T]}\left|\overline{Z}^{(k)}-N_{h,p}^{-1}\sum_{j \in \mathbb{S}_n}\Phi_{h,p}(hk-hj)(Z_0^{(k)}+\tau^+_{k,n}-\tau^+_{k,n})\right| \leq C_{\omega,T} n^{p/2-1/2}\log(n)
\end{align}
with probability one.
Here $\tau_{k,n}^+(t):=\int _0^t\alpha(V^{(k)})(1-Z^{(k)})ds$ and $\tau_{k,n}^-(t):=\int_0^t\beta(V^{(k)})Z^{(k)}ds$. Thus  there exists a constant $\tau=\tau_T>0$ such that $\sup_{t \in[0,T]}\tau^+_{k,n}\vee \tau^-_{k,n} \leq \tau$ 
for all $n \in \mathbb{N}$ and $k \in \{1,2,3,\ldots, n\}$. We take as the sequence for the $n_i$ in Lemma \ref{Lemma_spatial_law} the following sequence:  $N_{1,p},N_{1/2,p},N_{1/2,p},N_{1/3,p},N_{1/3,p},N_{1/3,p},\ldots$. Recalling from \eqref{E: Size of N} that $N_{h,p}\sim O(n^{1-p})$, we are required to choose a $\gamma$ such that 
\[\sum_{n=1}^\infty \sum_{k=1}^{n}n^{-\gamma (1-p)} =\sum_{n=1}^\infty n^{1-\gamma (1-p)}<\infty.\]
Thus for our application, when $n \in \{1,2,3,\ldots,\}$ we can take any $\gamma $ such that $\gamma >2/(1-p)$. 

Let us now give an interpretation of the $C_{\omega,T,\rho}$ that appears Theorem \ref{T:Main}. Rather than considering $n\in \{1,2,3,\ldots\}$, we could also consider a more generic sequence $n \in\{n_1,n_2,n_3,\cdots\}$  with $\gamma$ such that 
\[\sum_{j=1}^\infty n_j^{1-\gamma (1-p)}<\infty.\] (Here $n_i$ is being used for the number of ion channels, not as the sequence in Lemma \eqref{Lemma_spatial_law}). Then the requirement for $\gamma$ depends on the generic sequence.  We can regard \eqref{E:V} and \eqref{E:Zkl} as a sequence of models indexed by $i$ with compartment number parameter $n \in \{n_1,n_2,n_3, \ldots \}$. Physically we may imagine this sequence as a given infinite sample coming from a population whose behavior is described by the model. For the $i$-th sample, we have $n_i$ spatial averages, that is $\{\overline{Z}^{(k)}\}_{k=1}^{n_i}$, each with roughly $n_i^{1-p}$ addends, that we hope to converge to $z$ as $i \to \infty.$ For this sequence of samples we obtain a finite random variable $\Gamma_{\omega,T}$ as long as $\lim_{i \to \infty}n_i\to \infty$ not too slowly. 

\medskip
\noindent
{\bf Intermediate PDE. } 
We derive strong convergence of $V \to v$ by way of an intermediate problem. We first show that $V^{(k)}$ is approximated by $\bar{v}(hk)$, where $\bar{v}$ solves the intermediate PDE
\begin{equation}\label{eqn_intermediate_avg}
\frac{\partial\bar{v}_t(x)}{\partial t}=\, D\Delta \bar{v}_t \,+\,\bar{z}_t(x)\,f(\bar{v}_t) -g(\bar{v}_t), 
\end{equation}
where $\bar{z}$ is defined in \eqref{Def:localspatial_Z}.

We rewrite the PDE using Duhammel's formula as 
\begin{align}
    \bar{v}_t(x)=e^{D \Delta t}v_0+\int_0^{t}e^{D \Delta (t-s)}(\bar{z}_sf(\bar{v}_s)-g(\bar{v}_s))ds,
    \label{eq:Duhml}
\end{align}
where we define the fundamental operator acting on functions $w \in L^{\infty}([0,T]\times\mathbb{S})$ as
\begin{align}\label{E:barv_Duhamel}
e^{D \Delta t}w(t,x) := \int_{\mathbb{S}}p^{\mathbb{S}}(2Dt,x,y)w(t,y)dy
\end{align}
where $p^{\mathbb{S}}$ is defined in \eqref{densityBM_S}. 

Precisely,
$\bar{v}$ is the unique element in $\mathcal{C}(\R_+\times \mathbb{S})$ that satisfies \eqref{eq:Duhml} for all $(t,x)\in \R_+\times \mathbb{S}$. This unique element exists by a fixed point argument, because $\bar{z}_t\in \mathcal{C}(\mathbb{S};\, [0,1])$ are bounded for all $t\in\R_+$ and $\{f,g\}$ satisfies Assumption \eqref{A:regularity_toy}. A Gronwall-type argument also gives that there exist constants $C_1,\,C_2\in(0,\infty)$ such that for all $T\in(0,\infty)$, 
\begin{equation}
    \sup_{h\in (0,1)} \sup_{p \in (0,1)}\|\bar{v} \|_{L^\infty([0,T]\times \mathbb{S})} \leq C_1e^{C_2T}.
    \label{inq_v_bar}
\end{equation}

First, we obtain a regularity estimate of $\bar{v}$ using  \eqref{Inq:Regularit z}.

\begin{lemma}\label{L:partialx_barv}
 Let $v_0 \in \mathcal{C}^2(\mathbb{S})$, and suppose $f,g \in \mathcal{C}^1(\mathbb{R})$ and are globally Lipschitz. Then for each $T \in (0,\infty)$ there exists constants $C_1,C_2$ independent of $T$ and a constant $C_T$ such that for all $p\in[0,1)$, and $h\in(0,1)$,  
\begin{align}
\|\partial_x^{j}\bar{v} \|_{L^\infty([0,T]\times\mathbb{S})}
    \leq
    \begin{cases}
    C_1e^{C_2T} &j \in\{0,1\}
    \\
    h^{-p}C_T & j = 2    \end{cases}
\end{align}
and
\begin{equation}
\|\partial_t \bar{v} \|_{L^\infty([0,T]\times\mathbb{S})} \leq h^{-p}C_T.
    \label{inq_time_derv}
\end{equation}
\end{lemma}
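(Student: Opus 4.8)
The plan is to work directly from the Duhamel representation \eqref{eq:Duhml} and differentiate in $x$, passing the spatial derivatives onto the heat kernel $p^{\mathbb{S}}(2Dt,x,y)$ or onto $\bar{z}$, whichever is more convenient, and then close the resulting estimates with a Gronwall-type argument. First I would record the $j=0$ bound, which is just \eqref{inq_v_bar} restated, and note that $f,g$ globally Lipschitz together with $\|\bar z\|_\infty\le 1$ makes the nonlinearity $w\mapsto \bar z f(w)-g(w)$ globally Lipschitz on $L^\infty(\mathbb{S})$ with a constant independent of $h,p$. For $j=1$: differentiate \eqref{eq:Duhml} in $x$, using $\partial_x\big(e^{D\Delta t}v_0\big)=e^{D\Delta t}v_0'$ (legitimate since $v_0\in\mathcal C^2\subset\mathcal C^1$) and, for the Duhamel integral, $\partial_x\big(e^{D\Delta(t-s)}\psi\big)=e^{D\Delta(t-s)}\partial_x\psi$ whenever $\psi\in\mathcal C^1(\mathbb{S})$. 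Here $\psi_s=\bar z_s f(\bar v_s)-g(\bar v_s)$, and $\partial_x\psi_s=(\partial_x\bar z_s)f(\bar v_s)+\bar z_s f'(\bar v_s)\partial_x\bar v_s-g'(\bar v_s)\partial_x\bar v_s$. The first term is bounded by $Ch^{1-p}\le C$ using \eqref{Inq:Regularit z} with $j=1$ (so it is harmless, in fact $o(1)$), and the remaining two terms are bounded by $C\,\|\partial_x\bar v_s\|_\infty$ using the Lipschitz bounds on $f,g$ and $\|\bar v\|_\infty\le C_1e^{C_2T}$ to control $f',g'$ on the relevant compact range. Since $\|v_0'\|_\infty$ is a fixed constant and the heat semigroup is an $L^\infty$ contraction, Gronwall in $t$ then yields $\|\partial_x\bar v\|_{L^\infty([0,T]\times\mathbb{S})}\le C_1e^{C_2T}$ with constants independent of $h,p$.

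For $j=2$ the loss of $h^{-p}$ enters: differentiating once more, $\partial_x^2\psi_s$ now contains the term $(\partial_x^2\bar z_s)f(\bar v_s)$, which by \eqref{Inq:Regularit z} with $j=2$ is only bounded by $Ch^{-1-p}\cdot h$... wait — more precisely $\|\partial_x^2\bar z\|_\infty\le Ch^{1-p-2}=Ch^{-1-p}$, so this term alone would be $O(h^{-1-p})$, which is worse than the claimed $h^{-p}$. So I would instead handle the second derivative more carefully: keep one derivative on the kernel and one on $\psi$. That is, write $\partial_x^2\big(e^{D\Delta(t-s)}\psi_s\big)=\partial_x\int_{\mathbb{S}}\partial_x p^{\mathbb{S}}(2D(t-s),x,y)\,\psi_s(y)\,dy$, hmm, this still eventually needs a derivative landing somewhere. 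The cleaner route: use the smoothing estimate $\|\partial_x e^{D\Delta \sigma}\phi\|_{L^\infty(\mathbb{S})}\le C\sigma^{-1/2}\|\phi\|_{L^\infty(\mathbb{S})}$ for the heat kernel on the circle, so that $\partial_x^2\big(e^{D\Delta(t-s)}\psi_s\big)=\partial_x e^{D\Delta(t-s)}(\partial_x\psi_s)$ is bounded by $C(t-s)^{-1/2}\|\partial_x\psi_s\|_\infty$. Then $\int_0^t (t-s)^{-1/2}\,ds<\infty$ is integrable, and $\|\partial_x\psi_s\|_\infty\le Ch^{1-p}+C\|\partial_x\bar v_s\|_\infty\le C$ by the $j=1$ bound. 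Finally $\partial_x^2\big(e^{D\Delta t}v_0\big)=e^{D\Delta t}v_0''$ is bounded by the fixed constant $\|v_0''\|_\infty$ since $v_0\in\mathcal C^2$. The only place $h^{-p}$ can possibly appear is... actually with this route it does not — one gets $\|\partial_x^2\bar v\|_\infty\le C_T$. Since the lemma only claims the weaker bound $h^{-p}C_T$, this is fine (one may simply bound $C_T\le h^{-p}C_T$ for $h\in(0,1),p\ge 0$); but I would state and prove the sharper $C_T$ bound and then weaken it, or alternatively just derive the $h^{-p}C_T$ bound directly by the first (cruder) route with the $(\partial_x^2\bar z)f$ term handled via the smoothing estimate applied to $\partial_x$ on the kernel, landing a single derivative on $\partial_x\bar z$ which is $O(h^{1-p})=o(1)$, hence definitely $\le h^{-p}C_T$.

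For the time derivative \eqref{inq_time_derv}, the cleanest argument is to use the PDE itself rather than differentiating Duhamel: from \eqref{eqn_intermediate_avg}, $\partial_t\bar v=D\,\partial_x^2\bar v+\bar z f(\bar v)-g(\bar v)$, so $\|\partial_t\bar v\|_{L^\infty([0,T]\times\mathbb{S})}\le D\,\|\partial_x^2\bar v\|_{L^\infty([0,T]\times\mathbb{S})}+\|\bar z f(\bar v)-g(\bar v)\|_{L^\infty([0,T]\times\mathbb{S})}\le D\,h^{-p}C_T+C_1e^{C_2T}\le h^{-p}C_T$, using the $j=2$ bound just established and that $f,g$ are bounded on the compact range of $\bar v$. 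One must first justify that $\bar v$ is classically twice differentiable in $x$ and once in $t$ so that the PDE form is legitimate; this follows from the regularity $v_0\in\mathcal C^2$, $\bar z\in\mathcal C^\infty$ in $x$, and the $j=1,2$ estimates above via standard parabolic bootstrapping, or one can avoid it entirely by differentiating the Duhamel formula \eqref{eq:Duhml} in $t$ directly (splitting $\partial_t$ of the forcing integral into the boundary term $\psi_t(x)$ plus $\int_0^t\partial_t e^{D\Delta(t-s)}\psi_s\,ds$, with $\partial_t e^{D\Delta\sigma}=D\Delta e^{D\Delta\sigma}$ absorbing a factor $\sigma^{-1}$ that is not integrable — so the PDE route is preferable).

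The main obstacle I anticipate is bookkeeping the $h$-dependence precisely enough to land exactly on $h^{-p}$ (and not $h^{-1-p}$) for the second spatial derivative: the naive estimate that puts both derivatives on $\bar z$ is too lossy, so one is forced to exploit the parabolic smoothing $\|\partial_x e^{D\Delta\sigma}\phi\|_\infty\le C\sigma^{-1/2}\|\phi\|_\infty$ to distribute derivatives between the kernel and $\bar z$, and to check that the resulting time-integral $\int_0^t(t-s)^{-1/2}ds$ converges uniformly in $T$ on compacts. Everything else — the Lipschitz-plus-Gronwall closing for $j=0,1$, and deducing $\partial_t\bar v$ from the PDE — is routine.
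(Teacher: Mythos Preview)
Your plan has a genuine gap rooted in a misreading of \eqref{Inq:Regularit z}. That estimate reads $\|\partial_x^{j}\bar z\|_\infty\le Ch^{1-p-j}$, so for $j=1$ it gives $\|\partial_x\bar z\|_\infty\le Ch^{-p}$, \emph{not} $Ch^{1-p}$ as you write. (You quote it correctly for $j=2$, getting $Ch^{-1-p}$, but slip for $j=1$.) This is not cosmetic, because it breaks both of your spatial-derivative estimates.

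For $j=1$, your strategy passes $\partial_x$ entirely through the semigroup onto $\psi_s$, producing the term $(\partial_x\bar z_s)f(\bar v_s)$, which is only $O(h^{-p})$. Feeding that into Gronwall yields $\|\partial_x\bar v\|_{L^\infty([0,T]\times\mathbb S)}\le C_T h^{-p}$, not the claimed $h$-independent bound $C_1e^{C_2T}$. The paper avoids this by using the parabolic smoothing estimate already at $j=1$: keep one derivative on the kernel and $j-1=0$ on $w$, so that
\[
\|\partial_x\bar v_t\|_\infty \le \|v_0'\|_\infty + C\int_0^t (t-s)^{-1/2}\,\|\bar z_s f(\bar v_s)-g(\bar v_s)\|_\infty\,ds,
\]
which is controlled by the $j=0$ bound alone and is uniform in $h,p$. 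This is precisely the trick you invoke for $j=2$; you just need to apply it one level earlier as well. For $j=2$, the same misreading led you to the over-optimistic conclusion that you actually get the sharper bound $C_T$ independent of $h$. With the corrected $\|\partial_x\psi_s\|_\infty\le Ch^{-p}+C\|\partial_x\bar v_s\|_\infty$, your smoothing-estimate argument lands exactly on the stated $h^{-p}C_T$, so there is no slack.

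Your argument for $\partial_t\bar v$ via the PDE \eqref{eqn_intermediate_avg} is correct and matches the paper.
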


\begin{remark}
In many situations with more specific information about $f$ and $g$, the bound can be made smaller in terms of $T$. For example, in cases where $\bar{v}$ is bounded independent of $T$, $C_T$ depends algebraically on $T.$
\end{remark}

\begin{proof}
The case for $j=0$ follows from the discussion before \eqref{inq_v_bar}. The bound for $\partial_t\bar{v}$ then follows from \eqref{eqn_intermediate_avg} and the case when $j=2$ as well as the fact that $f$ and $g$ are Lipschitz. We now address the cases of $j=1,2.$ 

For any $T\in(0,\infty)$ and $w\in L^\infty([0,T]\times \mathbb{S})$ we let
\begin{align}
  W(t,x):= \int_{0}^te^{D \Delta (t-s)}w(s,x)ds,\quad (t,x)\in [0,T]\times \mathbb{S}.
\end{align}

Suppose, furthermore, $\partial_x^{j-1}w$ exists and lies in $L^\infty([0,T]\times \mathbb{S})$. Then by integration by parts and by using the symmetry $\partial_xp^{\mathbb{S}}(2Dt,x,y)=-\partial_yp^{\mathbb{S}}(2Dt,x,y)$, we obtain that
\begin{align}
  \partial_x^{j}W(t,x):= \int_{0}^t\int_{\mathbb{S}}\partial_xp^{\mathbb{S}}(2D(t-s),x,y) \partial ^{j-1}_yw(t,y)dyds.
  \label{Def:W}
\end{align}
Applying Young's convolution inequality to the right hand side and the heat kernel estimate 
$$\sup_{x,y\in\mathbb{S}}\int_{\mathbb{S}}\Big|\partial_xp^{\mathbb{S}}(2Dt,x,y) \Big|\,dy\leq \frac{C}{\sqrt{t}} \quad \text{for all }t\in(0,\infty),$$
where $C\in(0,\infty)$ is a constant, we obtain
\begin{align}
\|\partial_x^{j}W\|_{L^\infty([0,T]\times \mathbb{S})} \leq CT^{1/2}\|\partial_x^{j-1}w\|_{ L^\infty([0,T]\times \mathbb{S})}.
\end{align}

In view of \eqref{eq:Duhml} and \eqref{E:barv_Duhamel}, we shall take $w(s,x)=\bar{z}_s(x)f(\bar{v}_s(x))-g(\bar{v}_s(x))$. In order that 
$\partial_x^{j-1}w\in L^\infty([0,T]\times \mathbb{S})$ for this particular $w$, we need that the $(j-1)^{\text{th}}$ derivative of $f$ and $g$ are bounded in a compact interval. Specifically, letting $I_T:=[-\|\bar{v}\|_{L^\infty([0,T]\times \mathbb{S})},\|\bar{v}\|_{L^\infty([0,T]\times \mathbb{S})}]$, it is enough to take $f,g \in \mathcal{C}^{j-1}(\mathbb{R}) \subseteq \mathcal{C}^{j-1}(I_T)$. This gives us an $L^\infty([0,T]\times\mathbb{S})$ bound on the derivatives of $\bar{v}$ in terms of $\|\bar{v}\|_{L^\infty([0,T]\times \mathbb{S})}$ after differentiating \eqref{eq:Duhml} to the desired degree, using \eqref{Inq:Regularit z}, and using the assumption that $v_0 \in \mathcal{C}^2(\mathbb{S})$. Taking $\partial_x^{j}$  of \eqref{eq:Duhml} with $j=1$  first proves the case for $j=1$. Then the $j=2$ case is proved using the $j=1$ case.

The declared estimate for the time derivative $\|\partial_t \bar{v} \|_{L^\infty(\mathbb{S})}$ follows from the case $j=2$.
\end{proof}

It is in fact possible to establish higher regularity for $\bar{v}$ in the sense of Hölder continuity using interpolation theory for operator norms. Specifically, after introducing the interpolation parameter $\rho$ and interpolating between $\mathcal{C}^1$ and $\mathcal{C}^2$ norms of the heat kernel, we obtain the following Lemma.
\begin{lemma}
\label{L:dx^2_barv}
Let  $f,g \in \mathcal{C}^1(\mathbb{R})$ be globally Lipschitz and $v_0 \in \mathcal{C}^2(\mathbb{S})$. For each $\rho \in (0,1)$ and ecah $T\in(0,\infty)$, there exists constants $C_{\rho,T}$ such that 
\begin{align*}
    \sup_{t \in [0,T]}| \partial_x^2\bar{v}(x_1)-\partial_x^2\bar{v}(x_2)| \leq h^{-p}C_{\rho,T} |x_1-x_2|^\rho 
\end{align*}
for all $x_1,x_2 \in \mathbb{S}$, $p\in[0,1)$, and $n\in \mathbb{N}$.
\end{lemma}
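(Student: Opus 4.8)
The plan is to differentiate the Duhamel representation \eqref{eq:Duhml} twice in $x$ and to control the spatial increment of $\partial_x^2\bar v$ by interpolating two heat--kernel bounds, one at the level of $\partial_x p^{\mathbb{S}}$ and one at the level of $\partial_x^2 p^{\mathbb{S}}$. Write $w_s:=\bar z_s\,f(\bar v_s)-g(\bar v_s)$, so that $\bar v_t=e^{D\Delta t}v_0+W(t,\cdot)$ with $W(t,x)=\int_0^t e^{D\Delta(t-s)}w_s(x)\,ds$. Since $f,g\in\mathcal{C}^1(\mathbb{R})$ are globally Lipschitz and $\bar v$ is bounded on $[0,T]\times\mathbb{S}$ by \eqref{inq_v_bar}, and since $\|\bar z\|_{L^\infty}\le 1$, $\|\partial_x\bar z\|_{L^\infty}\le Ch^{-p}$ by \eqref{Inq:Regularit z}, and $\|\partial_x\bar v\|_{L^\infty([0,T]\times\mathbb{S})}\le C_1 e^{C_2 T}$ by Lemma \ref{L:partialx_barv}, the product rule gives
\[
\|\partial_x w\|_{L^\infty([0,T]\times\mathbb{S})}\;\le\;h^{-p}\,C_T,
\]
the single factor $h^{-p}$ arising only from the $\partial_x\bar z$ term. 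By formula \eqref{Def:W} with $j=2$ we then have $\partial_x^2 W(t,x)=\int_0^t\!\int_{\mathbb{S}}\partial_x p^{\mathbb{S}}(2D(t-s),x,y)\,\partial_y w(s,y)\,dy\,ds$, so it remains to estimate the kernel increment and the initial--data term.

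The heart of the argument is an interpolated kernel estimate. For $\sigma>0$ and $x_1,x_2\in\mathbb{S}$, set $\Theta_\sigma:=\int_{\mathbb{S}}\bigl|\partial_x p^{\mathbb{S}}(2D\sigma,x_1,y)-\partial_x p^{\mathbb{S}}(2D\sigma,x_2,y)\bigr|\,dy$. On the one hand, $\Theta_\sigma\le 2\sup_x\int_{\mathbb{S}}|\partial_x p^{\mathbb{S}}(2D\sigma,x,y)|\,dy\le C\sigma^{-1/2}$, the heat--kernel bound already used in the proof of Lemma \ref{L:partialx_barv}. On the other hand, writing $\partial_x p^{\mathbb{S}}(2D\sigma,x_1,y)-\partial_x p^{\mathbb{S}}(2D\sigma,x_2,y)$ as the integral of $\partial_x^2 p^{\mathbb{S}}(2D\sigma,\xi,y)$ along the shorter arc from $x_2$ to $x_1$ and using the companion estimate $\sup_x\int_{\mathbb{S}}|\partial_x^2 p^{\mathbb{S}}(2D\sigma,x,y)|\,dy\le C\sigma^{-1}$ gives $\Theta_\sigma\le C\,|x_1-x_2|\,\sigma^{-1}$. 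Taking the geometric mean of the two bounds with weights $1-\rho$ and $\rho$ yields
\[
\Theta_\sigma\;\le\;C\,|x_1-x_2|^{\rho}\,\sigma^{-(1+\rho)/2}\qquad\text{for all }\sigma>0.
\]

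Inserting this into the formula for $\partial_x^2 W$ and using $\|\partial_x w\|_{L^\infty([0,T]\times\mathbb{S})}\le h^{-p}C_T$ gives
\[
|\partial_x^2 W(t,x_1)-\partial_x^2 W(t,x_2)|\;\le\;h^{-p}C_T\,|x_1-x_2|^{\rho}\int_0^t (t-s)^{-(1+\rho)/2}\,ds,
\]
and $\int_0^t (t-s)^{-(1+\rho)/2}\,ds=\tfrac{2}{1-\rho}\,t^{(1-\rho)/2}\le\tfrac{2}{1-\rho}\,T^{(1-\rho)/2}$, which is finite precisely because $\rho<1$ --- this is the only point where the restriction on $\rho$ enters. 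Hence the Duhamel contribution is at most $h^{-p}C_{\rho,T}|x_1-x_2|^{\rho}$. The initial--data term is handled in the same spirit and more easily: since $v_0\in\mathcal{C}^2(\mathbb{S})$, $\partial_x^2(e^{D\Delta t}v_0)=e^{D\Delta t}\partial_x^2 v_0$ inherits the spatial regularity of $\partial_x^2 v_0$ uniformly in $t$ (the heat semigroup is non-expansive on moduli of continuity), contributing at most $C_{\rho,T}|x_1-x_2|^{\rho}$, which is absorbed into $h^{-p}C_{\rho,T}$ since $h<1$. Adding the two contributions and taking $\sup_{t\in[0,T]}$ gives the claim, with $C_{\rho,T}$ independent of $h,p,n$. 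The main obstacle is precisely this interpolated kernel estimate and the verification that the induced temporal singularity $(t-s)^{-(1+\rho)/2}$ is integrable exactly for $\rho\in(0,1)$, while keeping careful track of the single blow-up factor $h^{-p}$ coming from $\|\partial_x\bar z\|_{L^\infty}\le Ch^{-p}$; the remaining bookkeeping is a routine application of the heat--kernel bounds and Lemma \ref{L:partialx_barv}.
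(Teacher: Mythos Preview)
Your proposal is correct and follows essentially the same route as the paper: both proofs differentiate the Duhamel formula to write $\partial_x^2 W(t,x)=\int_0^t\int_{\mathbb{S}}\partial_x p^{\mathbb{S}}(2D(t-s),x,y)\,\partial_y w(s,y)\,dy\,ds$, invoke the interpolated kernel bound $\int_{\mathbb{S}}|\partial_x p^{\mathbb{S}}(2D\sigma,x_1,y)-\partial_x p^{\mathbb{S}}(2D\sigma,x_2,y)|\,dy\le C_\rho|x_1-x_2|^\rho\sigma^{-(1+\rho)/2}$, and combine it with $\|\partial_x w\|_{L^\infty}\le h^{-p}C_T$ from \eqref{Inq:Regularit z} and Lemma~\ref{L:partialx_barv}. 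The paper simply states this kernel estimate, whereas you supply its derivation by interpolating between the $L^1$ bounds on $\partial_x p^{\mathbb{S}}$ and $\partial_x^2 p^{\mathbb{S}}$; and you treat the initial-data term explicitly rather than referring back to Lemma~\ref{L:partialx_barv}, but the substance is identical.
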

\begin{proof}
The proof relies on \eqref{Inq:Regularit z} and is similar to the proof of Lemma \ref{L:partialx_barv}. However, it requires a more delicate heat kernel estimate. Namely we will use the fact that for all $\rho\in (0,1)$, there exists a constant $C_\rho\in(0,\infty)$ such that
\begin{equation}\int_{\mathbb{S}}\left|\partial_xp^{\mathbb{S}}(2Dt,x_1,y)-\partial_xp^{\mathbb{S}}(2Dt,x_2,y)\right|dy \leq \dfrac{C_\rho|x_1-x_2|^{\rho}}{t^{1/2+\rho/2}}
\end{equation}
for all $x_1,x_2\in \mathbb{S}$ and $t\in (0,\infty)$. Now we use the estimate with \eqref{Def:W}  in the following way:
\begin{align}
\begin{aligned}
  &\left|\partial_x^{j}W(t,x_1)- \partial_x^{j}W(t,x_2)\right| \\&\leq \int_{0}^t\int_{\mathbb{S}}\left|\partial_xp^{\mathbb{S}}(2D(t-s),x_1,y) 
  -\partial_xp^{\mathbb{S}}(2D(t-s),x_2,y)\right|\left|
  \partial^{j-1}_yw(s,y)\right|dyds
  \\
  &\leq C_{\rho}|x_1-x_2|^{\rho}\,t^{1/2-\rho/2}\|\partial_x^{j-1}w\|_{L^{\infty}([0,T]\times\mathbb{S})}.
 \end{aligned}
\end{align}
Now the proof follows from that of Lemma \ref{L:partialx_barv}. 

\end{proof}
Lemma \ref{L:partialx_barv} and Lemma \ref{L:dx^2_barv} give us the following gradient and error estimates. There exists a constants $C_1$, $C_2$, $C_T$ and $C_{T,\rho}$ such that for all $k \in \{0,1,2,\ldots,n-1\}$ we have for all $h,p\in(0,1)$, $\rho\in(p,1)$ and $T\in(0,\infty)$,
\begin{align}
    \sup_{t \in [0,T]}|\bar{v}(hk)-\bar{v}(hk-h)| \leq & C_1e^{c_2T}h
    \label{eq one difference}
    \\
     \sup_{t \in [0,T]}|\bar{v}(hk+h)-2\bar{v}(hk)+\bar{v}(hk-h)| \leq & C_Th^{2-p}
     \label{eq two difference}
     \\
     \sup_{t \in [0,T]}\left|\dfrac{\bar{v}(hk+h)-2\bar{v}(hk)+\bar{v}(hk-h)}{h^2}-\partial^2_x\bar{v}(hk)\right| \leq & C_{T,\rho} h^{\rho-p}.
     \label{eq 4 difference}
\end{align}

\medskip
\noindent
{\bf Approximate Ansatz and Bounds for $\chi$. } 
Next, we show that $V^{(k)}$ is well-approximated by $\bar{v}(hk)$ where $\bar{v}$ solves the intermediate PDE
\eqref{eqn_intermediate_avg}. 

To do this, we shall construct an approximate ansatz $\overline{V}$ for $V=(V^{(k)})_{k=0}^{n-1}$ that enables us to quantify the error between $V$ and $\bar{v}$.
The approximate ansatz $\overline{V}$ for $V$ is given by 
\begin{equation}
\overline{V}^{(k)}:=\bar{v}(hk)+h^2\chi^{(k)}f(\bar{v}(hk)) \quad \text{for } k\in \{0,1,\cdots, n-1\},
\end{equation}
where $f$ is the same function in our model \eqref{E:V}, and, for each $t\in \R_+$, $\{\chi^{(k)}_t\}_{k=0}^{n-1}$  solves 
\begin{equation}
D(\chi^{(k+1)}_t-2\chi^{(k)}_t+\chi^{(k-1)}_t)= \overline{Z}_t^{(k)}-Z_t^{(k)}.
\label{eqn_chi}
\end{equation}
Note the left hand side is a discrete Laplacian scaled by $D$ with periodic boundary conditions. We can view the left hand side of \eqref{eqn_chi} as an $n \times n$ matrix of rank $n-1$.  Since the column sums of this matrix are $0$, a solution $\{\chi^{(k)}_t\}_{k=0}^{n-1}$ exists because the sum of the right hand side of \eqref{eqn_chi} over $k\in\{0,1,\ldots, n-1\}$ is zero. 

The idea of an approximate ansatz is that it solves the equation for $\bar{v}$ but with a residual that is small in $h$. This allows us to use Gronwall's inequality to obtain the approximation error between $\overline{V}$ and $V$. If we can prove a generous enough bound for $\chi$, then we will have that $\overline{V}^{(k)}$ can be approximated by $\bar{v}(hk)$. Before delving into the details of the approximation error, we need to understand $\chi$ better.

Note that when averaged over all $k$, the right hand side of \eqref{eqn_chi} is $0$. Therefore, we can indeed solve for $\chi$. Furthermore, the solution $\chi$ depends on $p$, although not explicitly written. When $p=1$, $\chi\equiv0$ is a possible solution as the right hand side vanishes. For the other extreme, $p=0$, the right hand side of \eqref{eqn_chi} is a generic vector in the image of the discrete Laplacian, i.e. a vector with no constant component. Standard matrix inequalities then give that there exists a constant $C$ such that for all $n\in \mathbb{N}$, 
$$ \sup_{t \in \R_+} \max_{k\in\{0,1,\cdots,n-1\}}|\chi^{(k)}| \leq Ch^{-2}.$$ 

This estimate is too coarse for our purposes but can be improved.
In fact, there exists a solution $\chi$ to equation \eqref{eqn_chi} and a constant $C$ (when $D=1$, $C=1/4$ works, see \eqref{Bound_l1_nu} and \eqref{bound_l1_difnu}) such that for all $p \in [0,1]$ and $n\in\mathbb{N}$,
\begin{align}
    \sup_{t \in \R_+} \max_{k \in \{0, \ldots, n-1\}}|\chi^{(k)}| \leq Ch^{-2+2p} \quad \text{and} \quad \sup_{t \in \R_+}\max_{k \in \{0, \ldots, n-1\}}|\chi^{(k+1)}-\chi^{(k)}| \leq Ch^{-1+p}.
    \label{inq_bound_chi}
\end{align}
One may check this by directly computing a solution $\chi$. See Section \ref{Ap:chi_bounds} in the appendix. Note that the constant $C$ can be chosen uniformly for all time $t\in\R_+$ because the $Z^{(k)}$ are bounded uniformly for all time.

We did not need to use anything about the stochasticity of $\chi$ in the preceding discussion. However, since $\chi$ involves sums of Poisson processes, and therefore is a multivariate jump process, we need some preliminary bounds on how often it jumps.

Recall the solution $\chi_t$ to equation \eqref{eqn_chi} for $t\in \R_+$ is a  jump process in $\R^n$, which jumps whenever $Z^{(k)}$ jumps for any $k$.  Suppose that there are $J=J_n$ many jumps 
for the process  $\{Z_t^{(k)}\}_{k=0}^{n-1}$ in the interval $[0,T]$, and that the jump times are $t_1,t_2,...,t_{J_n}$. Let $t_0=0$ and $t_{J_n+1}=T$, so that $0<t_1<t_2<...<t_{J_n}<t_{J_n+1}=T$  almost surely.

Recall that the apriori bound \eqref{E:aprioriBoundV} for $V$ is uniform in $n\in\mathbb{N}$. This, together with our assumption that $\alpha$ and $\beta$ are bounded on compact intervals (Assumption \ref{A:functions g and A}), implies that $\alpha(V)$ and $ \beta(V)$ are uniformly bounded for $h\in(0,1)$.

Hence $J_n$ is bounded above by the total number of jumps of $2n$ independent unit-rate Poisson processes before time $\widetilde{T}$, where
$$\widetilde{T}:=T\,( \|\alpha(V)\|_{L^{\infty}([0,T]\times \mathbb{S})} \vee \|\beta(V)\|_{L^{\infty}([0,T]\times \mathbb{S})}).$$ 
Therefore, by the functional LLN for Poisson processes (c.f. \citep[Theorem 1.2 and Lemma 1.3]{anderson2015stochastic}),
with probability one there exists an $\R$-valued random variable $C_{\omega,T}$ such that 
\begin{align}\label{inq_nu}
|J_n| \leq C_{\omega,T} n=C_{\omega,T} {h}^{-1} \quad \text{ for all }n\in\mathbb{N}.
\end{align}
Note that the boundedness assumption of $\alpha$ and $\beta$ is used in \eqref{inq_nu}.

Furthermore, for all $q\in \mathbb{N}$, there exists a constant $C_{q}\in(0,\infty)$ such that
\begin{align}\label{inq_nu_E}
\E[J_n^q]\leq C_q\,\left[1+ (2n \,\widetilde{T})^q\right] \quad \text{ for all }n\in\mathbb{N} \text{ and }T\in(0,\infty). 
\end{align}
This inequality could be useful to obtain moment bounds for the error on the left hand side of \eqref{E:Main}, which would not be pursued in this paper.

Further note that for each $i \in\{1,2,\ldots,J_n\}$, we have almost surely that there is exactly one $k_i$ for $k_i \in\{0,1, \ldots,n-1\}$ such that $Z^{(k_i)}$ jumps at $t_i$. For all other $k \in \{0,\ldots,i-1,i+1, \ldots n-1\}$, $Z_t^{(k)}$ does not jump at $t_i$. The amount that $\chi_t^{(k_i)}$ jumps is given by $\lim_{t\to  t_i^-}|\chi_t^{(k_i)}-\chi_{t_i}^{(k_i)}| $. One can show by directly computing $\chi_t$ (see Section \ref{Ap:chi_bounds} in the appendix) that for any $i \in \{1,2,\ldots,J_n\}$, we have 
\begin{align}
\max_{k \in \{0,\ldots, n-1\}}\lim_{t\to t_i^-}|\chi_t^{(k)}-\chi_{t_i}^{(k)}| \leq Ch^{-1+p}.
\label{eqn_jump_amt}
\end{align}
The constant again does not depend on $p$ (when $D=1$,  $C=1/8$ works by \eqref{bound_max_nu}).

Now recall the approximate ansatz 
\begin{align}
\overline{V}^{(k)}:=\bar{v}(hk)+h^2\chi_t^{(k)}f(\bar{v}(hk)).
\label{def_of_ansatz}
\end{align}
It is approximate in the following sense. For each interval in $\{(t_j,t_{j+1})\}_{j=0}^{J_n}$, over which $\chi$ is constant, let
\begin{align}
\text{res}_{t}^{(k)}:=\,&\dfrac{d\overline{V}^{(k)}}{dt}-\left( \frac{D\,(\overline{V}^{(k-1)}-2\overline{V}^{(k)}+\overline{V}^{(k+1)}) }{h^2} \,+\,Z^{(k)}_s\,f(\overline{V}_s^{(k)}) -g(\overline{V}_s^{(k)})\right).
\label{eqn_res_def}
\end{align}
Although this is a definition, we can view it as a system which $\overline{V}$ solves over each of the time intervals $(t_j,t_{j+1})$. In the next lemma, we prove an estimate on the residual. 

\begin{lemma}{(Consistency)}\label{L:consistency} Let $\tau_T:=[0,T]\setminus \{t_j\}_{j=0}^{J_n+1} =\bigcup_{j=1}^{J_n+1}(t_{j-1},t_j)$ be the complement of the jump times of $Z$. Suppose $f,g\in \mathcal{C}^{1,1}(\mathbb{R})$ and are globally Lipschitz. Then for all 
$p \in[0,1)$, $\rho \in (p,1)$, and $T \in (0,\infty)$, there exists a constant  $C_{T,\rho}$ such that the inequality
\begin{equation}
\sup_{t \in \tau_T}\max_{k \in \{0,\ldots,n-1\}}|\text{\emph{res}}_{t}^{(k)}| \leq C_{T,\rho}(h^{p }\vee h^{\rho-p}).
\label{ode_for_res}
\end{equation}
holds for all  $n \in \mathbb{N}$ almost surely, where $h=1/n$.
\label{lemma_bound_res}
\end{lemma}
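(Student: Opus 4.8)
The plan is to expand $\text{res}_t^{(k)}$ using the definition of the ansatz $\overline{V}^{(k)} = \bar v(hk) + h^2\chi_t^{(k)} f(\bar v(hk))$ and the intermediate PDE \eqref{eqn_intermediate_avg} that $\bar v$ satisfies, and then to show that every resulting term is $O(h^p \vee h^{\rho-p})$ uniformly in $k$ and in $t \in \tau_T$. Since $\chi_t$ is constant on each interval $(t_{j-1},t_j)$, on such an interval $\tfrac{d}{dt}\overline{V}^{(k)} = \partial_t \bar v(hk) + h^2\chi_t^{(k)} f'(\bar v(hk))\partial_t\bar v(hk)$. I would substitute this and the definition of $\overline{V}$ into \eqref{eqn_res_def}, then add and subtract the exact PDE right-hand side $D\partial_x^2\bar v(hk) + \bar z_t(hk) f(\bar v(hk)) - g(\bar v(hk))$ evaluated at $x=hk$, which equals $\partial_t\bar v(hk)$ by \eqref{eqn_intermediate_avg}. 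This cancels the leading $\partial_t\bar v(hk)$ term and leaves a sum of discrepancy terms to estimate.

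The discrepancy terms group as follows. \emph{(i)} The difference between the discrete Laplacian of $\bar v(hk)$ and $D\partial_x^2\bar v(hk)$, which by \eqref{eq 4 difference} is $O(h^{\rho-p})$. \emph{(ii)} The term coming from applying the discrete Laplacian to $h^2\chi_t^{(k)}f(\bar v(hk))$: expanding $D h^2 \cdot \tfrac{1}{h^2}\Delta_{\text{disc}}(\chi^{(k)}f(\bar v(hk)))$ and using $D\Delta_{\text{disc}}\chi^{(k)} = \overline Z_t^{(k)} - Z_t^{(k)}$ from \eqref{eqn_chi}, the principal contribution is exactly $(\overline Z_t^{(k)} - Z_t^{(k)})f(\bar v(hk))$; combined with the reaction terms this is designed to cancel against $Z_t^{(k)}f(\overline V^{(k)}) - \bar z_t(hk)f(\bar v(hk))$ up to higher-order corrections. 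The leftover pieces involve products of $\chi^{(k+1)}-\chi^{(k)}$ with first differences of $\bar v$ and of $f(\bar v)$, which by \eqref{inq_bound_chi} ($|\chi^{(k+1)}-\chi^{(k)}|\le Ch^{-1+p}$) and \eqref{eq one difference} ($|\bar v(hk)-\bar v(hk-h)|\le Ch$) and the Lipschitz bound on $f$ are $O(h^{-1+p})\cdot O(h) = O(h^p)$, plus a term $h^2\chi^{(k)}$ times a second difference of $f(\bar v)$ which is $O(h^{-2+2p})\cdot O(h^{2-p}) = O(h^p)$ using \eqref{inq_bound_chi} and \eqref{eq two difference}. \emph{(iii)} The term $h^2\chi_t^{(k)}f'(\bar v(hk))\partial_t\bar v(hk)$ from differentiating the ansatz: by \eqref{inq_bound_chi}, \eqref{inq_time_derv}, and boundedness of $f'$ on the relevant compact interval (via \eqref{A:regularity_toy}), this is $O(h^2)\cdot O(h^{-2+2p})\cdot O(h^{-p}) = O(h^{p})$. \emph{(iv)} The difference $Z_t^{(k)}\big(f(\overline V^{(k)}) - f(\bar v(hk))\big) - g(\overline V^{(k)}) + g(\bar v(hk))$, which by the global Lipschitz property of $f,g$ is $O(|\overline V^{(k)} - \bar v(hk)|) = O(h^2\chi_t^{(k)}) = O(h^{2p})$ using \eqref{inq_bound_chi} and boundedness of $f(\bar v)$.

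Collecting these, every term is bounded by a constant $C_{T,\rho}$ times $h^p \vee h^{\rho-p}$, which gives \eqref{ode_for_res}. The main obstacle is the bookkeeping in step (ii): one must carefully perform a discrete product-rule (summation-by-parts style) expansion of $\Delta_{\text{disc}}(\chi^{(k)} f(\bar v(hk)))$ so that the cross terms are correctly paired with first differences of $\chi$ and of $f(\bar v)$, and then verify that after invoking \eqref{eqn_chi} the would-be leading term $(\overline Z_t^{(k)} - Z_t^{(k)})f(\bar v(hk))$ combines with $Z_t^{(k)}f(\bar v(hk)) - \bar z_t(hk)f(\bar v(hk))$ to give exactly zero (recall $\overline Z_t^{(k)} = \bar z_t(hk)$ by \eqref{Def:discretizedLA}), leaving only the genuinely higher-order remainders. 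Keeping track of which compact interval the arguments of $f, f', g$ lie in — so that the $\mathcal C^{1,1}$ bounds apply with constants depending only on $T$ via the a priori bounds \eqref{inq_v_bar} and \eqref{inq_bound_chi} — is the other point requiring care, but it is routine given the regularity assumption \eqref{A:regularity_toy}.
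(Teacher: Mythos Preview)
Your proposal is correct and follows essentially the same route as the paper: compute $\text{res}^{(k)}$ by inserting the ansatz, use the discrete product rule on $\Delta_{\text{disc}}(\chi^{(k)}f(\bar v(hk)))$ so that \eqref{eqn_chi} produces exactly $(\overline Z^{(k)}-Z^{(k)})f(\bar v(hk))$ to cancel against the reaction discrepancy, and then bound the six leftover terms via \eqref{eq one difference}--\eqref{eq 4 difference}, \eqref{inq_bound_chi}, \eqref{inq_time_derv}, and the $\mathcal C^{1,1}$/Lipschitz hypotheses on $f,g$. The only cosmetic slip is in your item (ii), where you write ``$h^2\chi^{(k)}$ times a second difference'' --- the $h^2$ from the ansatz has already been absorbed by the $1/h^2$ of the discrete Laplacian, so the term is simply $\chi^{(k+1)}$ times the second difference of $f(\bar v)$; your arithmetic $O(h^{-2+2p})\cdot O(h^{2-p})=O(h^p)$ is nonetheless the correct one.
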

\begin{remark}
In certain applications where there is some constant $B$ such that $\sup_{t \in [0,T]}\max_{k \in {0, \ldots, n-1}}\left|V^{(k)}\right| \leq B, $ it will be the case that $\bar{v}$ inherits this bound, and so the $T$ dependency in \eqref{ode_for_res} will be algebraic.
\end{remark}
\begin{proof}
Write $\tau=\tau_T$ for simplicity.
We calculate the residual using the definition of the ansatz in \eqref{def_of_ansatz}. Using a discrete version of the product rule:
    \begin{equation}\begin{aligned} \chi^{(k+1)}f(\bar{v}(hk+h))-2\chi^{(k)}f(\bar{v}(hk))+\chi^{(k-1)}f(\bar{v}(hk-h)))= 
    \\\chi^{(k+1)}(f(\bar{v}(hk+h))-2f(\bar{v}(hk))+f(\bar{v}(hk-h)))\\
    +(\chi^{(k+1)}-\chi^{(k-1)})(f(\bar{v}(hk))-f(\bar{v}(hk-h)))
     \\+(\chi^{(k+1)}-2\chi^{(k)}+\chi^{(k-1)})f(\bar{v}(hk)).
    \end{aligned}
    \end{equation}
    Since $\bar{v}$ solves \eqref{eqn_intermediate_avg} and $\chi$ solves \eqref{eqn_chi}, we find that 
    \begin{equation}
    \begin{aligned}
    \text{res}^{(k)}=-\left(\frac{D\,(\bar{v}_t(hk+h)-2\bar{v}_t(hk)+\bar{v}_t(hk-h)}{h^2}-D\Delta \bar{v}_t(hk)\right)
    \\-\chi^{(k+1)}(f(\bar{v}(hk+h))-2f(\bar{v}(hk))+f(\bar{v}(hk-h)))\\
    -(\chi^{(k+1)}-\chi^{(k-1)})(f(\bar{v}(hk))-f(\bar{v}(hk-h)))
    \\-Z^{(k)}\left[f\left(\bar{v}(hk)+h^2\chi^{(k)}f(\bar{v}(hk))\right)-f(\bar{v}(hk))\right]\\
    +\left[g\left(\bar{v}(hk)+h^2\chi^{(k)}f(\bar{v}(hk))\right)-g(\bar{v}(hk))\right] \\
    + h^2 \chi^{(k)}f'(\bar{v}(hk))\partial_{t}\bar{v}(hk).
    \end{aligned}
    \end{equation}
    Using the triangle inequality, we have 
     \begin{equation}
    \begin{aligned}
    \sup_{t \in \tau}|\text{res}^{(k)}|\leq \sup_{t \in \tau}\left|\frac{D\,(\bar{v}_t(hk+h)-2\bar{v}_t(hk)+\bar{v}_t(hk-h)}{h^2}-D\Delta \bar{v}_t(hk)\right|
    \\+\sup_{t \in \tau}\left|\chi^{(k+1)}(f(\bar{v}(hk+h))-2f(\bar{v}(hk))+f(\bar{v}(hk-h)))\right|\\
    +\sup_{t \in \tau}\left|(\chi^{(k+1)}-\chi^{(k-1)})(f(\bar{v}(hk))-f(\bar{v}(hk-h)))\right|
    \\+\sup_{t \in \tau}\left|Z^{(k)}\left[f\left(\bar{v}(hk)+h^2\chi^{(k)}f(\bar{v}(hk))\right)-f(\bar{v}(hk))\right]\right|\\+\sup_{t \in \tau}\left|\left[g\left(\bar{v}(hk)+h^2\chi^{(k)}f(\bar{v}(hk))\right)-g(\bar{v}(hk))\right]\right|
    \\
    +\sup_{t \in \tau}\left| h^2 \chi^{(k)}f'(\bar{v}(hk))\partial_{t}\bar{v}(hk) \right|
    .
\end{aligned}
    \end{equation}
    The first term is bounded by \eqref{eq 4 difference}. The second term is bounded by \eqref{inq_bound_chi} and, using that $f'$ is locally Lipschitz continuous, \eqref{eq one difference}  and \eqref{eq two difference}. The third term is bounded using the second inequality in \eqref{inq_bound_chi}, and  using that $f'$ is continuous, \eqref{eq one difference}. For the fourth and fifth terms, we use the Lipschitz continuity of $f$ and $g$ as well as  the first inequality in \eqref{inq_bound_chi}. The sixth term is bounded using the first inequality in \eqref{inq_bound_chi} and \eqref{inq_time_derv}. 
\end{proof}
With the preceding estimate on the residual, we can now carry out the  argument based on Grownall's inequality. Recall that $h=h_n=1/n$.

\begin{lemma}{(Stability and Spatial Homogenization)}\label{lemma_Homogenization}
Suppose  $f,g\in \mathcal{C}^{1,1}(\mathbb{R})$ and are globally Lipschitz. Suppose that  $\bar{v}_0 \in \mathcal{C}^2(\mathbb{S})$ and $V_0^{(k)}=\bar{v}_0(hk)$ for all $k\in\{0,1,\cdots, n-1\}$. Then for each $p \in [0,1)$, $\rho \in (p,1)$ and each $T \in (0,\infty)$, there exists a constant $C_{T,\rho}$ and random variable $C_{T,\omega}$ that is finite almost surely such that for all $n \in \mathbb{N}$ we have almost surely
\begin{equation}\label{E:Homogenization}
    \sup_{t \in [0,T]}\max_{k \in \{0,\ldots,n-1\}}|\bar{v}_t(hk)-V^{(k)}_t| \leq C_{T,\omega} C_{T,\rho}(h^{p}\vee h^ {\rho -p}).
\end{equation}
\end{lemma}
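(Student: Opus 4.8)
The plan is to estimate the error $e^{(k)}_t := \overline V^{(k)}_t - V^{(k)}_t$ between the approximate ansatz \eqref{def_of_ansatz} and the true stochastic voltage, and then to conclude via the triangle inequality
\[
|\bar v_t(hk)-V^{(k)}_t| \;\le\; h^2\,|\chi^{(k)}_t|\,|f(\bar v_t(hk))| \;+\; |e^{(k)}_t| ,
\]
where the first term is already $O(h^{2p})\le O(h^p)$ by the first bound in \eqref{inq_bound_chi} together with the fact that $\|f(\bar v)\|_{L^\infty([0,T]\times\mathbb S)}\le C_T$ (since $f$ is globally Lipschitz and $\bar v$ is bounded uniformly in $h,p$ by \eqref{inq_v_bar}). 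So the crux is to show $\sup_{t\in[0,T]}\max_k|e^{(k)}_t|\le C_{T,\omega}\,C_{T,\rho}\,(h^p\vee h^{\rho-p})$.

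First I would record the evolution of $e$. On each open interval $(t_j,t_{j+1})$ between successive jump times of $Z$ the process $\chi$ is constant, so $\overline V$ is $\mathcal C^1$ there, and subtracting \eqref{E:V} from the definition \eqref{eqn_res_def} gives, using that the discrete Laplacian is linear,
\begin{equation*}
\frac{de^{(k)}}{dt} = \frac{D(e^{(k-1)}-2e^{(k)}+e^{(k+1)})}{h^2} + Z^{(k)}_t\bigl(f(\overline V^{(k)}_t)-f(V^{(k)}_t)\bigr) - \bigl(g(\overline V^{(k)}_t)-g(V^{(k)}_t)\bigr) + \mathrm{res}^{(k)}_t .
\end{equation*}
At each jump time $t_j$ of $Z$ the voltage $V$ stays continuous, while $\overline V$ jumps by $|\overline V^{(k)}_{t_j^+}-\overline V^{(k)}_{t_j^-}| = h^2\,|\chi^{(k)}_{t_j^+}-\chi^{(k)}_{t_j^-}|\,|f(\bar v_{t_j}(hk))| \le C_T h^{1+p}$ by \eqref{eqn_jump_amt} and the bound on $\|f(\bar v)\|_\infty$.

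Next I would run a discrete maximum-principle plus Gronwall argument on $\phi(t):=\max_{0\le k\le n-1}|e^{(k)}_t|$. Between jumps $\phi$ is locally Lipschitz, hence differentiable a.e.; at a point of differentiability pick an index $k^\ast$ and a sign realizing the maximum, so that $\pm(e^{(k^\ast+1)}_t-2e^{(k^\ast)}_t+e^{(k^\ast-1)}_t)\le 0$, and using $|Z^{(k)}_t|\le1$ together with the global Lipschitz bounds on $f,g$ one gets $\tfrac{d}{dt}\phi(t)\le C\,\phi(t)+\max_k|\mathrm{res}^{(k)}_t|$. (Equivalently one may represent $e$ by Duhamel's formula with the discrete heat semigroup $e^{tD\Delta_h}$, which is an $\ell^\infty$-contraction because it is a Markov transition matrix on $\mathbb Z/n\mathbb Z$.) Integrating over the inter-jump intervals and adding the jump increments yields, for all $t\in[0,T]$,
\begin{equation*}
\phi(t) \le \phi(0) + \sum_{j:\,t_j\le t}\max_k\bigl|\overline V^{(k)}_{t_j^+}-\overline V^{(k)}_{t_j^-}\bigr| + \int_0^t\Bigl(C\,\phi(s)+\max_k|\mathrm{res}^{(k)}_s|\Bigr)\,ds .
\end{equation*}
Here $\phi(0)=\max_k h^2|\chi^{(k)}_0|\,|f(\bar v_0(hk))|\le C_T h^{2p}$ by the hypothesis $V^{(k)}_0=\bar v_0(hk)$ and \eqref{inq_bound_chi}; the jump sum is at most $J_n\cdot C_T h^{1+p}\le (C_{\omega,T}h^{-1})(C_T h^{1+p})=C_{\omega,T}C_T h^p$ by the a.s. linear bound \eqref{inq_nu} on the number of jumps; and $\int_0^t\max_k|\mathrm{res}^{(k)}_s|\,ds\le T\,C_{T,\rho}(h^p\vee h^{\rho-p})$ by Lemma \ref{L:consistency} (the integrand being defined off the null set of jump times). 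Absorbing the $h^{2p}$ term, which is dominated since $p\ge0$ and $h<1$, this reads $\phi(t)\le A+C\int_0^t\phi(s)\,ds$ with $A:=C_{\omega,T}C_{T,\rho}(h^p\vee h^{\rho-p})$, and Gronwall's inequality gives $\sup_{t\in[0,T]}\phi(t)\le Ae^{CT}$. Combined with the triangle inequality above this is exactly \eqref{E:Homogenization}.

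The main obstacle is controlling the cumulative contribution of the jumps of $\overline V$: a single jump of $\chi$ has size only $O(h^{-1+p})$, so $h^2$ times it is $O(h^{1+p})$, but there are $O(n)=O(h^{-1})$ jumps on $[0,T]$, and these two estimates are precisely balanced to produce an $O(h^p)$ total — this is where the random constant, and hence $C_{T,\omega}$ in \eqref{E:Homogenization}, enters, through the almost-sure bound \eqref{inq_nu} coming from the functional law of large numbers for Poisson processes. A secondary point requiring care is the envelope-type differentiation of $\phi(t)=\max_k|e^{(k)}_t|$ (or, in the Duhamel formulation, verifying the $\ell^\infty$-contraction of $e^{tD\Delta_h}$), but this is standard.
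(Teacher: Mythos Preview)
Your proposal is correct and follows essentially the same approach as the paper: both argue via the ansatz $\overline V$, control the residual by Lemma~\ref{L:consistency}, handle the cumulative jump contribution $J_n\cdot O(h^{1+p})=O(h^p)$ using \eqref{inq_nu} and \eqref{eqn_jump_amt}, and close with Gronwall. The only cosmetic difference is that the paper works directly with the Duhamel representation via the discrete heat semigroup $e^{D\Delta_h t}$ (whose $\ell^\infty$-contractivity you mention as an equivalent alternative), whereas you lead with the maximum-principle differentiation of $\phi(t)=\max_k|e^{(k)}_t|$; these are two phrasings of the same estimate.
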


\begin{remark} The lemma says that if we take local averages of $Z$, and supply this to the intermediate PDE for $\bar{v}$, we can get an approximations for the behavior of $V$ by that of $\bar{v}$, with the best error bound on the right of \eqref{E:Homogenization} when $p=1/2$. This approximation may alternatively be established by using a local limit theorem of the simple random walks, as in \citep[Appendix]{cox2002stepping} or \citep[Section 5]{chen2017hydrodynamic}. However,
here we demonstrate a method that requires only a coarser estimate on the heat kernel.
\end{remark}

\begin{proof}[Proof of Lemma \ref{lemma_Homogenization}]
Throughout the proof we use $\|\cdot\|$ to denote $\max_{k \in \{0, \ldots, n-1\}}|\cdot|$.
Note that the system of ODEs defined by
\begin{equation} 
\dfrac{dU_t^{(k)}}{dt}=\dfrac{D(U_t^{(k-
1)}-2U_t^{(k)}+U_t^{(k+
1)})}{h^2}
\end{equation}
has fundamental solution, denoted as $e^{D\Delta_h t}$, where $(D\Delta_hU)^{(k)}:=\frac{D(U^{(k-
1)}-2U^{(k)}+U^{(k+
1)})}{h^2}$. Let $\mathscr{V}^{(k)}_t:=V^{(k)}_t-\overline{V}_t^{(k)}$, $\mathscr{F}^{(k)}_t:=f(V^{(k)}_t)-f(\overline{V}^{(k)}_t)$ and $\mathscr{G}^{(k)}_t:=g(V^{(k)}_t)-g(\overline{V}^{(k)}_t)$.
Now we view \eqref{eqn_res_def} as an equation $\overline{V}$ solves for each time interval $(t_{j},t_{j+1})$ with the initial condition at $t_j$ given by $\overline{V}(t_{j})$ as defined in the ansatz \eqref{def_of_ansatz}. Thus, we rewrite \eqref{eqn_res_def} and \eqref{E:V} using the variation of constants formula for $t \in (t_j,t_{j+1})$ to obtain
\begin{equation}
\|\mathscr{V}_t\|\leq 
\|e^{D \Delta_h t}\mathscr{V}_{t-t_j}\|+
\int_{t_j}^{t}\|e^{D \Delta_h (t-t_j-s)}\left(Z_s
\mathscr{F}_s
-\mathscr{G}_s
+\text{res}_s\right)\|ds.
\end{equation}
Since $f$ and $g$ are Lipschitz we find that
\begin{equation}
\begin{aligned}
\|\mathscr{V}_t\| \leq &\,
\|\mathscr{V}_{t_j}\|+
\int_{t_j}^{t}\left(C\|\mathscr{V}_s\|+\|
\text{res}_s\|\right)ds
\\ 
\leq &\, \|\mathscr{V}_{t_j}\|+(t_{j+1}-t_j)\sup_{t \in (t_j,t_{j+1})}\|
\text{res}_t\|+\int_{t_j}^{t}C\|\mathscr{V}_s\|ds.
\end{aligned}
\end{equation}
Let $\mathscr{V}_{{t_j}^-}=\lim_{t \to t_j^-}\mathscr{V}_t.$ Since $\bar{v}$ and $V$ are continuous in time, 
$\mathscr{V}_{t_j}-\mathscr{V}_{{t_j}^-}= h^2(\chi_{{t_j}^{-}}-\chi_{t_j})f(\bar{v}_{t_j}).$  Thus we obtain
\begin{equation}
\|\mathscr{V}_t\| \leq \, h^2\|\chi_{{t_j}^-} -\chi_{t_j}\|\|f(\bar{v}_{t_j})\| +\|\mathscr{V}_{t_{j-1}}\|+(t_{j+1}-t_{j-1})\sup_{t \in (t_{j-1},t_{j+1})}\|
\text{res}_t\|+\int_{t_{j-1}}^{t}C\|\mathscr{V}_s\|ds.
\end{equation}
Proceeding this way over each interval gives
\begin{equation}
\|\mathscr{V}_t\| \leq \, h^2\sum_{j=1}^{J}\|\chi_{{t_j}^-} -\chi_{t_j}\|\|f(\bar{v}_{t_j})\| +\|\mathscr{V}_{0}\|+T\esssup_{t \in (0,T)}\|
\text{res}_t\|+\int_{0}^{t}C\|\mathscr{V}_s\|ds.
\end{equation}
According to \eqref{eqn_jump_amt} and the discussion preceding \eqref{eqn_jump_amt}, there exists a constant $C$ such that for all jump times $t_j \in \{t_1,t_2,\ldots,t_J\}$ we have that $\|\chi_{t_j}-\chi_{{t_j}^{-}}\|\|f(\bar{v}_{t_j})\|\leq Ch^{-1+p}\max_{j \in \{1,\ldots, J\}}\|f(\bar{v}_{t_j})\|$. From \eqref{inq_bound_chi} we have that $|\mathscr{V}_{0}|\leq Ch^{2p}$. Combing these observations with \eqref{inq_nu}, we find that
\begin{align}
    \|\mathscr{V}_t\| \leq C_\omega(1+T) \max_{j \in \{1,\ldots, J\}}\|f(\bar{v}_{t_j})\| h^{p} +T\esssup_{t\in [0,T]}\| \text{res}_t\| + \int_{0}^{t}C\|\mathscr{V}_s\|ds.
\end{align}

By Gronwall's inequality 
\begin{equation}
\begin{aligned}
&\sup_{t \in [0,T]}\max_{k}\left|V^{(k)}_t-\overline{V}^{(k)}_t\right| 
  \\ &\leq  (1+T)\left(C_\omega \max_{j \in \{1,\ldots, J\}}\|f(\bar{v}_{t_j})\|h^p+\esssup_{t \in [0,T]}\|\text{res}_t^{(k)}\|\right)e^{CT}.
\label{eqn_with_j}
\end{aligned}
\end{equation}
Now we use Lemma \ref{lemma_bound_res} and \eqref{inq_bound_chi} to complete the proof. 
\end{proof}

So far, we have that $V^{(k)}$ can be approximated by $\bar{v}(hk)$ from Lemma \ref{lemma_Homogenization} above. Next we argue that the pair $(\bar{v},\bar{z})$ is approximated by $(v,z)$ in $L^\infty([0,T] \times \mathbb{S})$, using another Gronwall style argument.
Lemma \ref{L:2} is the culmination of this comparison. Note that $v$ has equal or better regularity than that of $\bar{v}$. 
\begin{lemma}\label{L:2}
    Suppose  $v_0 \in \mathcal{C}^{2}(\mathbb{S})$ and $z_0\in \mathcal{C}^{1}(\mathbb{S})$. Suppose $f,g \in  \mathcal{C}^{1,1}(\mathbb{R})$ and are globally Lipschitz. Finally suppose $\alpha,\beta \in  \mathcal{C}^1(\mathbb{R})$. Then for each $\rho \in (p,1)$ and $T \in (0,\infty)$, there exists finite random constant $C_{\omega,T,\rho}$ such that for all $p \in[0,1)$ we have almost surely that
    \begin{equation}
        \sup_{t \in [0,T]}\left(\|v -\bar{v}\|+\|z-\bar{z}\|\right) \leq  C_{\omega,T,\rho} (h^p \vee h^{1/2-p/2} \vee h^{\rho-p})\left|\log(h^{-1})\right|
        \label{Inq:Intermediate_2}
    \end{equation}
    for all $n\in\mathbb{N}$, where $h=h_n=1/n$.
    \end{lemma}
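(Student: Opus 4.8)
The plan is to run a Gronwall estimate simultaneously on $e_v:=v-\bar v$ and $e_z:=z-\bar z$ in the $L^\infty(\mathbb{S})$ norm, with source terms that reproduce exactly the three scales $h^p$, $h^{1/2-p/2}$ and $h^{\rho-p}$ appearing in \eqref{Inq:Intermediate_2}. The first step is to put $\bar z$ into a usable form: compensating the Poisson processes in \eqref{E:Zkl} and averaging against $\Phi_{h,p}$ gives, for every $x\in\mathbb{S}$ and $t\in[0,T]$,
\[
\bar z_t(x) = \bar z_0(x) + \int_0^t N_{h,p}^{-1}\sum_k \Phi_{h,p}(x-hk)\Big(\alpha(V^{(k)}_s)(1-Z^{(k)}_s)-\beta(V^{(k)}_s)Z^{(k)}_s\Big)\,ds + R_t(x),
\]
where $R_t(x)$ is the $\Phi_{h,p}$-average of the centered Poisson terms $\pm\big(\mathcal{N}^{(k)}_\pm(\tau^\pm_{k,n}(t))-\tau^\pm_{k,n}(t)\big)$. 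Since $\sup_{s\in[0,T]}(\tau^+_{k,n}(s)\vee\tau^-_{k,n}(s))\le\tau_T$ a.s.\ uniformly in $n,k$, Lemma \ref{Lemma_spatial_law}, applied along the sequence $N_{1,p},N_{1/2,p},N_{1/2,p},N_{1/3,p},\dots$ with any $\gamma>2/(1-p)$ as arranged in the discussion following that lemma, yields an a.s.\ finite $C_{\omega,T}$ with $\sup_{t\in[0,T]}\max_k|R_t(hk)| \le C_{\omega,T}\,h^{1/2-p/2}\log(h^{-1})$ for all $n$.

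The heart of the argument is identifying the bounded-variation part of $\bar z$ with the PDE drift. Fixing $x$ and writing $\alpha(V^{(k)}_s)(1-Z^{(k)}_s)=\alpha(\bar v_s(x))(1-Z^{(k)}_s)+\big(\alpha(V^{(k)}_s)-\alpha(\bar v_s(x))\big)(1-Z^{(k)}_s)$ (and likewise for $\beta$), one uses the homogenization bound $|V^{(k)}_s-\bar v_s(hk)|\le C_{\omega,T}C_{T,\rho}(h^p\vee h^{\rho-p})$ from Lemma \ref{lemma_Homogenization}, the $p$-independent gradient control $|\bar v_s(hk)-\bar v_s(x)|\le C_1e^{C_2T}\,O(h^p)$ for $hk$ in the support of $\Phi_{h,p}(x-\cdot)$ from Lemma \ref{L:partialx_barv}, Lipschitz continuity of $\alpha,\beta$ on the fixed compact interval containing the ranges of $V$ and $\bar v$ (by \eqref{E:aprioriBoundV} and \eqref{inq_v_bar}), and the identity $N_{h,p}^{-1}\sum_k\Phi_{h,p}(x-hk)(1-Z^{(k)}_s)=1-\bar z_s(x)+O(h^{1-p})$, to obtain
\[
N_{h,p}^{-1}\sum_k \Phi_{h,p}(x-hk)\Big(\alpha(V^{(k)}_s)(1-Z^{(k)}_s)-\beta(V^{(k)}_s)Z^{(k)}_s\Big) = \alpha(\bar v_s)(1-\bar z_s)(x)-\beta(\bar v_s)\bar z_s(x)+E_s(x),
\]
with $\sup_{s\in[0,T]}\|E_s\|_{L^\infty(\mathbb{S})}\le C_{\omega,T,\rho}(h^p\vee h^{\rho-p})$ a.s.\ and uniformly in $n$. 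In parallel, the initial mismatch is controlled as in the $t=0$ heuristic of Section \ref{S:Result}: Lipschitz smoothing of $z_0$ over a ball of radius $O(h^p)$ bounds the bias, and Hoeffding's inequality with Borel--Cantelli over the $n$ i.i.d.\ averages of $N_{h,p}\sim h^{p-1}$ Bernoullis bounds the fluctuation, giving $\|z_0-\bar z_0\|_{L^\infty(\mathbb{S})}\le C_{\omega,T}(h^p\vee h^{1/2-p/2})\log(h^{-1})$.

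With these ingredients the Gronwall loop closes. Subtracting the integral equation \eqref{eq:general_PDE2z} for $z$ from the displayed equation for $\bar z$ (valid pointwise in $x$, since neither carries spatial coupling once $v,\bar v$ are given) and using that $(v,z)\mapsto\alpha(v)(1-z)-\beta(v)z$ is Lipschitz on the relevant compact set (all of $v,\bar v$ bounded, $z,\bar z\in[0,1]$), one gets, after passing from all of $\mathbb{S}$ to the lattice at the cost $O(h^{1-p})$ via $\|\partial_x z\|_\infty\le C$ and $\|\partial_x\bar z\|_\infty\le Ch^{-p}$ from \eqref{Inq:Regularit z},
\[
\|e_z(t)\|_{L^\infty} \le \|z_0-\bar z_0\|_{L^\infty}+T\sup_{s\le T}\|E_s\|_{L^\infty}+\sup_{s\le T}\max_k|R_s(hk)|+C\!\int_0^t\!\big(\|e_v(s)\|_{L^\infty}+\|e_z(s)\|_{L^\infty}\big)\,ds.
\]
For $e_v$, subtracting \eqref{eq:Duhml} from \eqref{eq:Duhmlv}, using $v(0,\cdot)=\bar v(0,\cdot)=v_0$, the $L^\infty$-contractivity $\|e^{D\Delta t}\phi\|_{L^\infty}\le\|\phi\|_{L^\infty}$, and Lipschitz continuity of $(v,z)\mapsto zf(v)-g(v)$, gives $\|e_v(t)\|_{L^\infty}\le C\int_0^t(\|e_v(s)\|_{L^\infty}+\|e_z(s)\|_{L^\infty})\,ds$; adding the two and applying Gronwall to $\|e_v(t)\|_{L^\infty}+\|e_z(t)\|_{L^\infty}$ yields the claimed bound $C_{\omega,T,\rho}(h^p\vee h^{1/2-p/2}\vee h^{\rho-p})\log(h^{-1})$, with $C_{\omega,T,\rho}$ absorbing $e^{CT}$ and the random constants from Lemmas \ref{Lemma_spatial_law} and \ref{lemma_Homogenization} and from the initial-condition concentration bound. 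I expect the main obstacle to be exactly the drift identification above: producing the PDE nonlinearity $\alpha(\bar v)(1-\bar z)-\beta(\bar v)\bar z$ out of the spatial average of the products $\alpha(V^{(k)})(1-Z^{(k)})$ with an error uniform in $t\in[0,T]$ and $n$ forces one to combine Lemma \ref{lemma_Homogenization}, the $p$-independent gradient bound on $\bar v$, and the $O(h^p)$-localization of the weights $\Phi_{h,p}$ all at once; a secondary difficulty is the bookkeeping that makes every almost-sure estimate hold for all $n$ simultaneously, already prepared via Lemma \ref{Lemma_spatial_law} and Borel--Cantelli earlier in the section.
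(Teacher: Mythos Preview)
Your proposal is correct and follows essentially the same Gronwall strategy as the paper: both use Lemma \ref{Lemma_spatial_law} for the compensated Poisson fluctuations, Lemma \ref{lemma_Homogenization} for $|V^{(k)}-\bar v(hk)|$, the gradient bound on $\bar v$, an initial-condition concentration bound, and then close with Gronwall on $\|v-\bar v\|+\|z-\bar z\|$. The only organizational difference is that you first identify the drift of $\bar z$ as $\alpha(\bar v)(1-\bar z)-\beta(\bar v)\bar z$ up to error and then compare with the $(v,z)$ equation, whereas the paper threads $v$ into the middle of its chain of triangle inequalities (e.g.\ $\beta(V^{(j)})Z^{(j)}\to\beta(\bar v(hj))Z^{(j)}\to\beta(v(hj))Z^{(j)}\to\cdots$); the ingredients and resulting error scales are identical.
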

\begin{proof}
Throughout the proof, we use $\|\cdot\|$ to denote the $L^\infty(\mathbb{S})$ norm. We reuse $C, C_T, C_\omega$ and $C_{T,\omega}$ as constants which, as usual, never depend on $h, n, t,k,$ or $p$ in any way. The argument is elementary, yet technical. The technical details of the argument lie in comparing $z(x)$ to $\bar{z}(x)$, wherein we use eight successive triangle inequalities. The end result is that for all $x$ and $t \in [0,T]$,
\begin{align}
\begin{aligned}
\|z_t-\bar{z}_t\| \leq C_\omega (h^p\vee h^{1/2-p/2} \vee h^{\rho -p})\log(h^{-1}) \\ +\int_0^tC\left(\|z_s-\bar{z}_s\|+\|v_s-\bar{v}_s\|\right)ds.
\end{aligned}
\label{Pre-Gronwall}
\end{align}
Since we also have the inequality

\begin{align}
\|v_t-\bar{v}_t\| \leq \int_0^tC\left(\|z_s-\bar{z}_s\|+\|v_s-\bar{v}_s\|\right)ds,
\end{align}
which is straight forward to establish, we may then apply Gronwall's inequality to the quantity $\|v_t-\bar{v}_t\| +\|z_t-\bar{z}_t\| $.

Now we carry our the comparison of $\bar{z}$ to $z$. We use that $z$ and $v$ are locally Lipschitz continuous (and thus globally over a compact interval) which holds since we have assumed $z_0 \in \mathcal{C}^1(\mathbb{S})$ and $v_0 \in \mathcal{C}^2(\mathbb{S})$. 

For each $x\in\mathbb{S}$, there exists a $k=k_x \in \{0,...,n-1\}$ such that $|kh-x|\leq Lh.$ Using $z \in \mathcal{C}^1$, there exists a constant $C_T$  such that
\begin{align}
\begin{aligned}
\max_{x \in \mathbb{S}}|z(x)-z(h k_x)| \leq C_Th \quad \text{for all }n\in\mathbb{N}.
\end{aligned}
\end{align} 
Moreover, from the regularity established in \eqref{Inq:Regularit z}, we also have that for pairs $\{k_x,x\}$ that $|\bar{z}(x)-\bar{z}(hk_x)| \leq Ch^{1-p}$. Since these are small in $h$, our main concern then is in estimating $|\bar{z}(hk)-z(hk)|$. Using the definition of $\bar{z}$ we have
\begin{equation}
\bar{z}(hk)-z(hk) =
N_{h,p}^{-1}\sum_{j=0}^{n-1}\Phi_{h,p}(hk-hj)[Z^{(j)}-z(hj)+z(hj)-z(hk)].
\label{E:zbar_minus_z}
\end{equation}
Note that in \eqref{E:zbar_minus_z} we can replace $\Phi_{h,p}$ by the indicator function $1_{h,p}$ without changing the equation, because $\Phi_{h,p}$ is being evaluated at $h$ times an integer in each term in \eqref{E:zbar_minus_z}. Here the indicator function $1_{h,p}$ is
\begin{equation}
    1_{h,p}(x):=
    \begin{cases}
    1 & \quad \text{if } 1-[h^{p-1}/2]h \leq x \leq [h^{p-1}/2]h \\
    0 & \quad  \text{Otherwise}
    \end{cases}.
\end{equation}
That is, we have for all $k \in \{0,1,2,\ldots,n-1\}$, $\Phi_{h,p}(hk)=1_{h,p}(hk)$.
Again using the fact that $z$ is locally Lipschitz along with the definition of $N_{h,p}$ in \eqref{E: Size of N} implies the existence of a constant $C_T$ such that\begin{equation}\left|\sum_{j=0}^{n-1}\dfrac{1_{h,p}(hk-hj)}{N_{h,p}}[z(hj)-z(hk)]\right| \leq C_Th^p.\end{equation} Therefore we concern ourselves with the terms of the sum that look like  $Z^{(j)}-z(hj)$ in \eqref{E:zbar_minus_z}. Here we use the definitions of these terms (We only write down the $\beta$ term as the $\alpha$ term is handled analogously. For the $\alpha$ term we put $\cdots$).  
\begin{equation}
\begin{aligned}
    Z^{(j)}-z(hj)&=Z_0^{(j)}-z_0(hj)+\cdots \\ -&\left(Y^-_j\left(\int_{0}^{t}\beta(V^{(j)}_s)Z^{(j)}_s ds\right)-\int_{0}^{t}\beta(v_s(hj))z_s(hj) ds\right).
    \end{aligned}
    \label{E:with_Pois}
\end{equation}
We have assumed that $Z_0^{(j)}\sim \text{Ber}(z_0(hj))$ according to our Assumption \ref{A:initial condtions_Discrete}. Thus for all $k\in\{0,1,\ldots, n-1\}$, there exists a $C_\omega$ which is finite almost surely such that \begin{equation} \left|\sum_{j=0}^{n-1}\dfrac{1_{h,p}(hk-hj)}{N_{h,p}}[Z_0^{(j)}-z_0(hj)]\right| \leq C_{\omega} h^{1/2-p/2} \log(h^{-1}).
\end{equation} 
To bound the second part of the right hand side of \eqref{E:with_Pois}, we can write 
\begin{equation}
\begin{aligned}
&Y^-_k\left(\int_{0}^{t}\beta(V^{(j)}_s)Z^{(j)}_s ds\right)-\int_{0}^{t}\beta(v_s(hj))z_s(hj) ds\\
 &=   Y^-_j\left(\int_{0}^{t}\beta(V^{(j)}_s)Z^{(j)}_s ds\right)-\int_{0}^{t}\beta(V^{(j)}_s)Z^{(j)}_sds  \\+&\int_{0}^{t}\beta(V^{(j)}_s)Z^{(j)}_sds -\int_{0}^{t}\beta(v_s(hj))z_s(hj) ds.
    \end{aligned}
    \label{E:make_into_comp}
\end{equation}
Lemma \ref{Lemma_spatial_law} 
uniformly bounds the sum of compensated Poisson process defined by the first two terms in the right hand side of  \eqref{E:make_into_comp}. Specifically, we have the existence of a finite $C_{\omega,T}$ such that for all $n\in\mathbb{N}$, $k\in\{0,1,\cdots, n-1\}$ and $p\in(0,1)$,
\begin{equation}
\begin{aligned}
 \sup_{t \in[0,T]}\left|\sum_{j=0}^{n-1}\dfrac{1_{h,p}(hk-hj)}{N_{h,p}}\left[Y^-_j\left(\int_{0}^{t}\beta(V^{(j)}_s)Z^{(j)}_s ds\right)-\int_{0}^{t}\beta(V^{(j)}_s)Z^{(j)}_sds \right]\right|
\\ 
\leq C_{\omega,T} h^{1/2-p/2}\log(h^{-1}).
 \end{aligned}
 \label{E:Uses_Lemma1_2}
\end{equation}
To bound the second to two terms on the right hand side of  \eqref{E:make_into_comp}, we write 
\begin{equation}
\begin{aligned}
&\int_{0}^{t}\beta(V^{(j)}_s)Z^{(j)}_sds -\int_{0}^{t}\beta(v_s(hj))z_s(hj) ds\\ &=\int_{0}^{t}\beta(V^{(j)}_s)Z^{(j)}_sds-\int_{0}^{t}\beta(\bar{v}_s(hj))Z^{(j)}_sds\\+&\int_{0}^{t}\beta(\bar{v}_s(hj))Z^{(j)}_sds-\int_{0}^{t}\beta(v_s(hj))z_s(hj)ds.
\end{aligned}
\label{E:use_intermediate}
\end{equation}
For the first two terms on the right hand side of \eqref{E:use_intermediate}, our intermediate result in Lemma \ref{lemma_Homogenization}, as well as our assumption that $\alpha,\beta \in  \mathcal{C}^{1}(\mathbb{R})$ give us that for all $p\in [0,1)$ and $\rho \in (p,1)$, there exists $C_{\omega,T,\rho}$ such that
\begin{equation}
\begin{aligned}
   \sup_{t \in [0,T]}\left|\sum_{j=0}^{n-1}\dfrac{1_{h,p}(hk-hj)}{N_{h,p}} \left[\int_{0}^{t}\beta(V^{(j)}_s)Z^{(j)}_sds-\int_{0}^{t}\beta(\bar{v}_s(hj))Z^{(j)}_sds\right]\right| \\ \leq  C_{\omega,T,\rho} h^{p\wedge (\rho-p)}\left|\log(h^{-1})\right|
   \end{aligned}
\end{equation}
for all  $n\in\mathbb{N}$ and $k\in\{0,1,\cdots, n-1\}$.

Now to bound the second two terms on the right hand side of \eqref{E:use_intermediate}, we write
\begin{equation}
\begin{aligned}
&\int_{0}^{t}\beta(\bar{v}_s(hj))Z^{(j)}_sds-\int_{0}^{t}\beta(v_s(hj))z_s(hj)ds \\
&=\int_{0}^{t}\beta(\bar{v}_s(hj))Z^{(j)}_sds-\int_{0}^{t}\beta(v(hj))Z^{(j)}_sds \\+&\int_{0}^{t}\beta(v(hj))Z^{(j)}_sds-\int_{0}^{t}\beta(v_s(hj))z_s(hj)ds
    \end{aligned}
    \label{E:break_up_product}
\end{equation}
We bound the first two terms in the right hand side of \eqref{E:break_up_product} using the Lipschitz constant for $\beta$ like so,
\begin{equation}
\begin{aligned}
\left|\sum_{j=0}^{n-1}\dfrac{1_{h,p}(hk-hj)}{N_{h,p}} \left[\int_{0}^{t}\beta(\bar{v}_s(hj))Z^{(j)}_sds-\int_{0}^{t}\beta(v_s(hj)Z^{(j)}_sds \right]\right| 
\\
\leq C\int_0^t\|\bar{v}_s-v_s\|ds.
\end{aligned}
\end{equation}
The term  now matches the form we are trying to obtain in \eqref{Pre-Gronwall}. Now to bound the second two terms in \eqref{E:break_up_product},  we write
\begin{equation}
\begin{aligned}
    &\int_{0}^{t}\beta(v(hj))Z^{(j)}_sds-\int_{0}^{t}\beta(v_s(hj))z_s(hj)ds\\ &=\int_{0}^{t}\beta(v(hj))Z^{(j)}_sds-\int_{0}^{t}\beta(v(hk))Z^{(j)}_sds\\
    &+\int_{0}^{t}\beta(v(hk))Z^{(j)}_sds-\int_{0}^{t}\beta(v_s(hj))z_s(hj)ds.
    \end{aligned}
    \label{E:Remove_averaging}
\end{equation}
The first two terms  in \eqref{E:Remove_averaging} are bounded by the assumption that $\alpha, \beta \in \mathcal{C}^1(\mathbb{R})$ and that $v \in \mathcal{C}^{1}(\mathbb{S})$:
\begin{equation}
    \sup_{t \in[0,T]}\left| \sum_{j=0}^{n-1}\dfrac{1_{h,p}(hk-hj)}{N_{h,p}}\left[\int_{0}^{t}\beta(v(hj))Z^{(j)}_sds-\int_{0}^{t}\beta(v_s(hk))Z^{(j)}_sds\right]\right| \leq C_Th^p
\end{equation}
Now to bound the second two terms in \eqref{E:Remove_averaging} we write
\begin{equation}
\begin{aligned}
&    \int_{0}^{t}\beta(v(hk))Z^{(j)}_sds-\int_{0}^{t}\beta(v_s(hj))z_s(hj))ds 
\\&=    \int_{0}^{t}\beta(v(hk))Z^{(j)}_sds-\int_{0}^{t}\beta(v_s(hk))z_s(hk)ds
\\
+&\int_{0}^{t}\beta(v_s(hk))z_s(hk)ds-\int_{0}^{t}\beta(v_s(hj))z_s(hj)ds
    \end{aligned}
\end{equation}
Using the definition of local averages in \eqref{Def:localspatial_Z}, the first two terms become 
\begin{equation}
\begin{aligned}
\left|\sum_{j=0}^{n-1}\dfrac{1_{h,p}(hk-hj)}{N_{h,p}} \left[\int_{0}^{t}\beta(v(hk))Z^{(j)}_sds-\int_{0}^{t}\beta(v_s(hk))z_s(hk)ds\right] \right| \\ \leq C_T\int_0^{t}\|\bar{z}_s-z_s\|ds, 
\end{aligned}
\end{equation}
The last two terms are bounded using the fact that $z,v\in \mathcal{C}^1(\mathbb{S})$ 
\begin{equation}
\begin{aligned}
\left|\sum_{j=0}^{n-1}\dfrac{1_{h,p}(hk-hj)}{N_{h,p}} \left[\int_{0}^{t}\beta(v(hk))z_s(hk)ds-\int_{0}^{t}\beta(v_s(hj))z_s(hj)ds\right] \right| \\ \leq C_T h^{p}, 
\end{aligned}
\end{equation}
establishing \eqref{Pre-Gronwall} and completing the proof.
\end{proof}

\begin{proof}[Proof of Theorem \ref{T:Main}] The proof for the toy model follows directly from Lemmas \ref{lemma_Homogenization} and \ref{L:2}. The proof for the general model  \eqref{eq:general_model V}-\eqref{eq:general_model Z} follows from the same argument without much change.

We use the general definition \eqref{Def:new localspatial_Z} for the local averages. The local averages would be used to define an intermediate PDE for $\bar{v}$. 
Using the linearity of the Laplacian, the ansatz would then be
\begin{equation}
\label{Def:new localspatial_Z}
    \overline{V}^{(k)}:=\bar{v}(hk)+h^2\sum_{i=1}^I\sum_{j=1}^J\chi_{i,j}^{(k)}g_{i,j}(\bar{v}(hk))
\end{equation} 
where 
\begin{equation}
D(\chi_{i,j}^{(k+1)}-2\chi_{i,j}^{(k)}+\chi_{i,j}^{(k-1)})=\overline{Z}_{i,j}^{(k)}-Z_{i,j}^{(k)}.
\end{equation}
Lemma \ref{L:consistency} holds for the general model with this ansatz.
We then compare $\bar{v}$ with $V$ as in Lemma \ref{lemma_Homogenization} and  compare $(\bar{v},\bar{z}_{i,j})$ with $(v,z_{i,j})$ as in Lemma \ref{L:2}. The constants would then depend on $I,J$ which we assumed are fixed.
\end{proof}

\section{Numerical Simulations}\label{S:simulation}
Next we perform numerical simulations for the stochastic system. The specific system we simulate is given by 
\begin{equation}
\begin{aligned}
\dot{V}&=h^{-2}(V^{(k+1)}-2V^{(k)}+V^{(k-1)})+Z^{(k)}(1-V^{(k)})-\frac{1}{10}V^{(k)}
\\
Z^{(k)}&=Z^{(k)}_0+Y^{(k)}_{+}\left(\int_0^{t}\alpha(V^{(k)})(1-Z^{(k)})ds\right) -Y^{(k)}_{-}\left(\int_0^{t}\beta(V^{(k)})Z^{(k)}ds\right),
\end{aligned}
\label{E:Numerical Object}
\end{equation}
where $\alpha(v)=e^{10(v-0.5)}$ and $\beta(v)=e^{-10(v-0.5)}$. This specific system is a special case of the general model in \eqref{eq:general_model V} and \eqref{eq:general_model Z} by taking the following. With $J=I=2$, set $g_{1,1}(v)=(1-v)$ and $g_{2,1}=-av$ where $a=1/10$. Further set $g_{1,2}\equiv g_{2,2}\equiv 0$. The rate functions are given by $A_{1,(1,2)}=\alpha$ and $A_{1,(2,1)}=\beta$. For all $(a,j) \in \{1,2\}\times\{1,2\}$ we have that $A_{2,(a,j)}=0$. Because there is no stochastic term associated with $g_{2,1}$, we would need to initialize $Z_{2,1}=1$. Since $A_{2,(a,j)}=0$, we have $Z_{2,1}=1$ for all time. In any case, it is notionally simpler to work with \eqref{E:Numerical Object}. 

We set $L=16$ and
the initial conditions to be the following: 
\begin{equation}
\begin{aligned}
    V^{(k)}_0=&\,e^{-((k-(Ln-1)/2)/n)^2}\\
    Z^{(k)}_0 \sim& \,\text{Bernouli}\left(\dfrac{\alpha(V_0^{(k)})}{\alpha(V^{(k)}_0)+\beta(V^{(k)}_0)}\right).
\end{aligned}
\end{equation}
Note that there are $Ln=16n$ lattice sites in total, with periodic boundary conditions.

We use a modified version of the SSA with Poisson thinning to simulate \eqref{E:Numerical Object}. The use of thinning to simulate PDMPs is a natural idea and has been studied in \citep{Thomas_thinning}. Poisson thinning to simulate non-homogeneous Poisson processes originates in \citep{Poisson_Thinning}. We find it useful to incorporate in our simulation method. 

Our algorithm is equivalent to algorithms that require solving a hitting time problem. See algorithms described in \citep{reidler2013numerics} and \citep{Ding} for examples. In \citep{reidler2013numerics}, the author proves strong convergence of the numerical solution as the time step goes to $0$ for a PDMP with a single rate. In \citep{Ding}, the authors show how a PDMP with multiple rates may be simulated by solving the hitting time problem for one global rate. Our algorithm is similar in that we have a global rate; however, we avoid needing to solve a hitting time problem by using Poisson thinning. We adopt the terminology of \citep{reidler2013numerics} in calling our algorithm pseudo-exact, but since we also use thinning, we refer to our first algorithm as the \textit{Psuedo-Exact Thinning (PET) Algorithm}. We then introduce a second algorithm, because of our PET Algorithm turns out to be slow as would any exact algorithm be for PDMP with many reaction rates. 

The second algorithm is not exact, and has a step less justified than but analogous to $\tau$ leaping. We thus refer to it as the \textit{Inexact Leaping (IL) Algorithm}. We note that $\tau$ leaping has been justified in the case of chemical reaction networks \citep{Gillespie-2005} so we suspect that IL can be justified as well. Having a fast algorithm would make it possible to use Monte-Carlo methods to explore better the behavior of stochastic ion channel models. We note that there appears to be no work studying the quality of $\tau$ leaping methods for PDMPs in the literature.

\medskip

{\bf Description of PET and IL Algorithms. } 
We first give some explanation of our PET algorithm, whose steps are explicitly written in in Algorithm I below. Since solutions $V$ of \eqref{E:Numerical Object}  stay bounded below $1$ and above $0$, we can define a local rate $\lambda :=\alpha(1)+\beta(0)$. The rate function gives an upper bound for the maximum rate of any of the Poisson processes in \eqref{E:Numerical Object}. We also set a global rate $\Lambda:=2nL\lambda$, which is an upper bound on the number of Poisson processes among all $Ln$ lattice sites. (The local and global rates may be adjusted at each time-step and one does not actually need totally boundedness of $V$, so long as one may estimate $V$ one time-step into the future.) Now suppose at some time $t_0 \geq 0$, we have an approximate solution of $(V_{t_0},Z_{t_0})$ which we call $(\hat{V}_{t_0},\hat{Z}_{t_0})$ . We generate an exponential random variables $\tau$ with rate $\Lambda$, which gives a lower bound on how long we must wait for any one of the $Z^{(k)}$ to switch (either from $0$ to $1$ or $1$ to $0$). Thus for $t \in [t_0,t_0+\tau)$, $Z^{(k)}$ remains constant for all $k.$ We use a numerically stable second order numerical ODE method such as the trapezoidal method to find $\hat{V}^{(k)}_{t_0+\tau}$ which we can do since $V^{(k)}_{t_0+\tau}=\lim_{t\to \tau}V^{(k)}_{t_0+t}$. We then pick an integer $k_0$ uniformly from $\{0,1,2,\ldots,n-1,\ldots,Ln-1\}$. If $\hat{Z}_{t_0}^{(k_0)}=0$ we generate a Bernoulli random variable with success probability $\frac{\alpha(\hat{V}^{(k_0)}_{t+\tau})}{2\lambda}$. If $Z_{t_0}^{(k_0)}=1$, we use $\frac{\beta(\hat{V}^{(k_0)}_{t+\tau})}{2\lambda}$ as the success probability. In either case, if successful, we set $\hat{Z}_{t_0+\tau}^{(k_0)}=1-\hat{Z}_{t_0}^{(k_0)}$, otherwise we change nothing. We repeat the procedure now starting from $t_0+\tau$ and iterate until we have simulated to a desired time. 

\medskip

{\bf Algorithm I: Psuedo-Exact Thinning (PET). } 
Fix a satisfactorily small time step $\Delta t$. Given we have the approximate solution at $(\hat{V}_{t_0}, \hat{Z}_{t_0})$ for some $t_0$, we generate the approximate solution $(\hat{V}_{t_0+\tau}, \hat{Z}_{t_0+\tau})$ at the potential next jump time:
\begin{itemize}
    \item {\bf Step 1}: Choose $\lambda$ such that $\lambda \geq \max_{k}\{\sup_{t \in [t_0,t_0+\Delta t]}\alpha(V^{(k)}_{t})(1-Z^{(k)}_{t_0}),\sup_{t \in [t_0,t_0+\Delta t]}\beta(V^{(k)}_{t})Z^{(k)}_{t_0}\}$, and set $\Lambda=2nL\lambda$.
    \item {\bf Step 2}: Generate random wait time $\tau \sim \text{Exp}\left(\Lambda\right)$ (exponential random variable with mean $1/\Lambda$). 
    \item {\bf Step 3}: Compute the numerical solution of $\hat{V}_{t_0+\tau}$ to second order using $\hat{Z}_{t_0}$ using increments equal or smaller than $\min\{\Delta t,\tau\}$.
    \item{\bf Step 4:} Choose integer $k$ uniformly from $\{0,1,2,\ldots, nL-1\}$ at random. Set $\alpha^+_k:=\alpha(\hat{V}_{t_0+\tau}^{(k)})(1-\hat{Z}_{t_0}^{(k)})$ and $\alpha^-_k:=\beta(\hat{V}_{t_0+\tau}^{(k)})\hat{Z}^{(k)}_{t_0}$,  and then choose  $y \sim \text{Bernouli}\left(\frac{\alpha^++\alpha^-}{2\lambda}\right)$.
    \item{\bf Step 5}: If $y=1$, set $\hat{Z}_{t_0+\tau}^{(k)}=1-\hat{Z}_{t_0}^{(k)}.$ If $y=0$, set $\hat{Z}_{t_0+\tau}\equiv\hat{Z}_{t_0+\tau}$. 
    \label{A:I}
\end{itemize}

\begin{remark}
    There may be a number of tweaks to the above algorithm one may use to make it more efficient practically speaking. For example, rather than picking $y \sim \text{Bernouli}\left(\frac{\alpha^++\alpha^-}{2\lambda}\right)$ in step 4, it may be advantageous to compute a uniform random variable in $[0,1]$, call it $y$, and compare $y$ to $\frac{\alpha^++\alpha^-}{2\lambda}$. This $y$ might then be pre-computed (before solving the PDE) and one might use knowledge of $V_{t_0}$, $Z_{t_0}$, and $\tau$ to estimate $V_{t_0+\tau}$. In certain cases $y$ may be obviously large enough or small enough that the outcome of $\hat{Z}^{(k)}_{t_0+\tau}$ can be decided without needing to perform Step 3. 
\end{remark}
There are obviously many tweaks of the algorithm above that are statistically equivalent, and one can choose the one most suitable for their numerical scheme. For example it is possible to pre-compute all random variables before iterating.
We have plans to explore improvements to and error analysis of the above algorithm in a future work. For now, we only note two things at a heuristic level. 

First we consider the computational complexity. Usually, when solving systems of ODEs like the one in \eqref{E:Numerical_Determinstic}, the computational complexity of the solver depends both on $\Delta t$, the time increment, and the dimension of the system. For solving a discretized heat equation (not as an approximation to the heat equation), for example, we expect the number of floating point operations to be roughly $O(n/\Delta t)$, and we may increase the $n$ without decreasing $\Delta t$ as long as we are using a stable numerical method. However, in simulating \eqref{E:Numerical Object} the global rate $\Lambda$ scales with $n$, $\tau\sim 1/n$. Therefore, the computational complexity will be $O(n^2)$. Thus when $n$ is increased, the time it takes to solve up to a fixed time grows quadratically in the number of ion channels, which can make it challenging to perform numerical experiments especially those involving a realistic number of ion channels. 

Second we consider the error. If a second order ODE solver with time step $\Delta t$ is being used, then the error generated in one time-step of the success probability of the Bernoulli trial is $O(\Delta t^2)$ i.e. $\alpha(\hat{V}_{\tau})\approx \alpha(V_\tau)+O(\Delta t^2).$  We thus expect a Bernouli trial to be "incorrect" about once every $O(1/\Delta t^2)$ steps. Fortunately, we only need to take $O(1/\Delta t)$ steps, so we expect the algorithm to simulate the statistical behavior of $Z$ exactly. This argument is far from a proof; however, we note that strong convergence as the time step goes to $0$ in a similar numerical method for PDMP with a single rate was proven in \citep{reidler2013numerics}.

 Since our PET Algorithm is slow relatively to a standard ODE solver where a fixed time-step may be used even for a larger and larger system size, we introduce a naive variation of PET in order to speed it up. Unfortunately this comes at a cost to its accuracy but we conjecture that is may be useful. Algorithm II, or the IL Algorithm fixes the time-step using an idea akin to that of tau leaping, but which is less justified in our setting. We note that their is essentially no formulation of $\tau$-leaping for piecewise deterministic Markov processes in the literature, and so we make an attempt at such a formulation for the express purpose of simulating our model for larger $n$. 
 
 Suppose we have a numerical solution at $t_0$ given by $(\hat{V}_{t_0}, \hat{Z}_{t_0})$. If we wish to fix a time step $\tau$, and we desire to compute $(\hat{V}_{t_0+\tau},\hat{Z}_{t_0+\tau})$, we run into the problem that multiple components of the process $\hat{Z}$ may have jumped over the interval of time of length $\tau.$ To compensate for this, we generate a Poisson distributed random variable with mean $\tau\Lambda$, which gives an upper bound on the number of firings that may have occurred over the time interval $\tau.$ After simulating to $\hat{V}_{t_0+\tau}$, we then perform thinning to update $\hat{Z}_{t_0+\tau}$. 

 \medskip

 {\bf Algorithm II: Inexact Leaping  (IL). } 
Given we have the approximate solution at $(\hat{V}_{t_0}, \hat{Z}_{t_0})$ for some $t_0$, we generate the approximate solution  $(\hat{V}_{t_0+\tau}, \hat{Z}_{t_0+\tau})$ for some fixed interval $\tau$:
\begin{itemize}
    \item {\bf Step 1}: Choose $\lambda$ such that $\lambda \geq \max_{k}\{\sup_{t \in [t_0,t+\tau]}\alpha(V^{(k)}_{t}),\sup_{t \in [t_0,t+\tau]}\beta(V^{(k)}_{t})\}$, and set $\Lambda=2nL\lambda$.
    \item {\bf Step 2}: Generate max number of possible jumps $m \sim \text{Poisson}\left(\tau{\Lambda}\right)$ (a Poisson random variable with mean $\tau\Lambda$.)
    \item {\bf Step 3}: Compute the numerical solution of $\hat{V}_{t_0+\tau}$ to second order using $\hat{Z}_{t_0}$.
    \item{\bf Step 4:} Choose i.i.d. integers $\{k_i\}_{i=1}^{m}$ uniformly from $\{0,1,2,\ldots, nL-1\}$. Set $\alpha^+_{i}:=\alpha(\hat{V}_{t_0+\tau}^{(k_i)})$ and $\alpha^-_i:=\beta(\hat{V}_{t_0+\tau}^{(k_i)})$ and $\tilde{Z}\equiv\hat{Z}_{t_0}$.
    
    \item{\bf Step 5}: Looping through $i \in \{1,\ldots, m\}$,  choose independently random variables $\{y_i\}_{i=1}^{m}$ such that $y_i \sim \text{Bernoulli}\left(\frac{\alpha_{i}^+(1-\tilde{Z}^{(k_i)})+\alpha_{i}^-\tilde{Z}^{(k_i)}}{2\lambda}\right)$. Set $\tilde{Z}^{(k_i)}=1-\tilde{Z}^{(k_i)}$ whenever $y_i=1$. Otherwise, do nothing. Then set $\hat{Z}_{t_0+\tau}\equiv\tilde{Z}.$
    \label{A:II}
\end{itemize}
We do not have a proof of convergence of the above algorithm, but we conjecture it is weak order one, since this is the case for $\tau$ leaping for chemical reaction networks \citep{Gillespie-2005}. By weak order one, we mean that, after taking the linear interpolant of $\hat{V}^{(k)}$ with respect to time, we have for each fixed $T$, $\sup_{t \in[0,T]}\left|\E[f(\hat{V}^{(k)})]-\E[f(V^{(k)})]\right| \sim O(\tau)$ as $\tau \to 0$ for $f$ in a suitable class of functions. Further, we conjecture the error converges uniformly over $h\in (0,h_0)$  for some positive constant $h_0<1$. In such a case, for sufficiently large $n$, it is better to use IL. The computational complexity of IL is much lower than that of PET for large systems, because we may fix the step size $\tau$, so that the number of flops is $O(n/\tau)$ as compared to $O(n^2)$. We save further exploration of the algorithm for a later work, as it is outside the scope of our primary objective of this paper. For now, we only show some qualitative comparisons between the algorithms below and use the algorithm to IL simulate our model for larger $n$ than we can with PET.

 \medskip

\noindent{\bf Numerical Results I.} First we display some numerical results simply using our PET Algorithm, Algorithm I. We compare simulations of \eqref{E:Numerical Object} with the following system of ODEs
\begin{equation}
\begin{aligned}
    \dot{U}&=h^{-2}(U^{(k+1)}-2U^{(k)}+U^{(k-1)})+S^{(k)}(1-U^{(k)})-\frac{1}{10}U^{(k)}
\\
S^{(k)}&=S^{(k)}_0+\int_0^{t}\alpha(U^{(k)})(1-S^{(k)})ds -\int_0^{t}\beta(U^{(k)})S^{(k)}ds.
\end{aligned}
\label{E:Numerical_Determinstic}
\end{equation}
Theorem \ref{T:Main} says that \eqref{E:Numerical Object}  should approach solutions of the PDE given by
\begin{equation}
\begin{aligned}
    \partial_t v&=\Delta v+z(1-v)-\frac{1}{10}v
\\
z&=z_0+\int_0^{t}\alpha(z)(1-z)ds -\int_0^{t}\beta(v)zds
\end{aligned}
\end{equation}
as $n\to \infty$ at a rate no slower than $n^{-1/3}|\log(n)|$. It is also clear that solutions of \eqref{E:Numerical_Determinstic} approach solutions of the PDE at least as fast. Thus we simply compare numerical solutions of \eqref{E:Numerical Object} with numerical solutions to \eqref{E:Numerical_Determinstic}.

\begin{figure}
    \captionsetup{width=.9\textwidth}
    \caption*{
        Quantifying the error using PET}
    \ 
    \includegraphics[ width=0.5\linewidth]{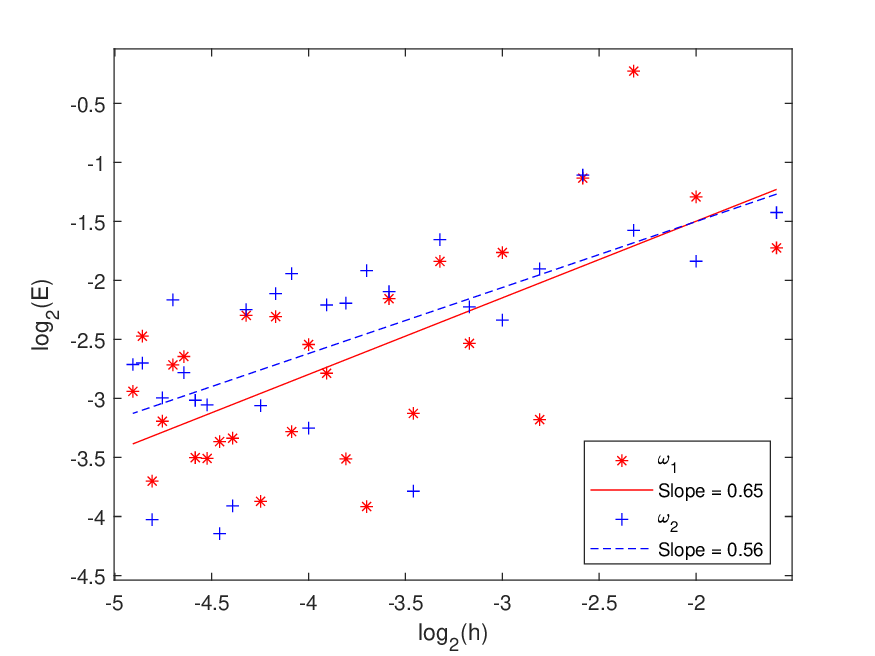}
    \includegraphics[, width=0.5\linewidth]{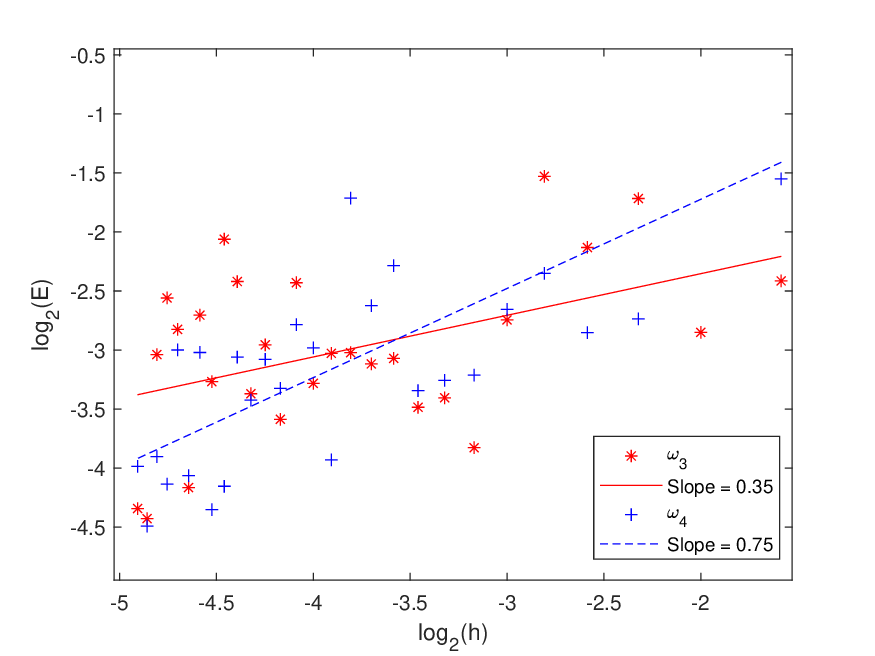}
    
    \includegraphics[ width=0.5\linewidth]{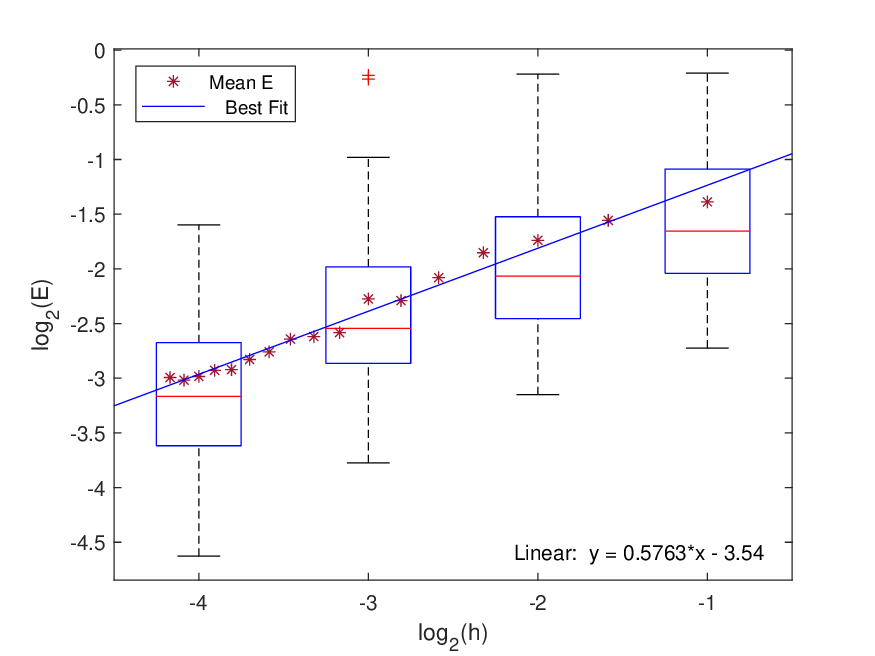}
    \includegraphics[height=0.25\textheight, width=0.5\linewidth]{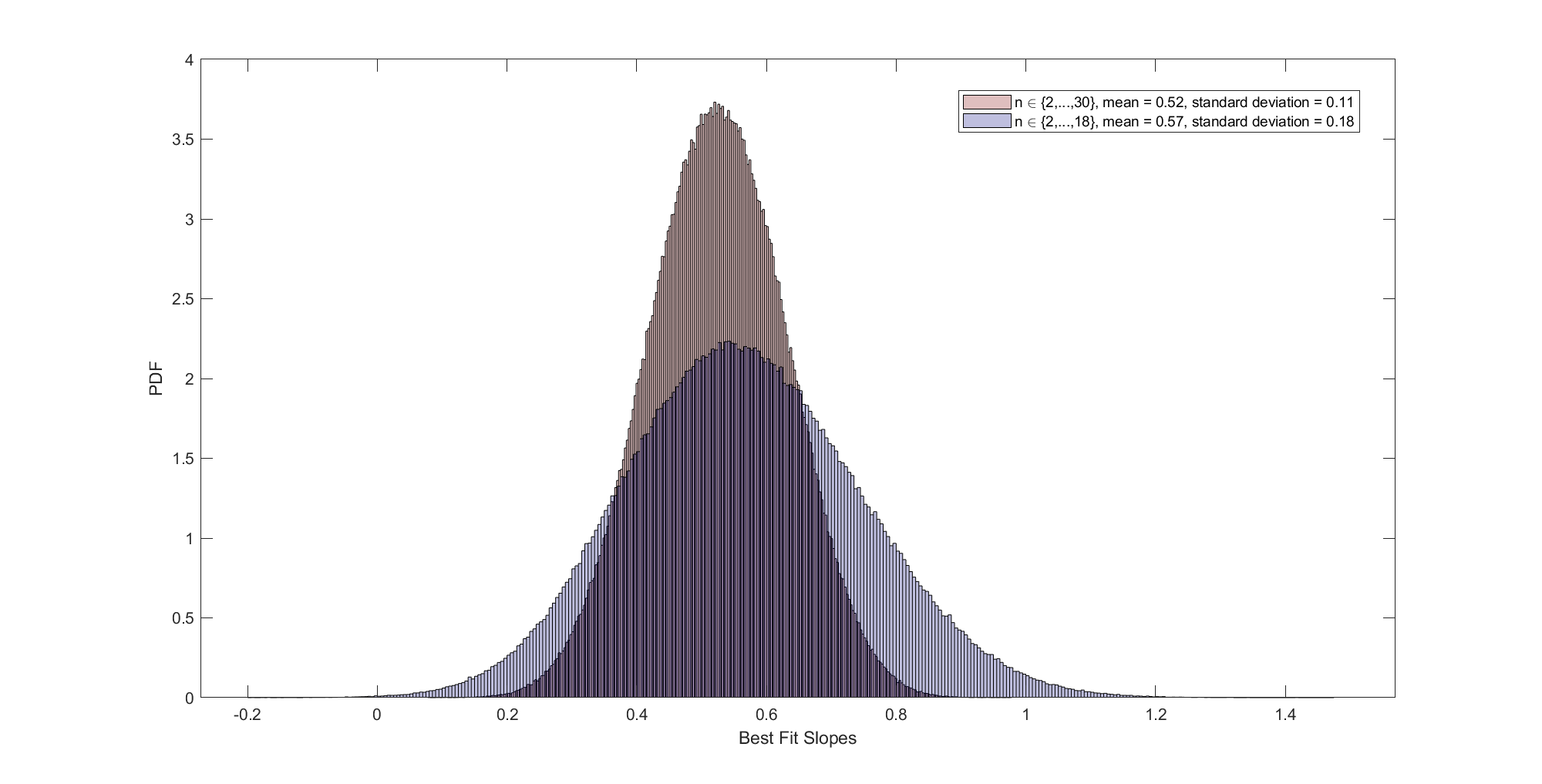}

    \caption{The slope of lines of best fit estimates the exponents of the rate of convergence of the error to $0$ for different experiments. In our experiments, that rate is consistent with the upper bound in Theorem \ref{T:Main}, i.e., almost sure convergence is observed for any given $\omega$. If we take a sample mean of $E(h)$ from $\Omega$, we see that a rate of $1/2$ captures the convergence of the sample mean to $0$. The two overlaid histograms show that the distribution of slopes depends upon the range over which $h$ is taken. For a range that includes more and smaller values of $h$, the distribution is tighter with a mean around $1/2$}
    \label{fig:experiments}
\end{figure}
            
\begin{figure}
    \caption*{Comparison of stochastic and deterministic for $h=1/16$} 
    \centering
    
    \includegraphics[width=\linewidth]{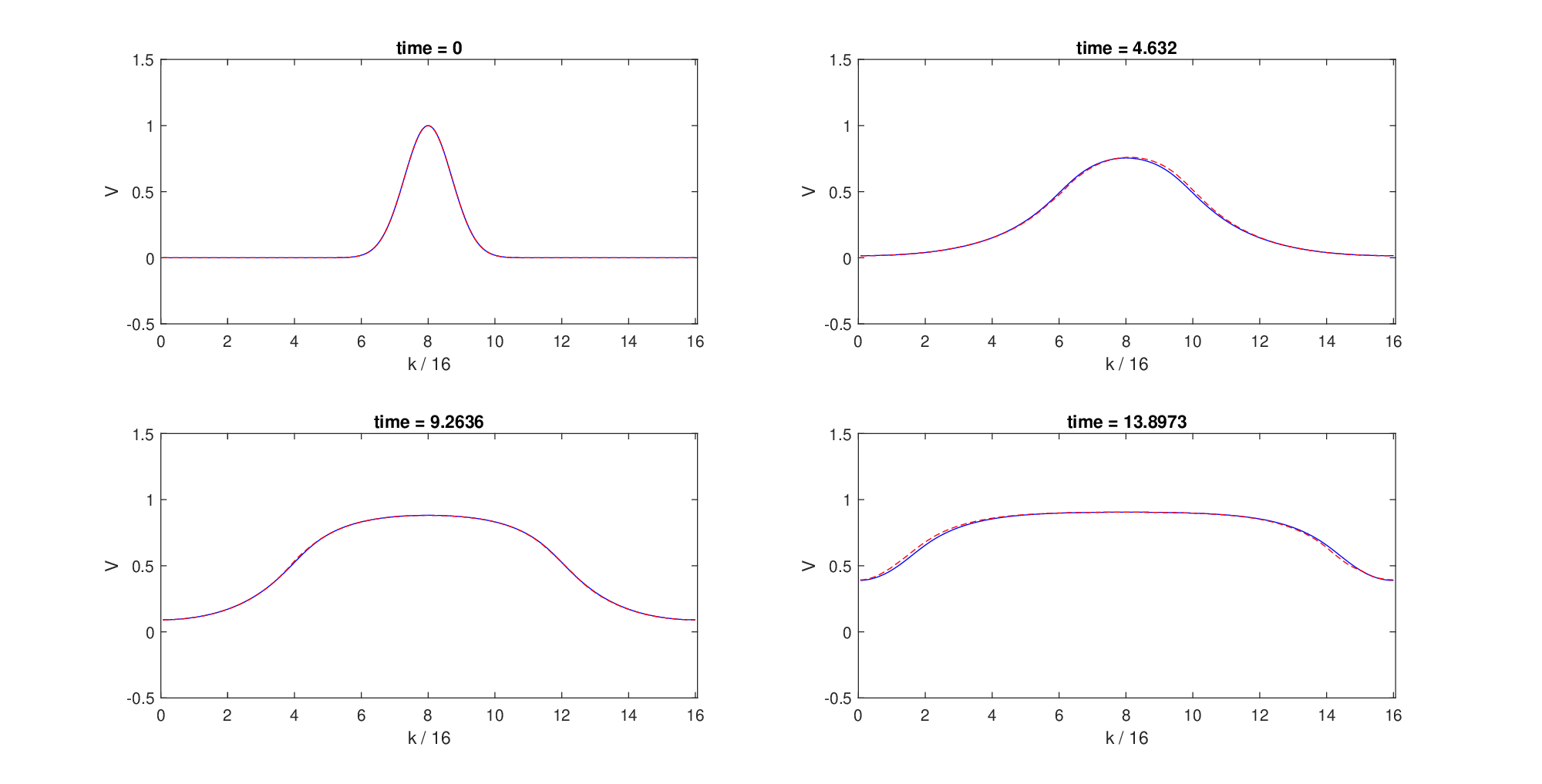}
    \caption{We compare the deterministic solution in solid blue with the stochastic solution in dashed red when $h=1/16$ at four time points. The solutions look virtually identical}
    
    \label{fig:Comparison}
\end{figure}
\begin{figure}
    \caption*{Comparison of stochastic and deterministic for $h=1/4$} 
    \centering
    
    \includegraphics[width=\linewidth]{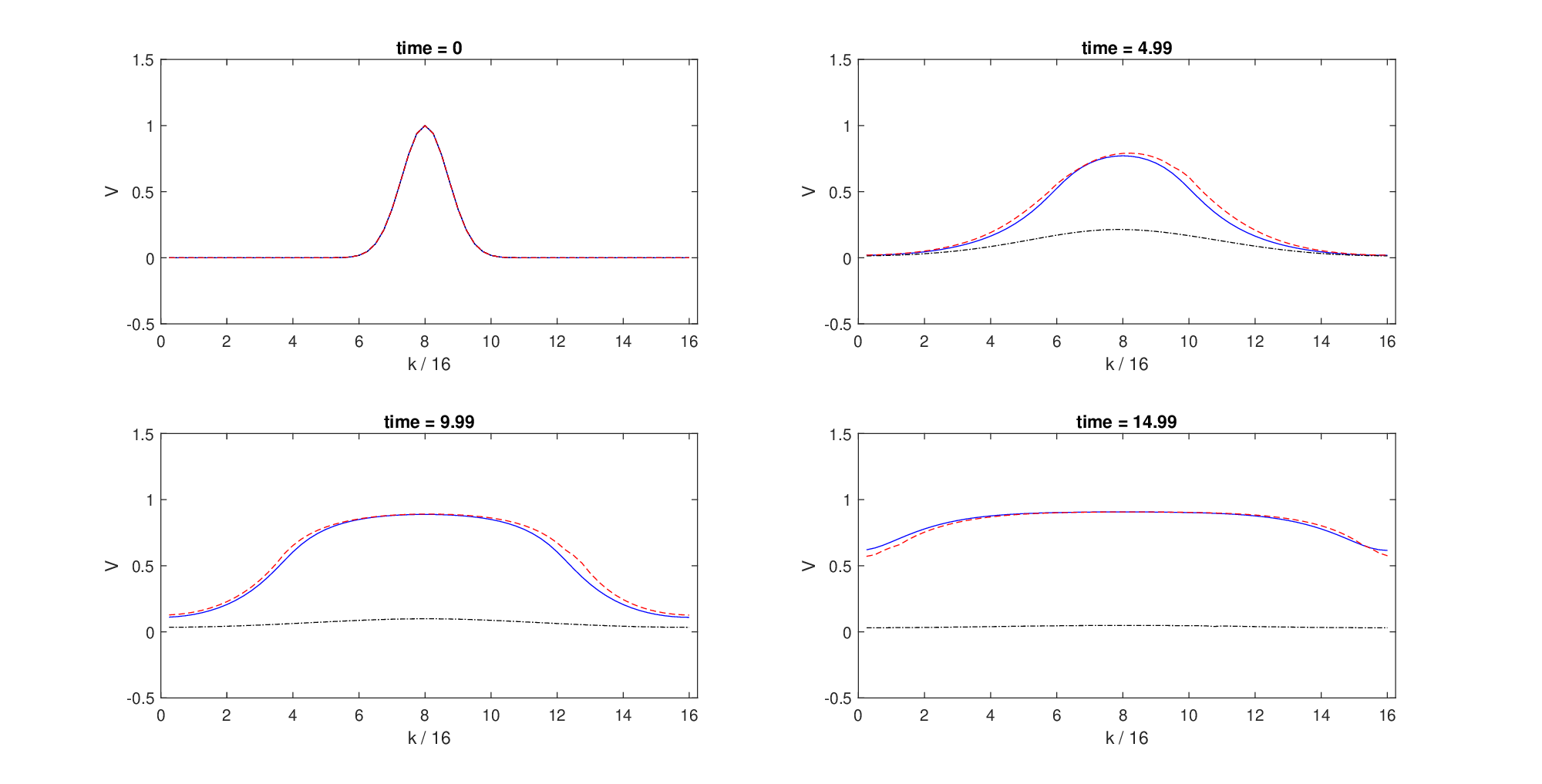}
    \caption{We compare the deterministic solution in solid blue with two stochastic realizations in dashed red and dashed black for $h=1/4$. One of the stochastic solutions matches closely the the deterministic solution. The other does not. We conclude that the stochastic solution has some probability of decaying away}
    \label{fig:Comparison_for_large_h}
\end{figure}

We define the \textit{error} as $E_\omega(h):=\sup_{t \in[0,15]}\max_{k}\left|\hat{V}^{(k)}-\hat{U}^{(k)}\right| $. The error $E_\omega(h)$ is thus a random function of $h$. In top two panels of Figure \ref{fig:experiments}, we perform four experiments $\omega_1,\omega_2,\omega_3,$ and $\omega_4$ (samples from space $\Omega$ in which the outcomes of the infinite array of Poison processes in Lemma \ref{Lemma_spatial_law} live ) measuring $E_{\omega_i}(h)$ for each $h$ ranging over $\{1/n\}_{n=2}^{30}$. We observe that $E$ does in fact generally decrease with $h$ in each experiment as demonstrated by the trend lines. This is evidence of almost sure convergence; however, the rate of decrease varies dramatically across experiments, and thus it is very challenging to draw conclusions regarding the sharpness of our result in Theorem \ref{T:Main}. This should be contrasted with the rate of weak convergence. For instance details in \citep{austin2008emergence} such as Lemma 4  suggest $h^{1/2}$ as the correct rate of weak convergence. In the bottom left panel of Figure \ref{fig:experiments}, we perform $100$ such experiments for each $h\in\{1/n\}_{n=2}^{18}$ and plot the sample mean of $E(h)$ vs. $h$ on a log-log plot. The slope of the trend-line gives strong evidence $h^{1/2}$ is correct for the weak convergence rate that is the rate at which the expectation of $E(h) \to 0$. Finally note that for fixed $h$ we may swap $E_{\omega_j}(h)$ with $E_{\omega_i}(h)$ for any two experiments to obtain another experiment. This gives us another albeit biased way to sample from the space $\Omega$ more efficiently by performing such swapping randomly. We generate a million experiments and plot the sample PDF for the slopes of best fit for $h$ over two ranges, $\{1/n\}_{n=2}^{18}$ and $\{1/n\}_{n=2}^{30}$ in the bottom right panel of Figure \ref{fig:experiments}.

In Figures \ref{fig:Comparison} and \ref{fig:Comparison_for_large_h} we compare the stochastic realizations of the solution profile with those of the deterministic system (which are close to the PDE). When $h=1/16,$ very little discrepancy is noticeable. When $h=1/4,$ we show two realization of the stochastic system, one in which the stochastic solution matches fairly closely to the deterministic solution and one in which it decays away. This may be explained by the fact that with so few ion channels, there is a moderate chance not enough ion channels  open when they should to sustain a large $V$.

\medskip
\noindent{\bf Numerical Results II.}
Next we focus on using the IL Algorithm, Algorithm II. We briefly compare the IL Algorithm to the PET Algorithm I and investigate the convergence of IL to the PDE for larger values of $n$. The set up in this section is identical to the previous. In Figure \ref{fig:Weak_error}, we compare the 
 numerical approximations $V^{(k)}$ for the index $k$ closest the $L/2$ over $15$ units of time using Algorithm I and II. We compute an \emph{algorithmic error} with the following procedure. We compute the sample mean from 100 samples of the numerical approximation for $V^{(k)}$ using Algorithm I and II. Then we take the maximum overtime. That is, if we define $V_{I}$ and $V_{II}$ to be the numerical approximation using Algorithm I and II respectively, we use 100 samples to estimate $\sup_{t \in[0,15]}\left|\mathbb{E}\left[V_{I}^{(k)}\right]-\mathbb{E}\left[V_{II}^{(k)}\right]\right|$. We do this for the four combinations of  different time steps $\tau=dt \in \{1/4,1/8\}$ and values of $h \in \{1/4,1/8\}$. The error as quantified in the figure appears to shrink when smaller $dt$ is taken for both values of $h$. However, we caution against drawing conclusions regarding convergence from the error quantity we calculate. To fully verify weak convergence as $\tau \to 0$ numerically, would require taking enough samples to estimate and resolve the error between the distributions of the numerical solutions of the two algorithm. However, such a task is large enough that we do not pursue it here. Instead we note that in Figure \ref{fig:Weak_error}, we see similar qualitative behavior. 
 
Finally we use IL with $\tau=1/8$ to study convergence of the stochastic system to the PDE in Figure \ref{fig:alg_II_strong} as we did with PET in Figure \ref{fig:experiments}. The only difference is that in the top panels, $h$ ranges over $ \{1/n\}_{n=2}^{50}$. In the bottom left panel, we take the mean of the error from 100 samples again but for  $h \in \{1/n\}_{n=2}^{30}$. The results we see are comparable to those we obtained with PET. Finally, in the bottom right panel we create another histogram of the slopes of a million experiments for $h \in \{1/n\}_{n=2}^{30}$ via swapping as we did before, but now we compare this to equivalent histogram in Figure \ref{fig:experiments}.

\begin{figure}

    \caption*{Comparing PET and IL}
    \includegraphics[height=0.25\textheight, width=0.5\linewidth]{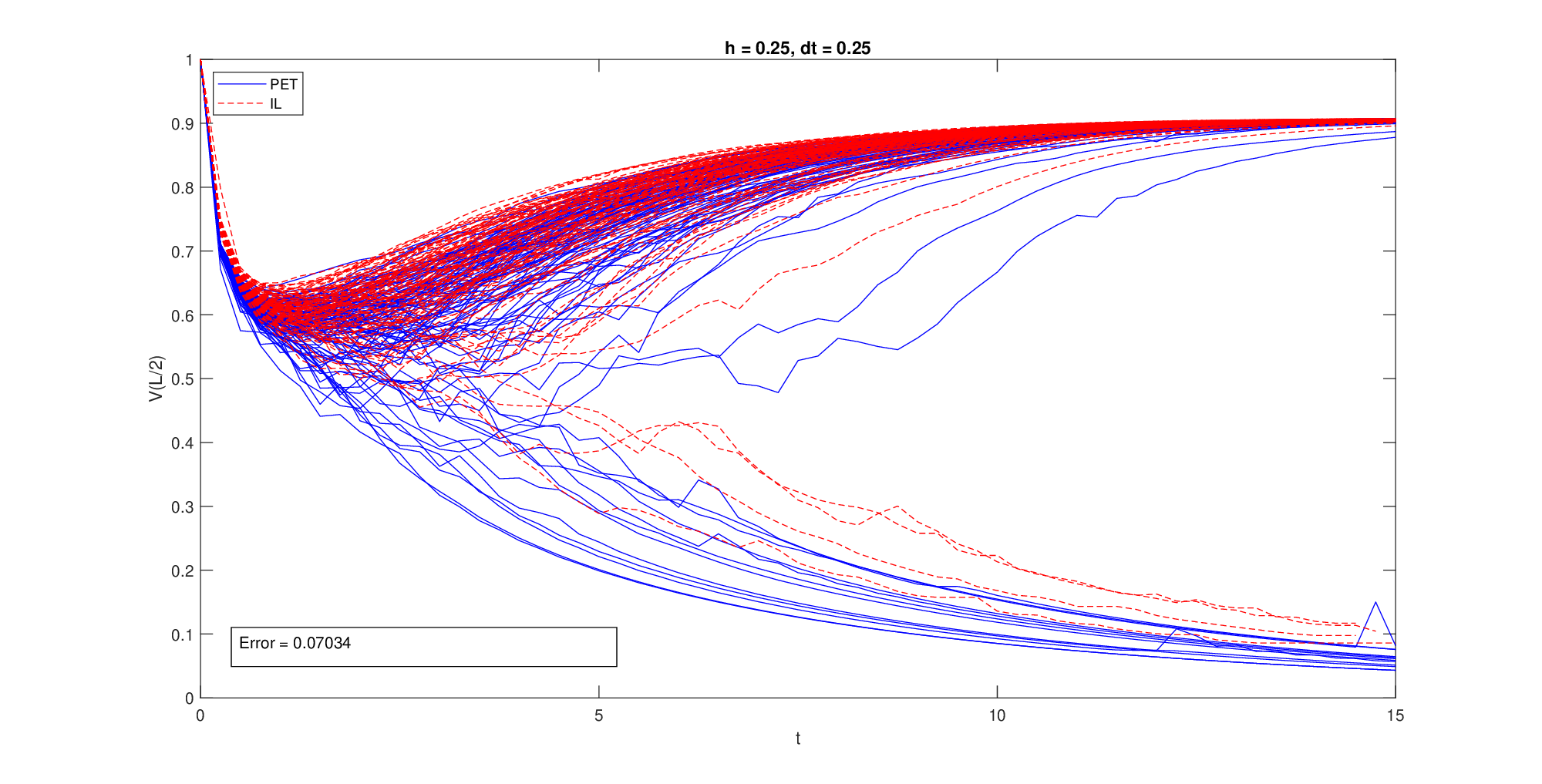}
    \includegraphics[height=0.25\textheight, width=0.5\linewidth]{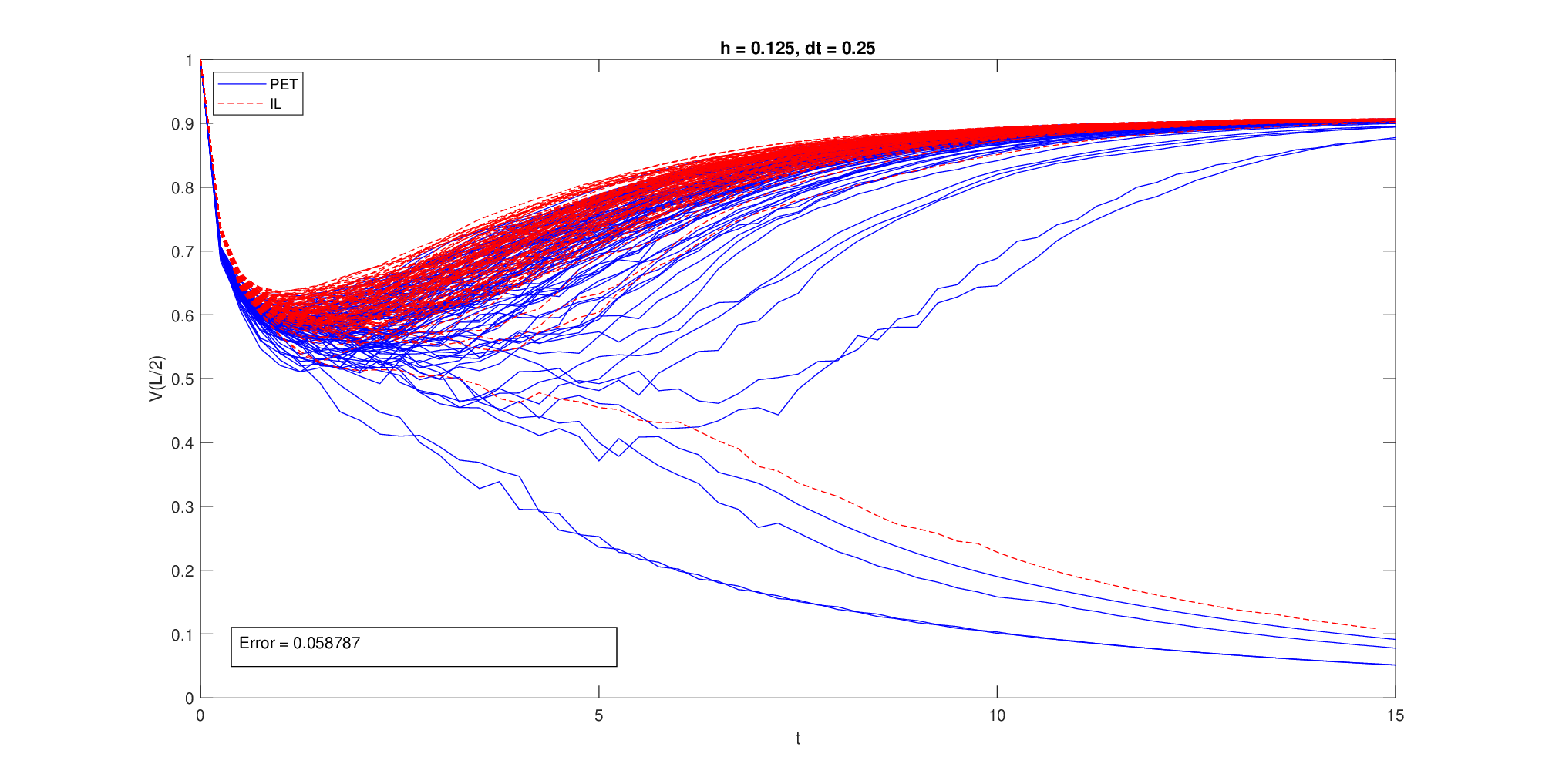}

    \includegraphics[height=0.25\textheight, width=0.5\linewidth]{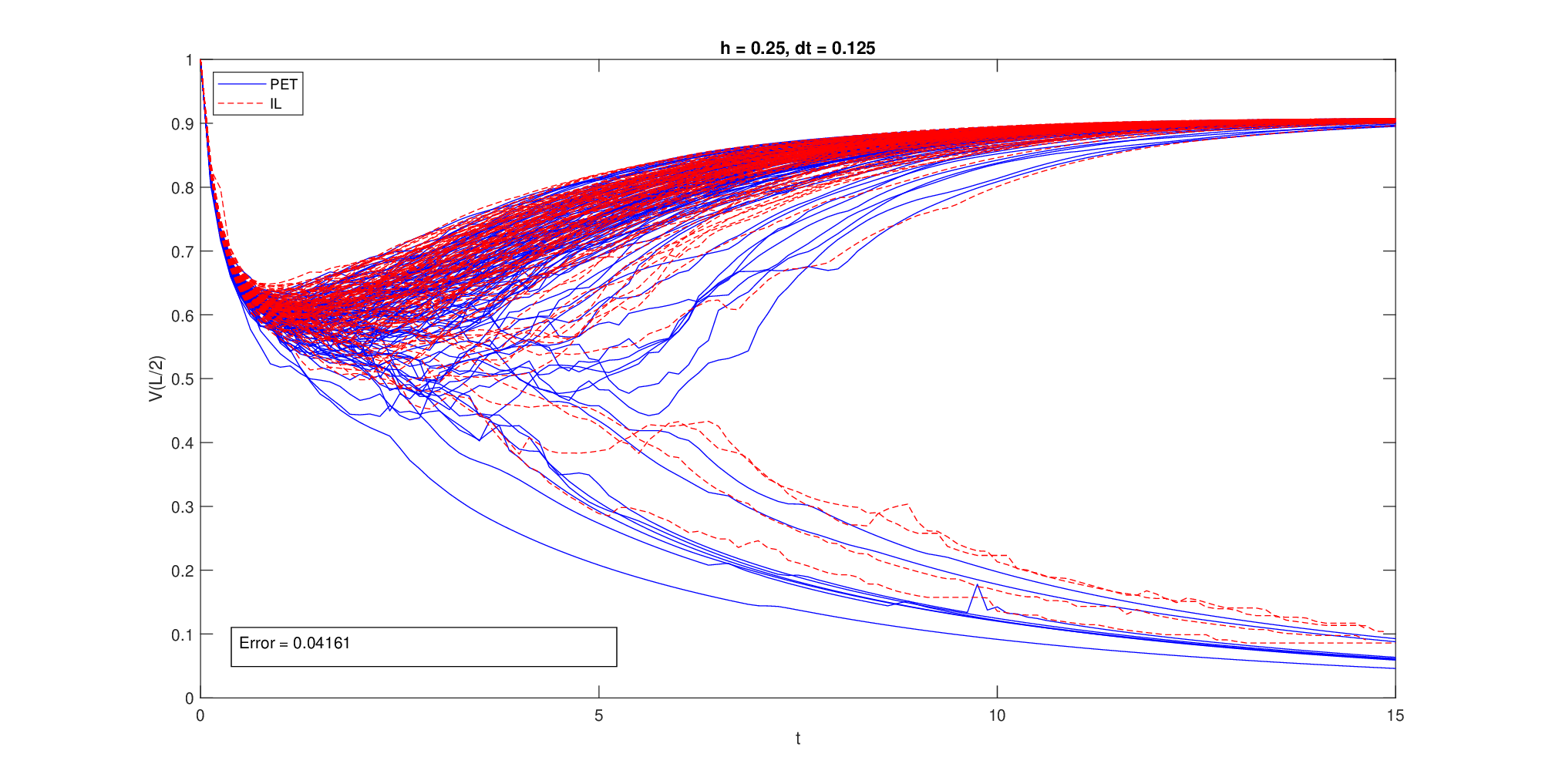}
    \includegraphics[height=0.25\textheight, width=0.5\linewidth]{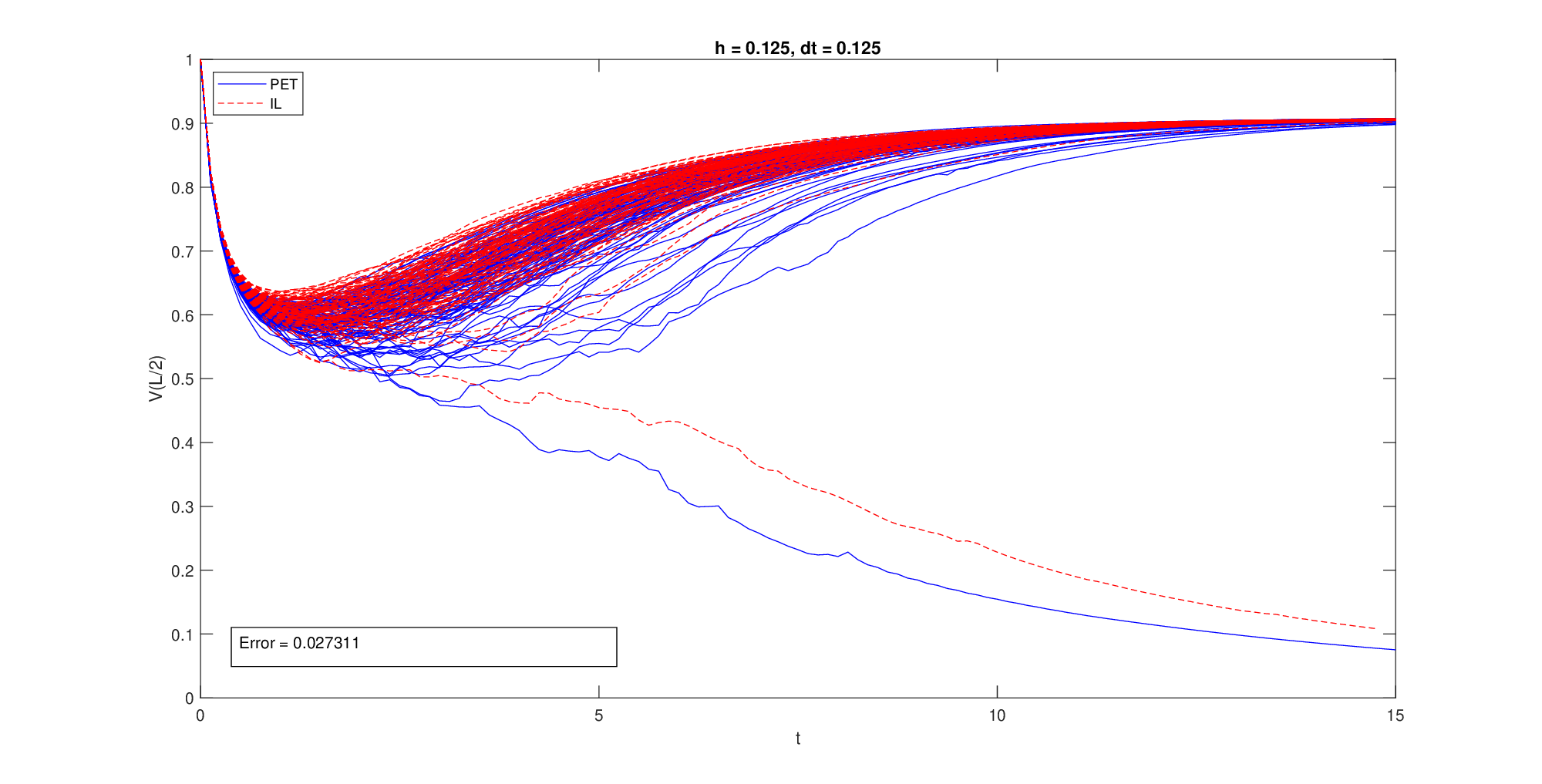}

    \caption{For smaller $h$, $V^{(k)}$ decays less frequently to near $0$. Both algorithms capture this trend,  but IL agrees more with PET when the smaller time step $1/8$ is taken}
    \label{fig:Weak_error}
\end{figure}

\begin{figure}
 \
       \caption*{Quantifying the error using IL}

    \includegraphics[ width=0.5\linewidth]{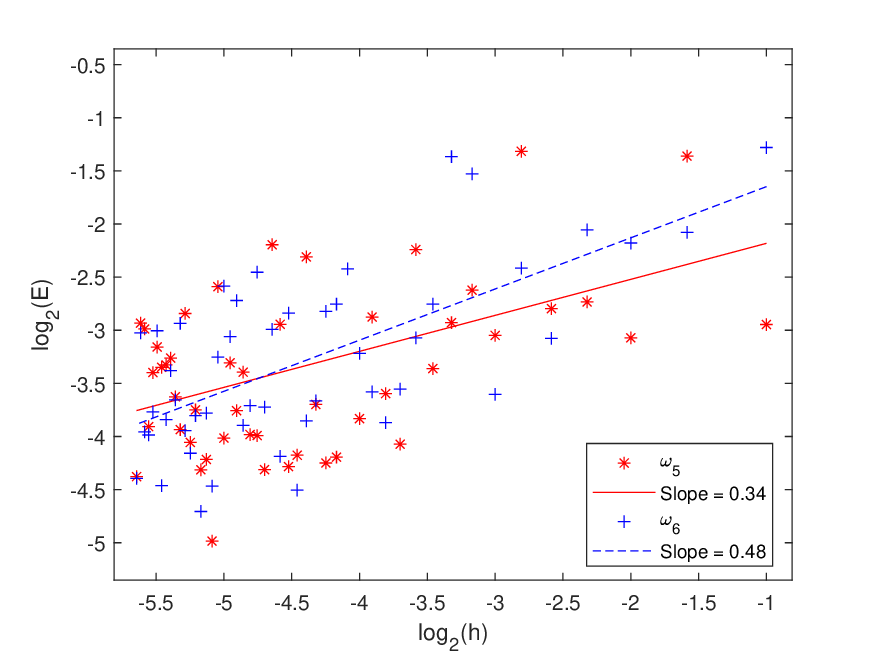}
    \includegraphics[ width=0.5\linewidth]{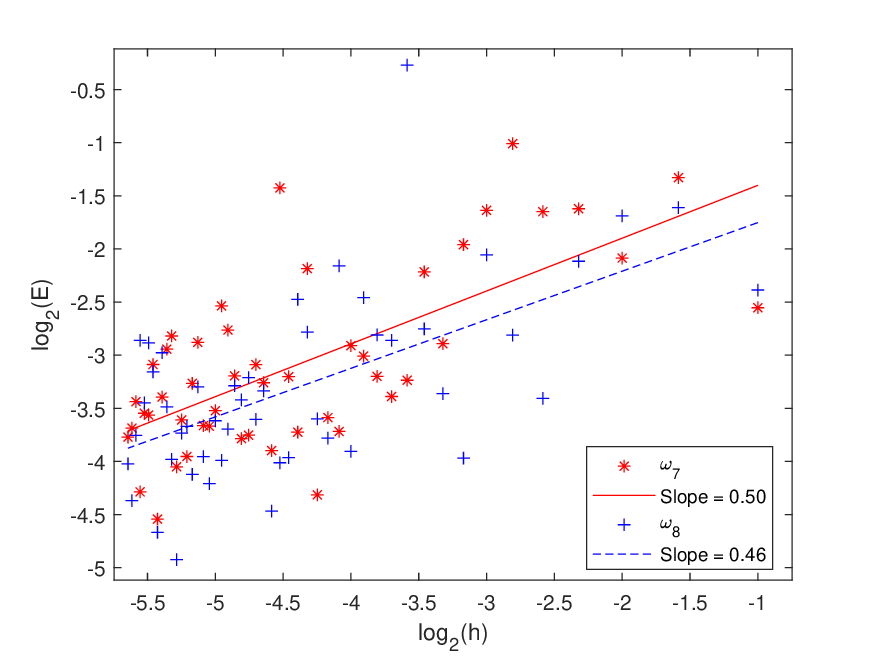}
    
    \includegraphics[width=0.5\linewidth]{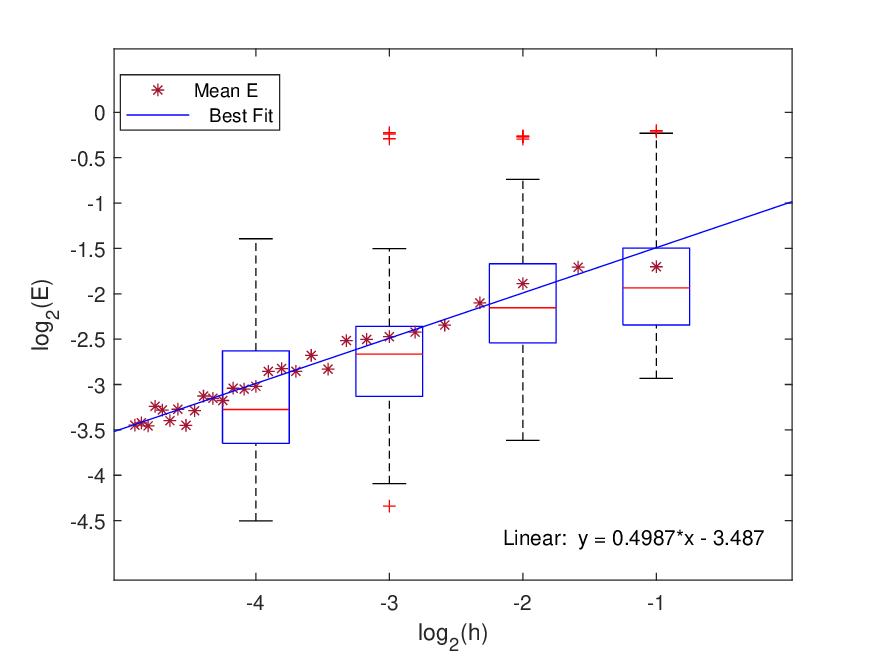}
    \includegraphics[height=0.25\textheight, width=0.5\linewidth]{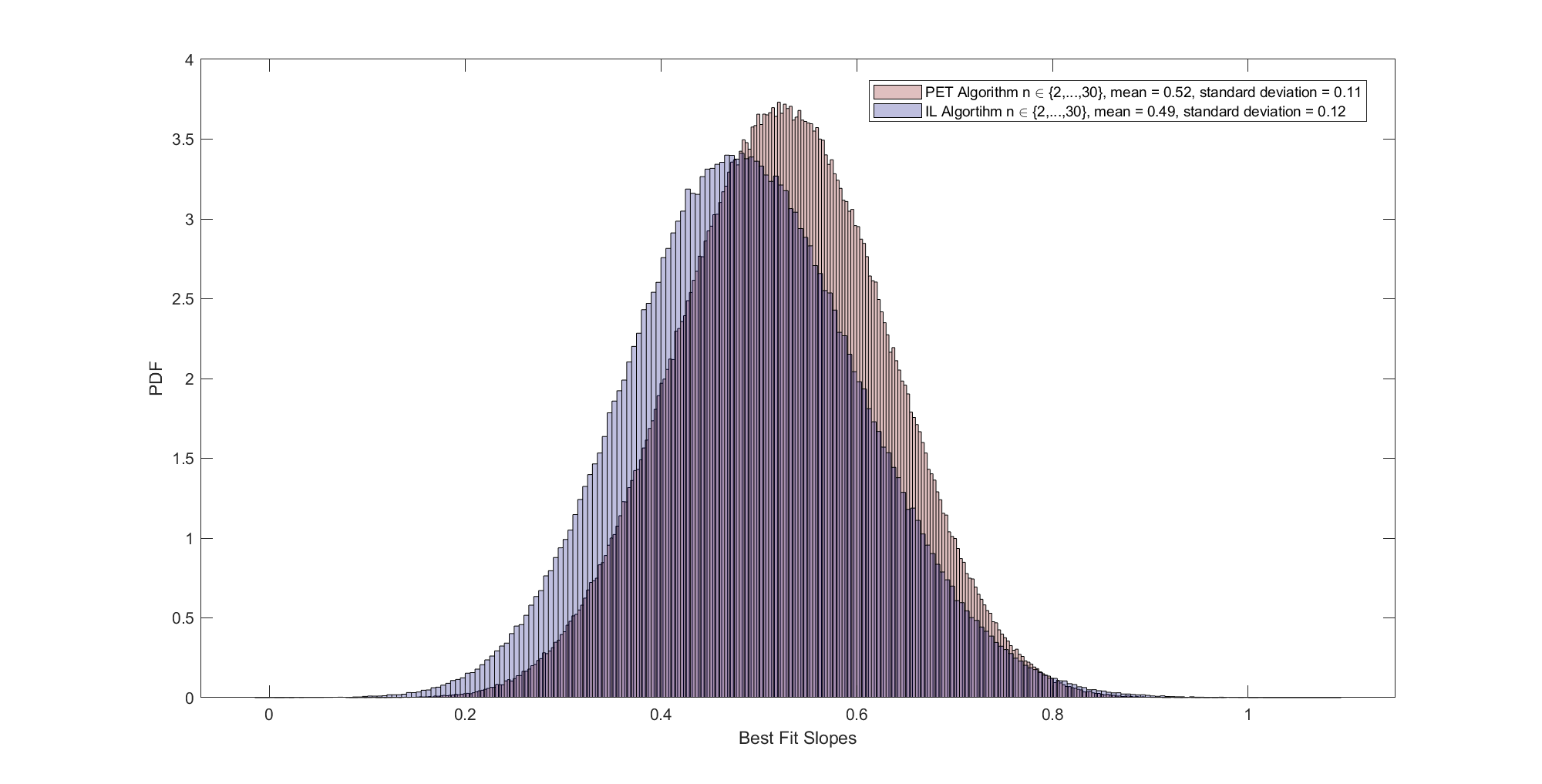}

    \caption{We that more less the trends observed in Figure \ref{fig:experiments} continue. The bottom right panel offers a comparison between the PET and IL algorithm. They produce roughly the same histogram of slopes for when $h \in \{1/n\}_{n=2}^{30}$ with both a mean near $1/2$ and roughly the same variability, although they are certainly not exact}
    \label{fig:alg_II_strong}
\end{figure}

\section{Conclusion}
We have introduced a general model that is both physically realistic, tunable, and easily computable. For example, we allow for rates which may be $0$. As shown in Example \ref{Ex:Switching Channels} and \ref{Ex:macro_density} in Section \ref{SS:Examples} this means we can incorporate randomly spaced or missing ion channels or more generally channels which are distributed according to some macroscopic density. We have elaborated on numerical methods for our model in Section \ref{S:simulation}. Our model may be use then to study the effects channel noise in combination with the effects of spatial heterogeneity in the ion channel distribution.

We have shown that the solutions of stochastic hybrid systems of the form given \eqref{eq:general_model V} and \eqref{eq:general_model Z} are well approximated in a strong sense by solutions of PDEs coupled to ODEs in the form of  \eqref{eqn_V_pde} and \eqref{eqn_Z_pde}. As far as we know, we are the first to prove a strong law of large numbers for such systems and the first to prove any law of large numbers result regarding point-wise 
 uniform convergence for $V$. Moreover, we are able to prove an upper bound on the rate of convergence and run numerical experiments to validate to some extent that our upper bound is correct at least as an upper bound.

Some downsides of our technique are that we require some smoothness on the functions $g_{i,j}$ but in reality the result should be true with only the Lipschitz assumption. We also require smoothness in the initial conditions where probably only continuity should suffice. Further, we have coupled convergence of $V$ with the convergence local averages $\overline{Z}$. By doing so, it may be that the rate of strong convergence we have obtained is not sharp for $V$. However, it is not impossible that the rate of strong convergence we have obtained for $V$ is sharp and that it simply differs from that of the weak convergence. This sharpness is a critical point, because if the rates do differ, it may suggest that path-wise behavior of the general model cannot be seen in an SPDE approximation. Pathwise information becomes more important in models which study random spacing of stochastic ion channels since the spacing is then seen as physically fixed. Our conjecture however, is that the rate of convergence of $V$ to $v$ is wrong, and that the numerical experiments in the left panel of Figure \ref{fig:experiments} are simply too noisy to disprove the rate of convergence is not $\sim h^{1/2}$ modulo some logarithmic factors.

\section*{Declarations}

\textbf{Conflicts of Interests:} All three authors declare there is no conflict of interest in this study. 

\noindent\textbf{Data Availability:} The numerical data used in this study was generated synthetically using the methods described in the paper. Due to its size and complexity, the raw data has not been made publicly available. However, all necessary details for reproducing the data, including the numerical methods and parameters, are provided in the manuscript. Researchers wishing to replicate the data may follow the outlined methods or contact the authors for further guidance.

\section*{Acknowledgements}
W.-T. Fan gratefully acknowledges the hospitality of the Center for Mathematical Biology at UPenn during his visit in Fall 2023, as well as the support of the National Science Foundation through grants DMS-2152103 and DMS-2348164.
Y.M. was supported by the NSF Materials Research Science and Engineering Center (DMR-2309034). Y.M. and J.M. were supported by the
Math+X Award (Proposal Number 234606) from the Simons Foundation. 

\bibliography{ion}

\section{Appendix}

\subsection{Appendix: Proof of Lemma \ref{Lemma_spatial_law}}

\begin{proof}[Proof of Lemma \ref{Lemma_spatial_law}]
For simplicity we write $\tau_{i}(t)=\sum_{k=1}^{n_i}\tau_{k,i}(t)$ and $Q_{i}(t)=\sum_{k=1}^{n_i}\mathcal{N}_{k,i}(\tau_{k,i}(t))$.
We will show that $\Gamma_{\omega,T}$ can be chosen to be $\Gamma_{\omega,T}=\sup_{i \leq i_{\omega,T}}\sup_{t \in [0,T]}|Q_{i}(t)-\tau_{i}(t)|$,  where $i_{\omega,T}$ is a random positive integer.

Let  $\widetilde{\mathcal{N}}_{k,i}$ be the compensated Poisson process corresponding to $\mathcal{N}_{k,i}$, and 
$\widetilde{Q}_{i}(t):=Q_{i}(t)-\tau_{i}(t)= \sum_{k=1}^{n_i} \widetilde{\mathcal{N}}_{k,i}(\tau_{k,i}(t))$
which is exactly the expression inside the absolute sign in \eqref{E:spatial_law}. By our assumption on $\{\tau_{k,i}\}$,  for each $i\geq 1$ and $T\in(0,\infty)$ we have 
\begin{equation}\label{E:BoundQ}
\sup_{t \in [0,T]}|\widetilde{Q}_{i}(t)|\leq \sup_{s \in [0,\tau_T]} \left|\sum_{k=1}^{n_i} \widetilde{\mathcal{N}}_{k,i}(s)\right| 
\end{equation}
and hence for any positive constants $\{\sigma_i\}$,
\begin{align*}
p_{i,T}:=\P\left(\sup_{t \in [0,T]}|\widetilde{Q}_{i}(t)|>3n_i\sigma_i\right)\leq &\,  \P\left(\sup_{s \in [0,\tau_T]} \left|\sum_{k=1}^{n_i} \widetilde{\mathcal{N}}_{k,i}(s) \right|>3n_i\sigma_i\right) \\
=&\,\P\left(\sup_{s \in [0,\tau_T]} \left| \widetilde{\mathcal{N}}(n_i s) \right|>3n_i\sigma_i\right),
\end{align*}
where $\widetilde{\mathcal{N}}$ is a unit rate compensated Poisson process. 

By Entemadi's inequality, 
\begin{equation}
\P\left(\sup_{s \in [0,\tau_T]}\left|\widetilde{\mathcal{N}}(n_i s)\right| \geq 3n_i\sigma_i\right) \leq 3\sup_{s \in [0,\tau_T]} \P\left(\left|\widetilde{\mathcal{N}}(n_i s)\right| \geq n_i\sigma_i \right).
\end{equation}
Hence, by the simple fact $ \P\left(\left|\widetilde{\mathcal{N}}(n_i s)\right| \geq n_i\sigma_i \right)\leq \P\left(\widetilde{\mathcal{N}}(n_i s)\geq n_i\sigma_i \right)+\P\left(-\widetilde{\mathcal{N}}(n_i s)\geq n_i\sigma_i \right)$ and  the Markov inequality, we obtain that for any positive constants and $\{\sigma_i\}$ and $\{y_i\}$,
\begin{align}
p_{i,T} \leq &\, 3\sup_{s \in [0,\tau_T]} \P\left(\left|\widetilde{\mathcal{N}}(n_i s)\right| \geq n_i\sigma_i \right)\\
\leq &\, 3 \sup_{s\in [0,\tau_T]}\left\{ e^{-y_in_i\sigma_i}\mathbb{E}\left[\exp\left(y_i \widetilde{\mathcal{N}}(n_i s) \right)+\exp\left(-y_i\widetilde{\mathcal{N}}(n_i s)\right)\right]\right\}
\end{align}

The generating function of the Poisson distribution gives $\mathbb{E}\left[\exp\left(y\widetilde{\mathcal{N}}(t\right)\right]=\exp\left(t(e^y-1-y)\right)$ for $y\in\R$ and $t\in\R_+$, and hence
$\mathbb{E}\left[\exp\left(y\widetilde{\mathcal{N}}(n_is\right)\right]\leq e^{ey^2 n_is/2}$ for $|y|\leq 1$, $s\in\R_+$ and $i\geq 1$. 
Now we take $y_i=n_i^{-1/2}$ and $\sigma_i=n_i^{-1/2}\log(n_i^{\gamma})$. The previous inequality and the last display give
\begin{equation} 
\sum_{i=1}^{\infty} p_{i,T} \leq 6e^{e\tau_T/2} \sum_{i=1}^{\infty }n_i^{-\gamma}<\infty.
\end{equation}
By the Borel-Cantelli Lemma, there exists an almost surely finite random index $i_{\omega,T}$ such that 
\begin{equation}
\sup_{t\in[0,T]}|\widetilde{Q}_{i}(t) |\leq 3\gamma n_i^{1/2}\log(n_i) \quad \text{for }i\geq i_{\omega,T}.
\end{equation}

Our claim in the first paragraph is established and the lemma is proved.
\end{proof}

\subsection{Appendix: Construction of the smooth bump function $\Phi_{h,p}\in \mathcal{C}(\mathbb{S};[0,1])$}

Although the following definition is technically complicated, all we desire is a smooth function which is one over an interval which contains approximately $h^{p-1}$ ion channels, zero over any interval containing the remaining ion channels, and transitions  between $0$ and $1$ over an interval which contains no ion channels.

For $x \in \mathbb{S}$, let 
\begin{equation}
\Phi_{h,p}(x):=\begin{cases}
1 & 1-[h^{p-1}/2]h \leq x \leq [h^{p-1}/2]h \\
\phi_h([h^{p-1}/2]h) -x)&  [h^{p-1}/2]h \leq x \leq ([h^{p-1}/2]+1)h 
\\
\phi_h(x+[h^{p-1}/2]h-1) & 1-([h^{p-1}/2]+1)h \leq x \leq 1-[h^{p-1}/2]h 
\\
0 & \text{Otherwise},
\end{cases}
\label{Def:Phi2}
\end{equation}
where  $[\cdot]$ denotes the integer part of a real number, and $\phi_h(x)$ is a function which transitions smoothly from $0$ to $1$ over the interval $[0,h]$. One way to define such function $\phi$ for $x \in [0,h] $ is to take 
\begin{align}
\phi_h(x)= \dfrac{e^{-h/x}}{e^{-1/(1-xh^{-1})}+e^{-h/x}}.
\end{align}
Then for $j \in \{0,1,2\}$, there exists a constant $C\in(0,\infty)$ such that  
\begin{align}
\left \| \dfrac{d^j}{dx^j}\phi_h \right \|_{L^\infty(\mathbb{S})} \leq Ch^{-j} \quad \text{for all  }h\in (0,1).
\label{eq regularity of phi}
\end{align}

\begin{figure}
    
    \caption*{Smooth Indicator Function}

        \hspace*{-.5cm}\includegraphics[width=\textwidth]{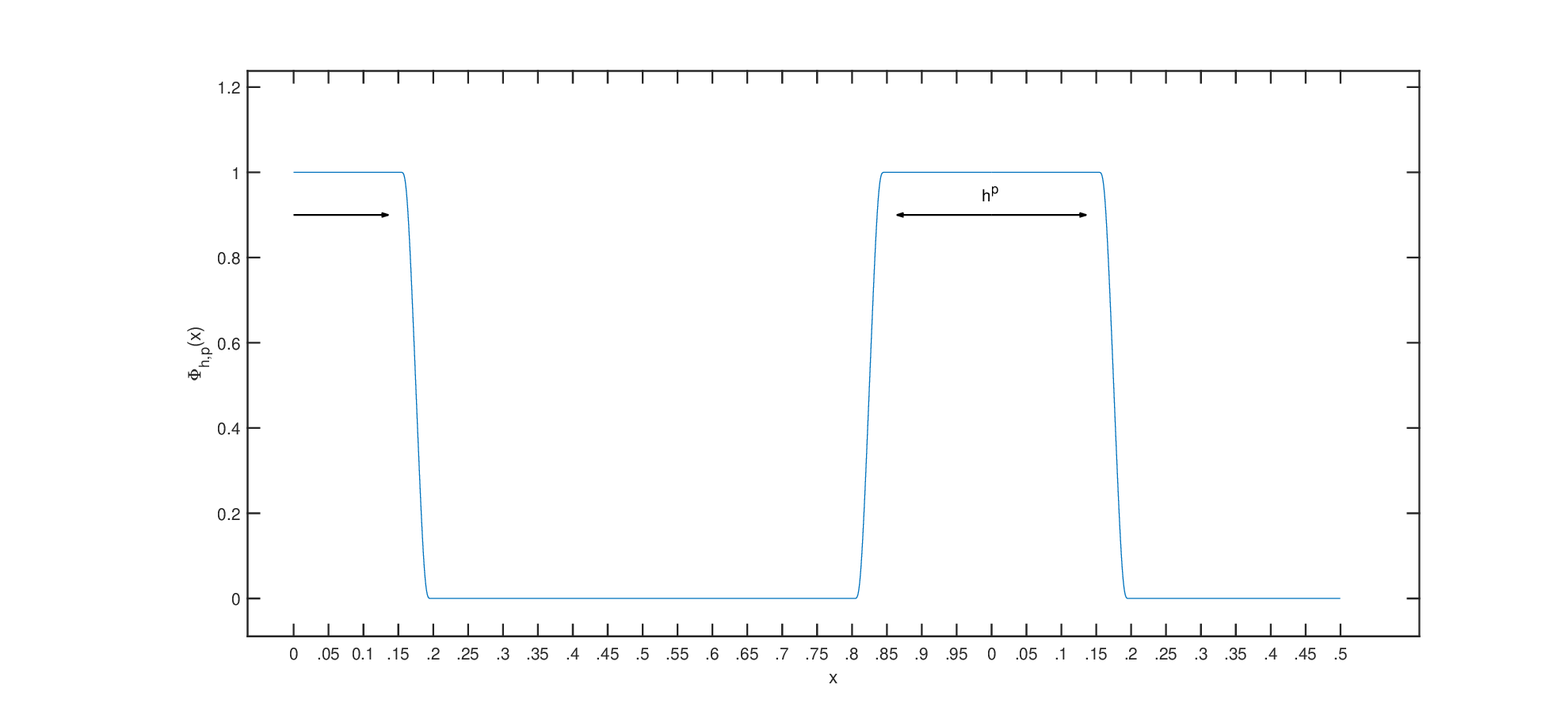}

    \caption{Here we depict $\Phi_{h,p}$ with $h=.05$ and $p=1/3$. For $1-[h^{1/3}/2]\leq x\leq [h^{1/3}/2], \, \, \Phi_{h,p}(x)=1$. Ion channels are located at tick-marks. Notice there are exactly $2[h^{p-1}/2]+1=7$ ion channels contained within the interval where $\Phi_{h,p}(x)=1$}

\end{figure}

\subsection{Appendix: Bounds for $\chi$}
\label{Ap:chi_bounds}
Recall the equation for which $\chi$ solves: \eqref{eqn_chi}. For simplicity we take $D=1.$ Here we justify both inequalities in \eqref{inq_bound_chi} as well as the one in \eqref{eqn_jump_amt}. Recall the definitions of $\overline{Z}$ and $N=N_{h,p}$ given by \eqref{Def:localspatial_Z}, \eqref{E: Size of N}, and \eqref{Def:discretizedLA}. Also recall we identify $n$ with $0$ because periodic boundary conditions imposed. \textit{As such, arithmetic of indices should be regarded as modular arithmetic with modulus $n$.}
For $m,k \in \{0,1,2,\ldots,n-1\},$ let us define $\hat{\delta}_m \in \mathbb{R}^{n}$ as 
\begin{align}
    \hat{\delta}_m^{(k)}:=
    \begin{cases}
    1-1/N & k=m \\
    -1/N &  1<|k-m|_n \leq (N-1)/2 \\
    0 & |k-m|_n >(N-1)/2
    \end {cases},
    \label{Def:hat_delta}
\end{align}
where 
$$|k-m|_n=|k-m| \mod{n}.$$
Note that $\hat{\delta}_m^{(k)}$ depends on $N$ and $n$, but these are not explicitly written in order to conserve subscripts and superscripts for indices. Then we can write $\overline{Z}$ as a convolution with $\hat{\delta}_m$ which is
\begin{align}
    \overline{Z}^{(k)}=\sum_{m=0}^{n-1}Z^{(m)}\hat{\delta}_m^{(k)}.
\end{align}
Thus we should try to solve for $\nu_m$ in the equation 
\begin{align}
\nu_m^{(k+1)}-2\nu_m^{(k)}+\nu_m^{(k-1)}=\hat{\delta}_m^{(k)}. 
\end{align}
Once we have done so, we see that 
\begin{align}
    \chi^{(k)}=\sum_{m=0}^{n-1}Z^{(m)}\nu^{(k)}_m 
    \label{E:chi_formula}
\end{align}
is a solution to \eqref{eqn_chi}. Therefore, using that for all $m$ and $t$, $Z^{(m)} \in \{0,1\}$
\begin{align}
    \sup_{t \in \mathbb{R}^+}\max_{k \in \{0,1,2,\ldots,n-1\}}|\chi_t^{(k)}|\leq\max_{k \in \{0,1,2,\ldots,n-1\}}\sum_{m=0}^{n-1}|\nu^{(k)}_m|.
    \label{Inq:chiyoung}
\end{align}
To solve for $\nu_m$, we split $\hat{\delta}_m$ into $\hat{\delta}_m=\delta_m-\bar{\delta}_m$. Here $\delta_m$ is the usual Kronecker symbol, and $\bar{\delta}_m$ is  the local average version $\delta_m$. Specifically
\begin{align}
    \bar{\delta}_m^{(k)}:=
    \begin{cases}
    1/N &  |k-m|_n \leq (N-1)/2 \\
    0 & |k-m|_n >(N-1)/2
    \end {cases}.
\end{align}
For each $k$, we have by definition of $N$ in \eqref{E: Size of N} and $\hat{\delta}_m$ in \eqref{Def:hat_delta} that
\begin{align}
\begin{aligned}
    \hat{\delta}_m^{(k)}&=\delta_m^{(k)}-\bar{\delta}^{(k)}_m
    \\
    &=\dfrac{1}{N}\sum_{j=-(N-1)/2}^{(N-1)/2}\left(\delta_m^{(k)}-\delta_m^{(k+j)}\right)
    \\
    &=\dfrac{1}{N}\sum_{j=1}^{(N-1)/2}\left(\delta_m^{(k)}-\delta_m^{(k-j)}\right)-\dfrac{1}{N}\sum_{j=1}^{(N-1)/2}\left(\delta_m^{(k+j)}-\delta_m^{(k)}\right)\\
    &=\dfrac{1}{N}\sum_{j=1}^{(N-1)/2}\sum_{l=1}^{j}\left(\delta_m^{(k-l+1)}-\delta_m^{(k-l)}\right)-\dfrac{1}{N}\sum_{j=1}^{(N-1)/2}\sum_{l=1}^{j}\left(\delta_m^{(k+l)}-\delta_m^{(k+l-1)}\right).
    \end{aligned}
    \label{E:delta_first}
\end{align}
To prevent equations from becoming too long, define 
\begin{align}
\mu^{(k)}_{m,j}:=\sum_{l=1}^{j}\delta_m^{(k+l)}.
\label{Def:nu_mu}
\end{align}
From \eqref{E:delta_first} we have that 
\begin{align}
\begin{aligned}
    \hat{\delta}_m^{(k)}&=\dfrac{1}{N}\sum_{j=1}^{(N-1)/2}\left(\mu_{m,j}^{(k-j)}-\mu_{m,j}^{(k-1-j)}\right)-\dfrac{1}{N}\sum_{j=1}^{(N-1)/2}\left(\mu_{m,j}^{(k)}-\mu^{(k-1)}_{m,j}\right)
    \\
    &=\dfrac{1}{N}\sum_{j=1}^{(N-1)/2}\left[\left(\mu_{m,j}^{(k-j)}-\mu_{m,j}^{(k-1-j)}\right)-\left(\mu_{m,j}^{(k)}-\mu^{(k-1)}_{m,j}\right)\right]
    \\
    &=\dfrac{1}{N}\sum_{j=1}^{(N-1)/2}\sum_{l=1}^{j}\left[\left(\mu_{m,j}^{(k-l)}-\mu_{m,j}^{(k-1-l)}\right)-\left(\mu_{m,j}^{(k-l+1)}-\mu_{m,j}^{(k-l)}\right)\right]
    \\&=-\dfrac{1}{N}\sum_{j=1}^{(N-1)/2}\sum_{l=1}^{j}\left(\mu_{m,j}^{(k+1-l)}-2\mu_{m,j}^{(k-l)}+\mu_{m,j}^{(k-1-l)}\right).
    \end{aligned} 
\end{align}
It follows from \eqref{Formula_for_nu} and the definition of $\mu$ in \eqref{Def:nu_mu} that
\begin{align}
    \nu_m^{(k)}=-\dfrac{1}{N}\sum_{j=1}^{(N-1)/2}\sum_{l=1}^{j}\mu_{m,j}^{(k-l)}=-\dfrac{1}{N}\sum_{j=1}^{(N-1)/2}\sum_{l=1}^{j}\sum_{i=1}^{j}\delta_m^{(k-l+i)},
    \label{Formula_for_nu}
\end{align}
 and thus 
 \begin{align}
 \begin{aligned}
 \max_{k \in \{0,1,2,\ldots, n-1\}}\sum_{m=0}^{n-1}\left|\nu_m^{(k)}\right|&=\sum_{m=0^{n-1}}\dfrac{1}{N}\sum_{j=1}^{(N-1)/2}\sum_{l=1}^{j}\sum_{i=1}^{j}\delta_m^{(k-l+i)}
\\
&=\dfrac{1}{N}\sum_{j=1}^{(N-1)/2}\sum_{l=1}^{j}\sum_{i=1}^{j}\delta_{k-l-j}^{(k-l+i)}
\\
&=\dfrac{1}{N}\sum_{j=1}^{(N-1)/2}j^2
\\
&=\dfrac{N^2-1}{24}.
\end{aligned}
\label{Bound_l1_nu}
\end{align}
Using \eqref{Inq:chiyoung} and \eqref{E: Size of N}, we establish the first inequality in \eqref{inq_bound_chi}. 

To establish the second, we take the discrete derivative of $\nu$. Using its formula in \eqref{Formula_for_nu}, we calculate 
\begin{align}
\begin{aligned}
    \nu_m^{(k+1)}-\nu_m^{(k)}&=-\dfrac{1}{N}\sum_{j=1}^{(N-1)/2}\sum_{l=1}^{j}\sum_{i=1}^{j}\left(\delta_m^{(k-l+i+1)}-\delta_m^{(k-l+i)}\right)
    \\ 
    =&-\dfrac{1}{N}\sum_{j=1}^{(N-1)/2}\sum_{l=1}^{j}\left(\delta_m^{(k-l+j+1)}-\delta_m^{(k-l+1)}\right).
    \end{aligned}
\end{align}
In this case we have that 
\begin{align}
\begin{aligned}
   \max_{k \in \{0,1,2,\ldots, n-1\}} \sum_{m=0}^{n-1}\left|\nu_m^{(k+1)}-\nu_m^{(k)}\right|&\leq \dfrac{1}{N}\sum_{j=1}^{(N-1)/2}\sum_{l=1}^{j}\left(\delta_{k-l+j+1}^{(k-l+j+1)}+\delta_{k-l+1}^{(k-l+1)}\right) 
    \\
    &= \dfrac{1}{N}\sum_{j=1}^{(N-1)/2}2j
    \\
    &=\dfrac{N^2-1}{4N},
    \end{aligned}
    \label{bound_l1_difnu}
\end{align}
and so the second inequality in \eqref{inq_bound_chi} is readily established in an analogous manner to the first.

Finally, we establish \eqref{eqn_jump_amt}. Using \eqref{E:chi_formula} and that almost surely there is exactly one $l \in \{0,1,2,\ldots,n-1\}$ for which $Z_t^{(l)}$ changes at time $t_i$ from $1$ to $0$ or $0$ to $1$, we obtain
\begin{equation}
    \left(\chi_t^{(k)}-\lim_{t\to t_i^-}\chi_t^{(k)}\right) =\sum_{m=0}^{n-1}\left(Z^{(m)}_t-\lim_{t\to t_i^-}Z^{(m)}_t\right)\hat{\delta}^{(k)}_m=\pm\sum_{m=0}^{n-1}\delta^{(m)}_{l}\nu^{(k)}_m=\pm\nu_l^{(k)}.
\end{equation}
We find from \eqref{Formula_for_nu} that
\begin{equation}
    \max_{k \in \{0,1,2\ldots,n-1\}}\left|\nu^{(k)}_m\right| \leq \dfrac{N^2-1}{8N}.
    \label{bound_max_nu}
\end{equation}
The bound in \eqref{eqn_jump_amt} is thus readily established using \eqref{E: Size of N}.

\end{document}